\def\A{\mathbb A}
\def\R{\mathbb R}
\def\N{\mathbb N}
\def\Cl{\mathrm{Cl}(\Om;\R^n)}
\def\cal{\mathcal}
\def\F{{\cal F}}
\def\G{{\cal G}}
\def\H{{\cal H}}
\def\M{{\cal M}}
\def\L{{\cal L}}
\def\a{\alpha}
\def\e{\varepsilon}
\def\G{\Gamma}
\def\Om{\Omega}
\def\ov{\overline}
\lbrace\begin{array}{@{}l@{}}}%
\def\pa{\partial}
\def\trace{{\rm tr}\,}
\def\Id{{\rm Id}\,}
\def\Div{{\rm div}\,}
\def\d{\, \mathrm{d}}
\def\dist{{\rm dist}}
\def\ca{\mathbbmss{1}}
\def\pared{\partial^{*}}
\def\00{{\bf 0}}
\def\dive{{\rm div}}
\def\k{\kappa}
\DeclareMathOperator*{\Glim}{\Gamma-\lim}
\newtheorem{theorem}{Theorem}[section]
\newtheorem{corollary}[theorem]{Corollary}
\newtheorem{proposition}[theorem]{Proposition}
\newtheorem{lemma}[theorem]{Lemma}
\newtheorem{ambient}[theorem]{Assumption}
\theoremstyle{definition}
\newtheorem{remark}[theorem]{Remark}
\numberwithin{equation}{section}
\numberwithin{figure}{section}
\title{Damage-driven fracture with  low-order potentials: asymptotic behavior, existence and applications}
\author{Marco Caroccia}
\author{Nicolas Van Goethem}
\email{mcaroccia@fc.ul.pt, nvgoethem@fc.ul.pt}
\address[]{Universidade de Lisboa, Faculdade de Ci\^{e}ncias,
Departamento de Matem\'{a}tica, CMAF+CIO, Alameda da Universidade, C6, 1749-016 Lisboa, Portugal}
\keywords{special function of bounded deformation, fracture, free discontinuity problem, $\Gamma$-convergence, phase-field approximation, geometric measure theory}
\begin{document}
     \maketitle
\tableofcontents
\begin{abstract}
We study the $\Gamma$-convergence of  damage to  fracture  energy functionals in the presence of low-order nonlinear potentials  that allows us  to model physical phenomena such as fluid-driven fracturing, plastic slip, and the satisfaction of kinematical constraints such as crack  non-interpenetration. Existence results are also addressed.
\end{abstract}
\section{Introduction}
In linearized elasticity,  the simplest model of damage-driven brittle fracture assumes that a scalar $0\leq v\leq1$ multiplies the elasticity tensor, that is thus weakened in the damage region. At the same time, following Griffith-Bourdin-Francfort-Marigo approach \cite{FM1,FM2,FM3},  a certain amount of energy is dissipated in the damage region, and one seeks the minimum of the total energy consisting of the sum of the elastic stored energy and the dissipation terms. Specifically, in \cite{AJVG} the following damage-dependent energy functional was considered\footnote{Here we discard on purpose the work of the external forces.}: 
\begin{equation}
  J_\e(u,v):=\displaystyle \int_{\Omega}  v \A e(u)\cdot e(u)dx +\frac{1}{\e}\displaystyle\int_{\Omega} \psi(v) dx,\label{Jeps}  
\end{equation}
with $\psi(v)=k$ in the damage region $\omega\subset\Omega$, zero elsewhere, $k$ a material-dependent damage coefficient,  $v\geq \alpha\e$ with $\alpha>0$, and where $\e$ represent the thickness of the damaged region, also related to the mesh size. Here $\mathbb A$ stands for one half the constant isotropic elasticity tensor. The numerical simulations done  in \cite{AJVG} have shown that model consistency under mesh refinement strongly depended on the ratio ${k}/{\e}$.
Indeed Eq. \eqref{Jeps} was used for numerical purposes as a phase-field approximation of the Griffith energy
\begin{align}
J_ G(u):=&	\int_{\Omega} \A e(u)\cdot e(u) \d x +k\H^{n-1}(J_u), \label{grif}
	\end{align}
yet without studying any rigorous convergence result as $\e\to0$.  In anti-plane elasticity, though, that is, with $\mathbb A$ one half the identity tensor, $e(u)$ replaced by $\nabla u$ where $u$ is the vertical component of the displacement field, it is well-known that \eqref{grif} is approximated in the sense of $\Gamma$-convergence by the Ambrosio-Tortorelli functional 
\begin{equation}
  AT_\e(u,v):=\displaystyle \int_{\Omega}  \left((v+\eta_\e)|\nabla u|^2+\frac{(1-v)^2}{\e}+\e|\nabla v|^2\right) dx,\label{AT}  
\end{equation}
where it is crucial for the residual damage to be of order $\eta_\e=o(\e)$. A general case study in function of this parameter $\eta_\e$ with $\Gamma$-convergence results in the anti-plane case was carried out in \cite{iurlano2013fracture} as based on Ambrosio-Tortorelli approximation, whereas an approximation of the type \eqref{Jeps} had been considered for the scalar case, slightly earlier  by the same authors in \cite{MasoCPAM2013}. In real elasticity, that is, for the vectorial $u$ and its symmetric gradient $e(u)$ (as well as in $n$-dimensions), the first significant $\Gamma$-convergence convergence result is found in \cite{focardi2014asymptotic}, with an Ambrosio-Tortorelli-like approximation. Recently, existence results for the original Griffith's functional have been provided in $2D$ passing by Korn-type inequalities in GSBD space \cite{FRIEDRICH201827,CFI2017} (see also \cite{CCI2017} and \cite{CCF2016}). In \cite{CC2017,CC2018} the authors manage to get rid of any artificial integrability condition on the displacement field by carefully approximating the singularities, and prove some existence results by $\Gamma$-convergence with the topology of measures.

In the present paper, with the topology of $L^1$, we are concerned with approximations as based on functionals of the type \eqref{Jeps}. Indeed, it is closer to the numerical method chosen for simulation of damage-driven fracture, in particular as far as  topological sensitivity analysis is performed, already in \cite{AJVG} and more recently in \cite{XavierEFM2017}. In particular, we stick to a simple first-order damage energy, i.e., without gradients of $v$ in the energy functional (see \cite{AVG2012} for other gradient-free approximations in other contexts). Indeed, in a recent work \cite{xavier2017topological}, a simple fracking model based on damage and fluid-driven fracture and the topological derivative concept is proposed. It consists of numerical simulations based on the minimization of an energy functional of the type
\begin{equation}
  F_\e(u,v):= J_\e(u,v) -p\int_{\Omega} \psi(v)\Div u  dx,\label{HFeps}  
\end{equation}
that models a crack filled with a fluid with an imposed hydrostatic pressure $p$ which is quasi-statically increased in order to trigger a crack opening. As a generalization of this problem, our main goal is to study the asymptotic behaviour, in $\e$, of general functionals with low-order potential of the form
\begin{equation}
  \F_\e(u,v):= J_\e(u,v) +\int_{\Omega} F(x,e(u),v) dx,\label{Feps}  
\end{equation}
where $F$ need not be positive. In particular, fracking is recovered for $F=-p\psi(v)\trace{e(u)}$, but it happens that other interesting cases can be studied as for instance (i) hydraulic fracture in porous media, (ii) plastic slip, (iii) non-interpenetration or Tresca-type conditions, just to cite  some applications that we have chosen. Our main result is the $\Gamma$-convergence of $\F_\e(u,v)$ to the limit functional
\begin{align*}
	\Phi(u):=&	\int_{\Omega} \A e(u)\cdot e(u) \d x +b \H^{n-1}(J_u)  \\
	&+a \int_{J_u} \sqrt{\A ([u](z)\odot \nu(z)) \cdot ([u](z)\odot \nu(z)) } \d \H^{n-1}(z)\\
& + \int_{\Omega} F(x,u,1)+ \int_{J_u} F_{\infty}(z,[u]\odot \nu)  \d \H^{n-1}(z),
	\end{align*}
for some appropriate coefficients $a$ and $b$ related to the choice of the damage potential $\psi$ and with $F_\infty$ denoting the recession function of the convex potential, i.e., coding the asymptotic behaviour of $F$ as $|e(u)|\to\infty$. Compactness and an original approach to existence results are also proposed in Section \ref{CER}, as well as some general results given in the Appendix. 
Let us remark that a specific such low-order potential together with a treatment of the Dirichlet boundary condition were also considered in the anti-plane case in \cite{AmbrosioElast},  with the additional condition that $F\geq0$, a restriction that we wanted to avoid in the present work.

Moreover, our aim is also to be entirely self-contained, in order for these  computations and techniques be available for the mathematical/mechanical communities in the clearest way possible. Therefore, some known results are recalled and proven in our Appendix. Precise bibliography is always provided when cross-references applies, while otherwise our arguments and proof strategy are originals. Specific references for this topic are \cite{MasoCPAM2013} and \cite{iurlano2013fracture} while general and fundamental results are found in \cite{temam1980functions,buttazzo1989semicontinuity,AT,FonsecaMuellerBLOWUP,DM,AmbrosioCorsoSNS,ACDM,BCDM,Ch1}.

\section{Notations and preliminaries}

We denote by $M^{n\times n }_{sym} $ the set of all symmetric matrices with real coefficient. Given an open bounded set $\Omega$ with Lipschitz boundary we say that a function $u\in L^1(\Omega;\R^n)$ is a function of bounded deformation if there exists a matrix-valued Radon measure $ ( (Eu)_{ij})_{i,j=1}^n$ such that for all $\varphi\in C^{\infty}_c(\Omega;\R^n),$ $\|\varphi\|_{\infty}\leq 1$ it holds
	\[
	\sum_{j=1}^n\int_{\Omega} (e(\varphi))_{ij}  u_j \d x = -\sum_{j=1}^n \int_{\Omega} \varphi_j(x)  \d (Eu)_{ij}(x) \ \ \text{for all $i=1,\ldots,n$}
	\]
where $e(\varphi):=\frac{(\nabla \varphi + \nabla^T \varphi )}{2}$ denotes the symmetric part of the gradient. Notice that, if $u_k\in BD(\Omega)$ and $u_k\rightarrow u$ in $L^1$, then $Eu_k \rightharpoonup^*Eu$. 

Analogously to the behavior of the function of Bounded Variation we can identify three distinct part of the matrix valued measure $Eu$: the absolutely continuous part, the jump part (supported on $J_u$, a $(n-1)$-rectifiable set) and a Cantor part. Namely, for a generic $u\in BD(\Omega;\R^n)$, we can write
	\[
	Eu=e(u) \L^n + [u] \odot \nu_u \H^{n-1}\llcorner J_u + E^cu
	\]
where $\nu_u(x)$ is any unitary vector field orthogonal to $J_u$, $[u] = u^+-u^-$ the jump of $u$ with $u^\pm$ the approximate limit of $u$ as we approach $J_u$ and 
	\[
	[u]\odot \nu_u:=\frac{ [u]\otimes \nu_u + \nu_u \otimes [u]}{2}.
	\]
Note that in general symbol $\odot$ stands for the symmetric sum. Finally we define the space $SBD^2(\Omega;\R^n)$ as follows:
	\[
	SBD^2(\Omega;\R^n):=\{u\in BD(\Omega;\R^n) \ | \ E^c u=0,\  e(u)\in L^2(\Omega; M^{n\times n}_{sym}), \ \H^{n-1}(J_u)<+\infty\}.
	\]

\subsection{Settings of the problem}\label{sct:settings}
We consider a fourth order tensor $\A: M^{n\times n }_{sym}   \rightarrow M^{n\times n }_{sym}  $ such that there exist a constant $\kappa$ for which
	\[
	\kappa^{-1} |M|^2\leq \A M \cdot M \leq \kappa |M|^2
	\] 
where $M\cdot L:=\text{tr}(ML^T)$ is the standard scalar product inducing the Frobenius norm which, for a generic $M\in M_{sym}^{n\times n }$, is here denoted by $|M|$. \\

Having fixed $\alpha>0$ we define
	\[
	V_{\e}:= \{v\in W^{1,\infty}(\Omega) \ | \ \e\alpha < v \leq 1, \ |\nabla v_{\e}|\leq 1/\e\}.
	\]
On this space we define the sequence of energy functionals
\begin{equation}\label{eqn:energy e}
\F_{\e}(u,v):=
\left\{
\begin{array}{ll}
\begin{array}{rl}
\displaystyle \int_{\Omega}  v \A e(u)\cdot e(u) +&\displaystyle\frac{1}{\e}\displaystyle\int_{\Omega} \psi(v) d x  \\
&+ \displaystyle\int_{\Omega}F(x,e(u),v) \d x \ \ 
\end{array}  & \ \  \text{if $(u,v) \in H^1(\Omega;\R^n) \times V_{\e}$}\\
\text{}& \\
+\infty \ \ &\ \ \text{otherwise}
\end{array}
\right.
\end{equation}
where $\psi$ is any decreasing, convex function such that $\psi(1)=0$ and $F$ is a generic potential subject to the following hypothesis. 
\begin{ambient}[On the potential $F$]\label{hyp on F}
The function $F:\R^n \times M_{sym}^{n\times n}\times [0,1]\rightarrow \R$ satisfies the following properties:
\begin{itemize}
\item[1)] $F(\cdot , M  ,0) $ is continuous for all $M\in M_{sym}^{n\times n}$;
\item[2)] $F(x,\cdot,0)$ and $F(x,\cdot ,1)$ are convex for all $x\in \Omega$;
\item[3)] $ -\sigma |M|	\leq F(x,M,v)\leq \ell |M|$, for all $(x,M,v)\in \R^n \times M_{sym}^{n\times n}\times [0,1]$ where $\ell>0$ can be any real constant and 
	\begin{equation}\label{bound on p}
	0<\sigma< \max_{\lambda\in (0,1)}\left\{ \frac{2\sqrt{\alpha \psi(\lambda)}}{\sqrt{\kappa}(1+2\sqrt{\alpha |\Omega|\psi(\lambda)/\lambda})}\right\}   < 2\sqrt{\frac{\alpha \psi(0)}{ \kappa }};
	\end{equation}

\item[4)] having set 
\begin{align*}
\omega_F(s;1)&:=\sup\left\{ \frac{|F(x,M,s)-F(x,M,1)|}{|M|} \ : \ (x,M)\in \R^n\times M_{sym}^{n\times n}\right\},\\
\omega_F(s;0)&:=\sup\left\{ \frac{|F(x,M,s)-F(x,M,0)|}{|M|} \ : \ (x,M)\in \R^n\times M_{sym}^{n\times n}\right\},
\end{align*}
then 
	\[
	\displaystyle\lim_{s\rightarrow 1} \omega_F(s;1)=\lim_{s\rightarrow 0} \omega_F(s;0)=0.
	\]
\end{itemize}
\end{ambient}
\begin{remark}
In particular, $F$ can be taken as negative as we want by simply taking $\alpha \psi(0)$ large enough. 
\end{remark}
\begin{remark}\label{rmk: Lipschitz function}
We remark  that, for any fixed $x$, since $f(M)=F(x,M,0)$ is convex and satisfies $f(M)\leq \ell |M|$, then $f$ is a Lipschitz function with constant $\ell$. Indeed, consider a convex function $f:\R^n\rightarrow \R$ (with $n>1$) such that $f(x)\leq \ell |x| $ and notice that, for any $v\in S^{n-1}$, $g(t):=f(x+tv)$ is still convex and meets the requirement $g(t)\leq \ell |x+tv|$.  In particular
$\lim_{t\rightarrow +\infty} \frac{g(t)-g(0)}{t}\leq \ell$ 
and since the map $t\mapsto \frac{g(t)-g(0)}{t}$ is increasing we get
$ \frac{g(t)-g(0)}{t}\leq \ell $
for all $t\in \R$, leading to $g'(0)\leq \ell$ and thus to
	\[
	\nabla f(x) \cdot v \leq \ell \ \ \ \text{for all $v\in S^{n-1}$} \ \ \ \ \Rightarrow \ \ \ \ |\nabla f(x) |\leq \ell.
	\]
\end{remark}
%\begin{remark}
%\rm{
%Notice that, with assumption 1) in mind, we can deduce that, on the set $\Omega^{\lambda}=\{v\leq \lambda\}$ it holds
%	\begin{align*}
%	v \mathbb{A}e(u)\cdot e(u) +\frac{1}{\e}\psi(v) + F(x,e(u),v) &\geq 2\sqrt{\alpha \psi(\lambda) \mathbb{A}e(u)\cdot e(u)} + F(x,e(u),v)\\
%	&\geq \left(2\sqrt{\frac{\alpha \psi(\lambda)}{\kappa}}-\sigma\right) |e(u)|.
%	\end{align*}
%In particular, since $\sigma<2\sqrt{\frac{\alpha \psi(\lambda)}{\kappa}}$ and $\psi$ is decreasing, we can find a $\lambda_0>0$ such that 
%	\[
%	\sigma<2\sqrt{\frac{\alpha \psi(\lambda)}{\kappa}} \ \ \ \text{for all $\lambda\in (0,\lambda_0)$}.
%	\]
%Thus we can infer that there exists a $\lambda_0>0$ such that 
%}
%\end{remark}
We are interested in the asymptotic behavior (as $\e\to0$) of the sequence of energies \eqref{eqn:energy e}.  In particular the first aim of this paper is to show that the family of functional $\F_{\e}$, under the assumptions in \ref{hyp on F}, is $\G$-converging to the energy
	\begin{align*}
	\Phi(u):=&	\int_{\Omega} \A e(u)\cdot e(u) \d x + \int_{\Omega} F(x,u,1)  \d x\\
	&+a \int_{J_u} \sqrt{\A ([u](z)\odot \nu(z)) \cdot ([u](z)\odot \nu(z)) } \d \H^{n-1}(z)\\
&+b \H^{n-1}(J_u) + \int_{J_u} F_{\infty}(z,[u]\odot \nu)  \d \H^{n-1}(z)
	\end{align*}
defined for $u\in SBD^2(\Omega;\R^n)$ and extended to $+\infty$ otherwise. Here we have set, for the sake of shortness,
	\begin{align*}
	 &a=2\sqrt{\alpha \psi(0)}, &b=2\int_0^1 \psi(t) \d t 
	\end{align*}
 and
	\[
	F_{\infty}(z,M):=\lim_{t\rightarrow +\infty} \frac{F(z,tM,0) - F(z,0,0)}{t} \ \ \ \ \ \ \text{for $z\in J_u$ and $M\in M^{n\times n}_{sym}$}
	\]
(see Proposition \ref{propo: justification of limit} to see why $F_{\infty}$ is well defined for potential $F$ satisfying assumptions \ref{hyp on F} ). \\
\begin{remark}\label{rmk: on alpha}
Notice that the role of the condition $\alpha>0$ is linked, at least in the present analysis, to the possibility for $F$ to be negative. The approach here proposed seems to work also if we replace the condition $v_{\e}\geq \alpha\e$ with the condition $v_{\e}\geq \eta_{\e}$ for an $\eta_{\e}$ such that $\eta_{\e}/\e\rightarrow 0$, provided $F\geq 0$. 
%is crucial in order for our approach to work on $SBD$. Indeed, even if no constant depending on $\alpha$ is present in front of $F_{\infty}$ in the energy $\Phi$, the condition $\alpha>0$ is somehow hidden in the hypothesis 3) on $F$ in \ref{hyp on F}. Namely, by choosing $\alpha=0$ we are forcing also the choice $F=0$ and thus we are killing any kind of bound we can have on $[u]$. As a consequence, the natural framework for such an analysis would be $GSBD$ instead of the more regular $SBD$. An additional problem may arise from the fact that the condition \eqref{bound on p} (strongly depending on $\alpha>0$) ensures (see Proposition \ref{prop: bnd energy AT}) that 
  %  \[
  %  \F_{\e}(u,v) \approx C \left(\int_{\Omega} \mathbb{A}e(u)\cdot e(u) % \frac{\psi(v)}{\e}\d x\right)
  %  \]
%for a universal constant $C$. This is equivalent to say that no significant cancellation between the (possibly negative) term $\int_{\Omega}F(x,e(u),v)\d x$ and $ \int_{\Omega} \mathbb{A}e(u)\cdot e(u)\d x$ can arise. In particular, the analysis of the case $\alpha=0$ would also force us to consider this scenario in which, in the light of such possible cancellations, it is not clear that the $\Gamma$-convergence of $\F_{\e}$ to $\F$ stated in Theorem \ref{thm: main thm g conv} is still guaranteed. 
 \end{remark}
\subsection{Main Theorems}
Setting
	\begin{equation}\label{eqn: g limit}
	\mathcal{F}(u,v):=\left\{
	\begin{array}{ll}
	\Phi(u), & \ \begin{array}{l}
	\text{if $u\in SBD^2(\Omega)$}\\
	\text{and $v=1$ $\L^n$-a.e. in $\Omega$},
\end{array}\\
& \text{}\\
	+\infty & \ \text{otherwise}
	\end{array}
	\right.
	\end{equation}
we are able to provide the following $\Gamma$-convergence result:
\begin{theorem}\label{thm: main thm g conv}
Provided the notations and the assumptions introduced in Subsection \ref{sct:settings} we have
	\[
	\Glim_{\e\rightarrow 0} \F_{\e}= \F \ 
	\]
on the space $ H^1(\Omega;\R^n) \cap L^{\infty}(\Omega) \times V_{\e}\subset SBD^2(\Omega)\cap L^{\infty}(\Omega;\R^n) \times L^1(\Omega)$ with respect to the convergence induced by the $L^1$ topology. In particular, the following assertions hold true:
\begin{itemize}
\item[a)] For any $(u_{\e},v_{\e})\in H^1(\Omega;\R^n) \times V_{\e}$ such that $u_{\e}\rightarrow u$, $v_{\e}\rightarrow v$ in $L^1$ we have
	\[
	\liminf_{\e\rightarrow 0} \F_{\e}(u_{\e},v_{\e}) \geq \F(u,v);
	\]
\item[b)] Let $\{\e_j\}_{j\in \N}$ be a vanishing sequence. Then for any $u\in SBD^2(\Omega)\cap L^{\infty}(\Omega;\R^n)$ there exists a subsequence $\{\e_{j_k}\}_{k\in \N}\subset \{\e_j\}_{j\in \N}$ and $(u_{k},v_{k})\in H^1(\Omega;\R^n)\times V_{\e_{j_k}}$ such that
	\[
	u_{k}\rightarrow u, \ \ \ v_{k} \rightarrow 1 \ \ \ \ \text{in $ L^2$, and} \ \ \  
	\lim_{k\rightarrow +\infty} \F_{\e_{j_k}}(u_{k},v_{k}) = \F(u,1).
	\]
\end{itemize}
	\end{theorem}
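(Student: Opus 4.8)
The plan is the classical two-halves strategy: the $\liminf$ inequality (a) and the construction of a recovery sequence (b). For (a), I would first suppose $\liminf_\e\F_\e(u_\e,v_\e)<+\infty$ (otherwise nothing is to prove) and pass to a subsequence along which $\F_\e(u_\e,v_\e)$ converges and is bounded by some constant $C$. The first task is to absorb the possibly negative potential: by hypothesis~3) (note $F(x,0,0)=0$) one has $\F_\e(u_\e,v_\e)\geq J_\e(u_\e,v_\e)-\sigma\int_\Omega|e(u_\e)|\,\d x$, and writing $|e(u_\e)|=\tfrac{1}{\sqrt{v_\e}}\sqrt{v_\e}|e(u_\e)|$, applying Young's inequality with a free parameter $t$, bounding $\int_\Omega v_\e|e(u_\e)|^2\leq\kappa\int_\Omega v_\e\A e(u_\e)\cdot e(u_\e)\leq\kappa J_\e$ and $\int_\Omega\tfrac{1}{v_\e}\leq\tfrac{|\Omega|}{\lambda}+\tfrac{1}{\alpha\psi(\lambda)}J_\e$ (the latter since $|\{v_\e\leq\lambda\}|\leq\e\,\psi(\lambda)^{-1}\tfrac1\e\int\psi(v_\e)\leq\e\,\psi(\lambda)^{-1}J_\e$), one obtains, after optimising $t$ and $\lambda$, exactly the threshold \eqref{bound on p} on $\sigma$ under which $\sup_\e J_\e(u_\e,v_\e)<+\infty$. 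From this: $\int\psi(v_\e)\to0$, hence $v_\e\to1$ in measure and $v=1$ a.e.; $|Eu_\e|(\Omega)=\int|e(u_\e)|$ is bounded, so $u\in BD(\Omega)$ and $Eu_\e\rightharpoonup^*Eu$; and, invoking a compactness result of Ambrosio--Dal Maso--Iurlano type for sequences with bounded $J_\e$ (such as the ones recalled in the Appendix), $u\in SBD^2(\Omega)$, so that $\F(u,v)=\Phi(u)<+\infty$ and the statement is not vacuous.

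The core of (a) is the energy inequality $\liminf_\e\F_\e(u_\e,v_\e)\geq\Phi(u)$, which I would prove by the blow-up method: localise, setting $\mu_\e:=\F_\e(u_\e,v_\e;\cdot)$ (the energy as a set function on open subsets of $\Omega$), extract a weak-$*$ limit $\mu\geq\mu_a\L^n+\mu_s\H^{n-1}\llcorner J_u$, and bound the Radon--Nikod\'ym densities separately. At $\L^n$-a.e.\ $x_0$, rescaling gives $\mu_a(x_0)\geq\A e(u)(x_0)\cdot e(u)(x_0)+F(x_0,e(u)(x_0),1)$, using $v_\e\to1$, weak $L^2$ convergence of $\sqrt{v_\e}\,e(u_\e)$ towards $e(u)$, convexity of $F(x,\cdot,1)$, and hypothesis~4) (with $\omega_F(\cdot;1)\to0$) to replace $F(x,\cdot,v_\e)$ by $F(x,\cdot,1)$. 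At $\H^{n-1}$-a.e.\ $z\in J_u$, rescaling at scale $\e$ around $z$ and slicing in direction $\nu(z)$ reduces matters to a one-dimensional optimal-profile problem whose value, attained by letting $v$ descend to its floor of order $\e$ over a layer of optimal width of order $\e$ and concentrating the jump $[u](z)$ there, is precisely $a\sqrt{\A([u](z)\odot\nu(z))\cdot([u](z)\odot\nu(z))}+b+F_\infty(z,[u](z)\odot\nu(z))$: the lower bound $v\geq\alpha\e$ with $\alpha>0$ produces the finite coefficient $a=2\sqrt{\alpha\psi(0)}$ (were $v$ allowed to reach $0$, the jump would cost no elastic energy), the constraint $|\nabla v|\leq1/\e$ rescales to $|v'|\leq1$ and yields $b=2\int_0^1\psi$, and the recession term appears because in that layer $e(u_\e)\sim[u](z)\odot\nu/(\text{width})\to\infty$ while $v_\e\to0$, so $F(x,e(u_\e),v_\e)\approx F(x,e(u_\e),0)\approx(\text{width})^{-1}F_\infty(z,[u](z)\odot\nu)\cdot(\text{width})$ by positive $1$-homogeneity of $F_\infty$ and $F(z,0,0)=0$; hypothesis~4) ($\omega_F(\cdot;0)\to0$) and Proposition~\ref{propo: justification of limit} make this rigorous.

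For (b) I would argue by density and diagonalisation. Using known density results in $SBD^2\cap L^\infty$ (functions that are smooth off a polyhedral, or locally finite rectifiable, jump set) one finds $u^m\to u$ in $L^1$ with $\Phi(u^m)\to\Phi(u)$; this uses continuity of the surface integrand $M\mapsto a\sqrt{\A M\cdot M}+b+F_\infty(z,M)$ together with continuity of $F_\infty(\cdot,M)$, itself a consequence of hypothesis~1) and the uniform bound $-\sigma|M|\leq F_\infty\leq\ell|M|$. For each such $u^m$, smooth off a polyhedral $J_{u^m}$, I construct $(u_\e,v_\e)$ explicitly: away from an $\e$-neighbourhood of $J_{u^m}$ (contained in a vanishing tubular neighbourhood) set $v_\e\equiv1$ and $u_\e=u^m$ (slightly mollified to land in $H^1$); across $J_{u^m}$, in the normal variable $d=\dist(\cdot,J_{u^m})$, set $v_\e=\beta_\e(d)$ following the optimal one-dimensional profile --- constant, equal to the floor $\alpha\e$, on a core layer $\{|d|<\rho_z\e\}$ of optimal width depending on $[u^m](z)$, then increasing linearly to $1$ over a layer of width $\simeq\e$ --- while letting $u_\e$ interpolate the jump $[u^m](z)$ linearly across the core. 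One checks $(u_\e,v_\e)\in H^1(\Omega;\R^n)\times V_\e$ (the constraints $\alpha\e<v_\e\leq1$ and $|\nabla v_\e|\leq1/\e$ hold by construction), $u_\e\to u^m$ and $v_\e\to1$ in $L^2$ (the modified region has vanishing measure and $u_\e$ is bounded), and, by dominated convergence, $\F_\e(u_\e,v_\e)\to\Phi(u^m)$: the bulk contribution converges to $\int_\Omega\A e(u^m)\cdot e(u^m)+\int_\Omega F(x,e(u^m),1)$ and the tube contribution to $\int_{J_{u^m}}\big(a\sqrt{\A([u^m]\odot\nu)\cdot([u^m]\odot\nu)}+b+F_\infty(z,[u^m]\odot\nu)\big)\,\d\H^{n-1}$, the lower-dimensional junction set of the polyhedral $J_{u^m}$ being negligible. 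A diagonal argument over $m$ and the vanishing sequence $\{\e_j\}$, using the lower semicontinuity of the $\Gamma$-$\limsup$ functional, then yields the required subsequence $\{\e_{j_k}\}$ and recovery pair $(u_k,v_k)$.

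The main obstacle I anticipate is the surface part of the $\liminf$ inequality: making the blow-up and slicing arguments rigorous in the $SBD$ setting (where one slices the scalar traces $t\mapsto\langle u(y+t\xi),\xi\rangle$) and, simultaneously, controlling the non-sign-definite potential --- that is, showing its bulk excess is absorbed thanks to \eqref{bound on p} while near $J_u$ it contributes exactly $F_\infty$, which rests on the moduli $\omega_F(\cdot;0),\omega_F(\cdot;1)$ of hypothesis~4) and on Proposition~\ref{propo: justification of limit}. The compactness input --- that $\sup_\e J_\e(u_\e,v_\e)<\infty$ forces precompactness in $SBD^2$ --- is also delicate, but is expected to be supplied by the Appendix.
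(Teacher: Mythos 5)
Your treatment of part (b) — density of functions smooth off a polyhedral jump set with convergence of energies, an explicit tube construction around the jump with $v_\e$ at the floor $\alpha\e$ on a core of optimally chosen width and a linear ramp of width $\simeq\e$, linear interpolation of $u$ across the core, and a final diagonalisation — is essentially the paper's Section \ref{sct Limsup}, and your absorption of the negative potential via \eqref{bound on p} is in substance Proposition \ref{prop: bnd energy AT}. The genuine gap is in the surface part of the liminf. You propose to obtain the density on $J_u$ by blow-up followed by slicing in the direction $\nu(z)$ and a one-dimensional optimal-profile computation whose value you claim equals $a\sqrt{\A([u]\odot\nu)\cdot([u]\odot\nu)}+b+F_{\infty}(z,[u]\odot\nu)$. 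This step would fail: a one-dimensional section of the energy along a direction $\xi$ only controls the scalar component $e(u_\e)\xi\cdot\xi$, so slicing can only bound from below directional quantities of the type $|\langle([u]\odot\nu)\xi,\xi\rangle|$; no combination of these reconstructs the genuinely vectorial densities $\sqrt{\A M\cdot M}$ (with $\A$ a general elasticity tensor) or a general positively one-homogeneous convex $F_{\infty}(z,M)$ evaluated at $M=[u]\odot\nu$. For instance $\sup_{|\xi|=1}|M\xi\cdot\xi|$ is the spectral radius of the symmetric matrix $M$, which for $M=a\odot\nu$ with $a$ not parallel to $\nu$ is strictly smaller than $|M|$. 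Slicing does deliver the isotropic coefficient $b$ (this is exactly how the paper gets it, through the coarea bound and \eqref{settenani}), but not the $[u]\odot\nu$-dependent terms; moreover your "value of the profile problem" is really the upper-bound computation, not a lower bound. The paper's mechanism is different: on $\{v_\e\le\lambda\}$ a pointwise Young inequality using the floor $v_\e\ge\alpha\e$ replaces the integrand by the convex, linear-growth integrand $G(x,M)=2\sqrt{\alpha\psi(0)}\sqrt{\A M\cdot M}+F(x,M,0)$; its lower semicontinuity along $u_\e\to u$ in $L^1$, with recession density $G_{\infty}(z,[u]\odot\nu)$ on $J_u$, is supplied by the affine-minorant (Reshetnyak-type) result of Proposition \ref{prop:limit of convex}, while a blow-up density argument (Proposition \ref{propo potential}) shows that the part of this integral carried by $\{v_\e>\lambda\}$ has vanishing $(n-1)$-density; the moduli $\omega_F(\cdot;0)$, $\omega_F(\cdot;1)$ then transfer $F(\cdot,\cdot,v_\e)$ to $F(\cdot,\cdot,0)$, resp.\ $F(\cdot,\cdot,1)$, as you intend. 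You would need to replace your 1D-profile step by an argument of this convexity/semicontinuity type.

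Two smaller points. First, the Appendix contains no compactness theorem: in the paper, $u\in SBD^2(\Omega)$ is obtained in Proposition \ref{prop: information on convergence} by truncating $u_\e$ to $\{v_\e\ge\lambda_\e\}$ (with the perimeter of that superlevel set bounded via the coarea formula and $|\nabla v_\e|\le1/\e$) and invoking closure/compactness in $SBD$; you should cite such a result from the literature rather than expect it from the Appendix. Second, in your density step note that Cortesani--Toader-type theorems give only an upper inequality for the surface terms (and require a nonnegative surface integrand, which here follows from \eqref{bound on p}); convergence of the energies is then recovered by combining with the already established liminf inequality, exactly as the paper does.
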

Moreover, we prove that the sequences with bounded energy are compact with respect to the $L^1$ topology. Namely the following theorem holds true:
	\begin{theorem}\label{main thm: comp}
With the notations and the assumptions introduced in Subsection \ref{sct:settings}, if $(u_{\e},v_{\e})\in H^1(\Omega;\R^n) \times V_{\e}$ are sequences such that
	\[
	\sup_{\e} \{\|u_{\e}\|_{L^1}+\F_{\e} (u_{\e},v_{\e})\}<+\infty
	\]
then there exists two subsequences $\{(u_{\e_k},v_{\e_k})\}_{k\in \N} \subset \{(u_{\e},v_{\e})\}_{\e>0}$ and $u\in SBD^2(\Omega)$ such that
	\[
	u_{\e_k} \rightarrow u, \ \ \ v_{\e_k} \rightarrow 1 \ \ \ \text{in $L^1$}
	\]
 and $Eu_k\rightharpoonup^* Eu$. Moreover, for any $\lambda\in(0,1)$ it holds
	\[
	e(u_{k})\ca_{\{v_{\e_k} \geq \lambda\}}\rightharpoonup e(u) \ \ \ \text{in $L^2(\Omega;M^{n\times n}_{sym})$}.
	\]	
	\end{theorem}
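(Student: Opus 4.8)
The argument will combine uniform a priori estimates (extracted from the energy bound by using the quantitative smallness \eqref{bound on p} of $\sigma$ to reabsorb the possibly negative, a priori unbounded potential term), $BD$-compactness for the limit $u$, a sharp cut-off modification of $(u_\e)$ producing a sequence in $SBD^2$ with uniformly bounded Griffith energy (so that $GSBD$-compactness upgrades $u$ to an $SBD^2$ function), and a short argument identifying the weak $L^2$-limit of the truncated symmetrized gradients.

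\emph{Step 1: a priori estimates.} Set $M:=\sup_\e\{\|u_\e\|_{L^1}+\F_\e(u_\e,v_\e)\}$ and $A_\e:=\int_\Omega v_\e\,\A e(u_\e)\cdot e(u_\e)\,\d x+\frac1\e\int_\Omega\psi(v_\e)\,\d x$. Since $F\ge-\sigma|\cdot|$ we get $A_\e\le M+\sigma\|e(u_\e)\|_{L^1}$. Fix $\lambda_0\in(0,1)$: on $\{v_\e\ge\lambda_0\}$ one has $\kappa^{-1}\lambda_0|e(u_\e)|^2\le v_\e\,\A e(u_\e)\cdot e(u_\e)$, hence $\int_{\{v_\e\ge\lambda_0\}}|e(u_\e)|\le(|\Omega|\kappa\lambda_0^{-1}A_\e)^{1/2}$; on $\{v_\e<\lambda_0\}$, using $v_\e\ge\alpha\e$, $\psi(v_\e)\ge\psi(\lambda_0)$ and the elementary bound $a t^2+b\ge2\sqrt{ab}\,t$ with $a=v_\e/\kappa$, $b=\psi(v_\e)/\e$, we obtain the pointwise inequality $|e(u_\e)|\le\frac{\sqrt\kappa}{2\sqrt{\alpha\psi(\lambda_0)}}\bigl(v_\e\,\A e(u_\e)\cdot e(u_\e)+\frac1\e\psi(v_\e)\bigr)$, so that $\int_{\{v_\e<\lambda_0\}}|e(u_\e)|\le\frac{\sqrt\kappa}{2\sqrt{\alpha\psi(\lambda_0)}}A_\e$. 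Choosing $\lambda_0$ so small that $\sigma\sqrt\kappa<2\sqrt{\alpha\psi(\lambda_0)}$ — possible because $\sigma<2\sqrt{\alpha\psi(0)/\kappa}$ by \eqref{bound on p} — the inequality $A_\e\le M+\sigma\|e(u_\e)\|_{L^1}$ becomes a quadratic inequality in $A_\e^{1/2}$ with $\e$-independent coefficients, whence $\bar A:=\sup_\e A_\e<\infty$. Consequently $\sup_\e\|e(u_\e)\|_{L^1}<\infty$; $\int_\Omega\psi(v_\e)\le\e\bar A$, so $|\{v_\e<\lambda\}|\le\e\bar A/\psi(\lambda)\to0$ for every $\lambda\in(0,1)$ (recall $\psi>0$ on $[0,1)$), whence $v_\e\to1$ in $L^1$; and for every fixed $\lambda\in(0,1)$, $\sup_\e\int_{\{v_\e\ge\lambda\}}|e(u_\e)|^2\le\kappa\lambda^{-1}\bar A<\infty$.

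\emph{Step 2: $BD$-compactness and a modified sequence.} As $u_\e\in H^1(\Omega;\R^n)$ we have $|Eu_\e|(\Omega)=\|e(u_\e)\|_{L^1}$, so $(u_\e)$ is bounded in $BD(\Omega)$; by the compact embedding $BD(\Omega)\hookrightarrow\hookrightarrow L^1(\Omega)$ a subsequence satisfies $u_\e\to u$ in $L^1$ with $u\in BD(\Omega)$ and $Eu_\e\rightharpoonup^*Eu$. Fix $\delta_0\in(0,\tfrac12)$. By the coarea formula for $v_\e\in W^{1,\infty}$ and $|\nabla v_\e|\le1/\e$,
\[
\int_{\delta_0}^{2\delta_0}\H^{n-1}(\{v_\e=s\})\,\d s=\int_{\{\delta_0<v_\e<2\delta_0\}}|\nabla v_\e|\,\d x\le\frac1\e\,|\{v_\e<2\delta_0\}|\le\frac{\bar A}{\psi(2\delta_0)},
\]
so by Chebyshev we may pick $\lambda_\e\in(\delta_0,2\delta_0)$ with $\{v_\e>\lambda_\e\}$ of finite perimeter, $\int_{\{v_\e=\lambda_\e\}}|u_\e|\,\d\H^{n-1}<\infty$ and $\H^{n-1}(\{v_\e=\lambda_\e\})\le C(\delta_0)$. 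Set $\hat u_\e:=u_\e\,\ca_{\{v_\e>\lambda_\e\}}$. Then $\hat u_\e\in SBD^2(\Omega;\R^n)$: $e(\hat u_\e)=e(u_\e)\,\ca_{\{v_\e>\lambda_\e\}}$ with $\|e(\hat u_\e)\|_{L^2}^2\le\kappa\lambda_\e^{-1}A_\e\le\kappa\delta_0^{-1}\bar A$, $J_{\hat u_\e}\subset\{v_\e=\lambda_\e\}$ so $\H^{n-1}(J_{\hat u_\e})\le C(\delta_0)$, and $E^c\hat u_\e=0$. Finally, by $BD(\Omega)\hookrightarrow L^{n/(n-1)}(\Omega)$ and Hölder, $\|\hat u_\e-u_\e\|_{L^1}=\|u_\e\,\ca_{\{v_\e\le\lambda_\e\}}\|_{L^1}\le\|u_\e\|_{L^{n/(n-1)}}\,|\{v_\e<2\delta_0\}|^{1/n}\to0$, so $\hat u_\e\to u$ in $L^1$ as well.

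\emph{Step 3: $SBD^2$-structure and the $L^2$-limit.} The sequence $(\hat u_\e)$ converges in $L^1$ to $u$ and satisfies $\sup_\e\bigl(\int_\Omega|e(\hat u_\e)|^2\,\d x+\H^{n-1}(J_{\hat u_\e})\bigr)<\infty$; by $GSBD$-compactness and the lower semicontinuity of the jump measure, $u\in GSBD(\Omega)$ with $\H^{n-1}(J_u)<\infty$ and $e(\hat u_\e)\rightharpoonup e(u)$ in $L^2(\Omega;M^{n\times n}_{sym})$ along a further subsequence; together with $u\in BD(\Omega)$, $e(u)\in L^2$ and $E^cu=0$ this gives $u\in SBD^2(\Omega)$. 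For the last assertion fix $\lambda\in(0,1)$: by Step 1 the field $e(u_\e)\,\ca_{\{v_\e\ge\lambda\}}$ is bounded in $L^2$, and it differs from $e(\hat u_\e)$ only on $E_\e:=\{v_\e\ge\lambda\}\,\triangle\,\{v_\e>\lambda_\e\}\subset\{v_\e<\max(\lambda,2\delta_0)\}$, where $|E_\e|\to0$ while $\int_{E_\e}|e(u_\e)|^2\le\kappa\min(\lambda,\delta_0)^{-1}\bar A$ stays bounded. An $L^2$-bounded family supported on sets of vanishing Lebesgue measure converges weakly to $0$ in $L^2$, so $e(u_\e)\,\ca_{\{v_\e\ge\lambda\}}$ and $e(\hat u_\e)$ have the same weak limit, namely $e(u)$, which is the claim.

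\emph{Main obstacle.} The two delicate points are: (i) in Step 1, that \eqref{bound on p} — precisely $\sigma<2\sqrt{\alpha\psi(0)/\kappa}$ — is exactly what turns the otherwise useless bound $A_\e\le M+\sigma\|e(u_\e)\|_{L^1}$ into a closed estimate, a larger $\sigma$ allowing the negative potential to drain the elastic and dissipation energies to $-\infty$; and (ii) in Step 2, that the cut level $\lambda_\e$ must be chosen inside a \emph{fixed} window $(\delta_0,2\delta_0)$ — keeping $\lambda_\e$ bounded away from $0$ controls $\|e(u_\e)\,\ca_{\{v_\e>\lambda_\e\}}\|_{L^2}$, while the dissipation bound (smallness of $|\{v_\e<2\delta_0\}|$) is what forces $\H^{n-1}(\{v_\e=\lambda_\e\})$ to be bounded. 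The simultaneous control of these two quantities is precisely what promotes the $BD$-limit to an $SBD^2$ function with finite surface energy.
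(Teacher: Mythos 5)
Your argument is correct and follows essentially the same route as the paper: Step 1 is the absorption estimate of Proposition \ref{prop: bnd energy AT} (organized as a quadratic inequality in $A_\e^{1/2}$ instead of the $(1+\mathcal{W}_{\e})$ trick), and Steps 2--3 reproduce the truncation $u_\e\ca_{\{v_\e>\lambda_\e\}}$ at a coarea-selected level and the $SBD$/$GSBD$-closure argument of Proposition \ref{prop: information on convergence}, combined with the $BD\to L^1$ compactness the paper takes from Suquet, while also spelling out the weak $L^2$ convergence of $e(u_\e)\ca_{\{v_\e\geq\lambda\}}$ that the paper leaves implicit. One cosmetic fix: the existence of $\lambda_0$ with $\sigma\sqrt{\kappa}<2\sqrt{\alpha\psi(\lambda_0)}$ should be read off directly from the first inequality in \eqref{bound on p} (discarding the second term in the denominator) rather than from $\sigma<2\sqrt{\alpha\psi(0)/\kappa}$ together with ``$\lambda_0$ small'', since the decreasing convex $\psi$ need not be continuous at $0$.
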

The proof of Theorem \ref{thm: main thm g conv} is obtained by separately proving statement a) (in Section \ref{sct Liminf}, Theorem \ref{thm: liminf inequality}) and statement b) (in Section  \ref{sct Limsup}, Theorem \ref{thm: limsup inequality} ). The compactness Theorem is proven in Subsection \ref{sct Comp} and it is basically a consequence of Propositions \ref{prop: bnd energy AT} and \ref{prop: information on convergence} in Section \ref{sct Liminf}. For the existence of minimizers with prescribed Dirichlet boundary condition we send the reader to Subsection \ref{Exist} where, under specific additional hypothesis on the potential $F$, on the boundary data and on the domain, the relaxed problem over $\ov{\Omega}$ is treated. We finally provide some examples of applications in Section \ref{sct: Example}.

\section{Liminf inequality}\label{sct Liminf}

This section is entirely devoted to the proof of the following theorem:
\begin{theorem}\label{thm: liminf inequality}
Given $(u_{\e},v_{\e})\in H^1(\Omega;\R^n)\times V_{\e}$ such that $u_{\e}\rightarrow u$ in $L^1$ and $v_{\e}\rightarrow v$ a.e. it holds
	\[
	\liminf_{\e\rightarrow 0} \mathcal{F}(u_{\e},v_{\e}) \geq \mathcal{F}(u,v).
	\]
\end{theorem}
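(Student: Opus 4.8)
The plan is the classical localization-and-blow-up scheme for $\Gamma$-liminf estimates: first reduce the assertion to lower bounds on the bulk and surface densities of the weak-$*$ limit of the energy measures, then estimate each density by rescaling around a typical point.

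\emph{Reduction.} If $\liminf_{\e}\F_{\e}(u_{\e},v_{\e})=+\infty$ there is nothing to prove, so pass to a subsequence (not relabelled) along which the liminf is attained as a finite limit with $C_{0}:=\sup_{\e}\F_{\e}(u_{\e},v_{\e})<+\infty$. Since $\psi\geq\psi(1)=0$ and $F(x,M,v)\geq-\sigma|M|$, this gives $\int_{\Omega}v_{\e}\A e(u_{\e})\cdot e(u_{\e})+\tfrac1\e\int_{\Omega}\psi(v_{\e})\leq C_{0}+\sigma\int_{\Omega}|e(u_{\e})|$; splitting $\int_{\Omega}|e(u_{\e})|$ over $\{v_{\e}\geq\lambda\}$ and $\{v_{\e}<\lambda\}$, using $v_{\e}\geq\alpha\e$ on the second set (whose measure is $\leq\psi(\lambda)^{-1}\int_{\Omega}\psi(v_{\e})$ because $\psi$ is decreasing), a Cauchy--Schwarz absorption made possible precisely by the smallness \eqref{bound on p} of $\sigma$ yields $\sup_{\e}\int_{\Omega}v_{\e}\A e(u_{\e})\cdot e(u_{\e})<+\infty$ and $\int_{\Omega}\psi(v_{\e})\to0$. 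As $\psi$ is decreasing with $\psi(1)=0$, the latter forces $v_{\e}\to1$ in measure, hence $v=1$ a.e.; in particular the inequality is trivial unless $\liminf_{\e}\F_{\e}(u_{\e},v_{\e})<+\infty$, in which case $v=1$ a.e.\ and, feeding the energy bound into the Ambrosio--Tortorelli comparison contained in Propositions \ref{prop: bnd energy AT} and \ref{prop: information on convergence}, $u\in SBD^{2}(\Omega;\R^{n})$, $Eu_{\e}\rightharpoonup^{*}Eu$ and $e(u_{\e})\,\ca_{\{v_{\e}\geq\lambda\}}\rightharpoonup e(u)$ in $L^{2}$ for every $\lambda\in(0,1)$. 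It then remains to prove $\liminf_{\e}\F_{\e}(u_{\e},v_{\e})\geq\Phi(u)$. Writing the energy densities as Radon measures $\mu_{\e}$ on $\Omega$ (signed but of uniformly bounded total variation, since $|F|\leq\ell|e(u_{\e})|$ and $\sup_{\e}\int_{\Omega}|e(u_{\e})|<+\infty$) and passing to a weak-$*$ limit $\mu$ along a further subsequence, a standard localization argument reduces the claim to the two density estimates
\[
\frac{d\mu}{d\L^{n}}(x_{0})\geq\A e(u)(x_{0})\cdot e(u)(x_{0})+F(x_{0},e(u)(x_{0}),1)\quad\text{for $\L^{n}$-a.e.\ }x_{0},
\]
\[
\frac{d\mu}{d(\H^{n-1}\llcorner J_{u})}(z_{0})\geq b+a\,\sqrt{\A([u](z_{0})\odot\nu(z_{0}))\cdot([u](z_{0})\odot\nu(z_{0}))}+F_{\infty}(z_{0},[u](z_{0})\odot\nu(z_{0}))
\]
for $\H^{n-1}$-a.e.\ $z_{0}\in J_{u}$ (the two right-hand sides being carried by mutually singular measures), each to be obtained by a blow-up on balls centred at $x_{0}$, resp.\ $z_{0}$.

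\emph{Bulk density.} At an $\L^{n}$-a.e.\ point $x_{0}$ one may assume $x_{0}$ is a Lebesgue point of $e(u)$ and of $x\mapsto F(x,e(u)(x),1)$ and that the blow-up of $u$ is the affine map $x\mapsto e(u)(x_{0})\,x$; moreover, since the part of the energy carried by $\{v_{\e}<\lambda\}$ concentrates in the blow-up on $J_{u}$, the radii may be chosen so that $\{v_{\e}<\lambda\}$ contributes negligibly. On $\{v_{\e}\geq\lambda\}$ one has $v_{\e}\A e(u_{\e})\cdot e(u_{\e})\geq\lambda\,\A e(u_{\e})\cdot e(u_{\e})$ and, by the last condition of Assumption \ref{hyp on F}, $F(x,e(u_{\e}),v_{\e})\geq F(x,e(u_{\e}),1)-\omega_{F}(\lambda;1)|e(u_{\e})|$; using $e(u_{\e})\,\ca_{\{v_{\e}\geq\lambda\}}\rightharpoonup e(u)$, weak lower semicontinuity of the convex integrands $M\mapsto\A M\cdot M$ and $M\mapsto F(x,M,1)$, and finally letting $\lambda\to1$ (so $\omega_{F}(\lambda;1)\to0$ while the uniform $L^{2}$-bound absorbs the error) gives the bulk estimate.

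\emph{Surface density.} This is the substantial point. Blowing up at an $\H^{n-1}$-a.e.\ $z_{0}\in J_{u}$, with $\rho\to0$ and, diagonally, $\e\ll\rho$, the rescaled maps $\hat u$ converge in $L^{1}(B_{1})$ to the pure jump $y\mapsto[u](z_{0})\,\ca_{\{y\cdot\nu(z_{0})>0\}}$, the rescaled damage fields $\hat v$ still satisfy $\int\psi(\hat v)\to0$ and $|\nabla\hat v|\leq\rho/\e$, and each of the three (rescaled) energy terms stays bounded. Write $\eta:=[u](z_{0})$, $\nu:=\nu(z_{0})$, $H:=\{y\cdot\nu=0\}\cap B_{1}$ and $\|M\|_{\A}:=\sqrt{\A M\cdot M}$. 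The key fact is that for every small $\delta_{0}>0$ the displacement transition concentrates where $\hat v$ collapses, $e(\hat u)\,\ca_{\{\hat v\leq\delta_{0}\}}\,\L^{n}\rightharpoonup^{*}(\eta\odot\nu)\,\H^{n-1}\llcorner H$, which follows from $E\hat u=e(\hat u)\L^{n}\rightharpoonup^{*}(\eta\odot\nu)\H^{n-1}\llcorner H$ together with $e(\hat u)\,\ca_{\{\hat v>\delta_{0}\}}\to0$ in $L^{1}$ (on $\{\hat v\geq1-\delta\}$ one has $\int|e(\hat u)|^{2}\leq C\rho/(1-\delta)\to0$, and on $\{\delta_{0}<\hat v<1-\delta\}$ one combines the vanishing measure of this set with the $L^{2}$-bound). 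On the disjoint region $\{\hat v\leq\delta_{0}\}$ the arithmetic--geometric inequality gives $\hat v\,\A e(\hat u)\cdot e(\hat u)+\tfrac1\e\psi(\hat v)\geq\frac{2}{\sqrt\e}\sqrt{\hat v\,\psi(\hat v)}\,\|e(\hat u)\|_{\A}\geq2\sqrt{\alpha\psi(\delta_{0})}\,\|e(\hat u)\|_{\A}$ (using $\hat v\geq\alpha\e$ and $\psi(\hat v)\geq\psi(\delta_{0})$ there), so Reshetnyak lower semicontinuity of $\mu\mapsto\int\|\tfrac{d\mu}{d|\mu|}\|_{\A}\,d|\mu|$ makes this contribution, in the limit, $\geq2\sqrt{\alpha\psi(\delta_{0})}\,\|\eta\odot\nu\|_{\A}\,\H^{n-1}(H)\to a\,\|\eta\odot\nu\|_{\A}\,\H^{n-1}(H)$ as $\delta_{0}\to0$; likewise, since $\omega_{F}(\hat v;0)\leq\omega_{F}(\delta_{0};0)$ there, $F(\cdot,e(\hat u),\hat v)\geq F(\cdot,e(\hat u),0)-\omega_{F}(\delta_{0};0)|e(\hat u)|$ with a vanishing error (the $L^{1}$-norm of $e(\hat u)$ on $\{\hat v\leq\delta_{0}\}$ is bounded by $C/\sqrt{\psi(\delta_{0})}$), and Reshetnyak lower semicontinuity for the convex integrand $F(z_{0},\cdot,0)$, whose recession is $F_{\infty}(z_{0},\cdot)$ by Proposition \ref{propo: justification of limit}, adds $\geq F_{\infty}(z_{0},\eta\odot\nu)\,\H^{n-1}(H)$. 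Finally, on the disjoint ramp region $\{\delta_{0}<\hat v<1-\delta\}$ one keeps only $\tfrac1\e\psi(\hat v)$ and uses the coarea formula, $|\nabla\hat v|\leq\rho/\e$, and the level-set lower bound $\H^{n-1}(\{\hat v=s\}\cap B_{1})\geq2\,\H^{n-1}(H)$ for $s\in(\delta_{0},1-\delta)$ — seen by slicing in the direction $\nu$ — to obtain a contribution $\geq2\,\H^{n-1}(H)\int_{\delta_{0}}^{1-\delta}\psi(s)\,ds\to b\,\H^{n-1}(H)$. Summing the three contributions over disjoint sets and normalizing by $\H^{n-1}(H)$ gives the surface estimate.

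\emph{Main obstacle.} The delicate part is the surface density, and within it two points: (i) proving that the displacement transition concentrates on the collapse set, i.e.\ $e(\hat u)\,\ca_{\{\hat v\leq\delta_{0}\}}\L^{n}\rightharpoonup^{*}(\eta\odot\nu)\H^{n-1}\llcorner H$ — here the interplay between the energy bound, the strict positivity $\psi(s)>0$ for $s<1$, and the vanishing parameter is essential, and it is also what dictates the diagonal scaling $\e\ll\rho$; and (ii) the multiplicity estimate $\H^{n-1}(\{\hat v=s\}\cap B_{1})\geq2\,\H^{n-1}(H)$ for the intermediate level sets of $\hat v$, together with turning the two Reshetnyak semicontinuity steps and the arithmetic--geometric splitting into exactly the constants $a=2\sqrt{\alpha\psi(0)}$ and $b=2\int_{0}^{1}\psi$. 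The remaining ingredients — the reduction, the compactness Propositions, the bulk estimate, and the recession properties of $F$ — are routine once Assumption \ref{hyp on F} and Proposition \ref{propo: justification of limit} are in hand.
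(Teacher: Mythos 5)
Your overall architecture (reduction via Propositions \ref{prop: bnd energy AT} and \ref{prop: information on convergence}, then a Fonseca--M\"uller localization with separate bulk and surface density estimates, the surface one by blow-up with a three-zone splitting of $\hat v$, AM--GM, Reshetnyak-type semicontinuity and a coarea multiplicity bound) is a genuinely different route from the paper, which never blows up $u$ at jump points: there the domain is split by a damage threshold, the bulk terms and the Griffith term $2(h(1)-h(\lambda))\H^{n-1}(J_u)$ come from $\{v_\e\geq\lambda\}$ via convexity and the quoted perimeter bound \eqref{settenani}, while the jump-opening terms come from $\{v_\e\leq\lambda\}$ via the global semicontinuity result with recession functions (Proposition \ref{prop:limit of convex}) combined with a comparison of the $(n-1)$-densities of two limit measures (Proposition \ref{propo potential}). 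Your scheme is viable in principle, but as written it has two genuine gaps, both located exactly where the paper deploys its special machinery.

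First, in the surface estimate you apply ``Reshetnyak lower semicontinuity for the convex integrand $F(z_0,\cdot,0)$'', i.e.\ you freeze the space variable at $z_0$. Under Assumption \ref{hyp on F} there is no spatial modulus of continuity of $F(\cdot,M,0)$ that is uniform relative to $|M|$ (a condition of the type \eqref{1.b} is only imposed later, for the existence section), and the replacement error $\rho\,|F(z_0+\rho y,\rho^{-1}e(\hat u),0)-F(z_0,\rho^{-1}e(\hat u),0)|$ is only dominated by $(\ell+\sigma)|e(\hat u)|$, which is merely bounded in $L^1$ and in fact concentrates on $H$; so the error is of the same order as the main term and cannot be discarded. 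The repair is to minorize $F(x,\cdot,0)$ by countably many affine functions $a_j(x)\cdot M+b_j(x)$ with continuous coefficients (Proposition \ref{prop: convex decomposition}), pass to the limit term by term using the weak-$*$ convergence of $e(\hat u)\ca_{\{\hat v\leq\delta_0\}}\L^n$ and the continuity of $a_j$ at $z_0$, and then take the supremum in $j$ (here the limit polar is constant on $H$, so no localization lemma is needed); but this is precisely the appendix mechanism the paper uses, and without it your frozen-$z_0$ step does not follow from the stated hypotheses. Second, the multiplicity bound $\H^{n-1}(\{\hat v=s\}\cap B_1)\geq 2\H^{n-1}(H)$ ``seen by slicing in the direction $\nu$'' is not correct as stated in the SBD setting: the one-dimensional slices in direction $\nu$ only control $\hat u\cdot\nu$, so a purely tangential jump ($[u](z_0)\cdot\nu(z_0)=0$, a shear opening) is invisible along those lines and nothing forces $\hat v$ to dip there. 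One must slice along a cone of directions $\xi$ with $\xi\cdot\nu\neq0$ and $[u](z_0)\cdot\xi\neq0$ and then recover the $\H^{n-1}(H)$ factor by projection; this is exactly the nontrivial input that the paper imports as \eqref{settenani} from Focardi--Iurlano rather than reproving. Two smaller points should also be made explicit: dropping the possibly negative $F$-term on the ramp and high regions (and dropping $\{v_\e<\lambda\}$ in the bulk estimate) is legitimate only after absorbing $-\sigma|e(\hat u)|$ into the elastic term, which costs an $O(\rho)$ (resp.\ vanishing-measure) error and uses \eqref{bound on p}; and your localization for the signed energy measures needs the resulting pointwise lower bound (density $\geq 0$ where $v_\e\leq\lambda^*$ and $\geq -C$ elsewhere) to guarantee that the part of $\mu$ singular with respect to both $\L^n$ and $\H^{n-1}\llcorner J_u$ is nonnegative.
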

To achieve the proof we will analyze separately what happens on the energy restricted on the sequence of sets $\Omega_{\e}^{\lambda}=\{v_{\e}\geq \lambda\}$ and $\Omega\setminus\Omega_{\e}^{\lambda}$. We start by first gaining some information on the sequences with bounded energy. To do that we will exploit the hypothesis on the nonlinear potential $F$. Let us denote by
\begin{equation}
\mathcal{W}_{\e}(u,v):=
\left\{
\begin{array}{ll}
\displaystyle \int_{\Omega} \left( v \A e(u)\cdot e(u) + \frac{\psi(v)}{\e}\right)\d x \ \  & \ \  \text{if $(u,v) \in H^1(\Omega;\R^n) \times V_{\e}$}\\
\text{}& \\
+\infty \ \ &\ \ \text{otherwise}
\end{array}
\right.
\end{equation}
and let us observe that 
	\[
	\F_{\e}(u,v)=\mathcal{W}_{\e}(u,v)+\int_{\Omega} F(x,e(u),v)\d x.
	\]
We underline that any bounds of the type
	\[
	\sup_{\e>0} \{\mathcal{W}_{\e}(u_{\e},v_{\e})\}<+\infty
	\]
leads, as we will discuss below, to an information on the convergence of $u_{\e},v_{\e}$. We now show how to derive such kind of control starting from the boundedness of $\F_{\e}$.

\begin{proposition}\label{prop: bnd energy AT}
Under the hypothesis stated in Subsection \ref{sct:settings} on $\mathbb{A}, \psi$ and $p$, there exists a constant $C$ depending on $\alpha,\mathbb{A}, |\Omega|,\psi$ and $\sigma$ only such that
	\begin{equation}\label{eqn: bound on AT energy}
	\mathcal{W}_{\e}(u,v)< C (\F_{\e}(u,v)+1)
	\end{equation}
for all $(u,v)\in H^1(\Omega;\R^n) \times V_{\e}$.
\end{proposition}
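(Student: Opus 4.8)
The plan is to exploit that $\F_\e$ and $\mathcal{W}_\e$ differ only by the term $\int_\Omega F(x,e(u),v)\,\d x$, which by item~3) of Assumption~\ref{hyp on F} is bounded below by $-\sigma\int_\Omega|e(u)|\,\d x$; hence
\[
\mathcal{W}_\e(u,v)\ \le\ \F_\e(u,v)+\sigma\int_\Omega|e(u)|\,\d x ,
\]
and the whole point will be to reabsorb $\sigma\int_\Omega|e(u)|\,\d x$ into $\mathcal{W}_\e(u,v)$ with a multiplicative constant strictly below $1$. I would write $\mathcal{W}_\e(u,v)=W_1+W_2$ with $W_1:=\int_\Omega v\,\A e(u)\cdot e(u)\,\d x\ge0$ and $W_2:=\frac1\e\int_\Omega\psi(v)\,\d x\ge0$ (so $W_1\le\mathcal{W}_\e(u,v)$), and, using \eqref{bound on p}, fix once and for all a $\lambda\in(0,1)$ such that
\[
\sigma\sqrt{\tfrac{\kappa|\Omega|}{\lambda}}+\tfrac{\sigma}{2}\sqrt{\tfrac{\kappa}{\alpha\psi(\lambda)}}\ <\ 1 ;
\]
this inequality is merely a rearrangement of $\sigma<\frac{2\sqrt{\alpha\psi(\lambda)}}{\sqrt\kappa(1+2\sqrt{\alpha|\Omega|\psi(\lambda)/\lambda})}$, and the existence of such a $\lambda$ (in particular one with $\psi(\lambda)>0$) follows from $\sigma$ lying below the maximum in \eqref{bound on p}. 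Then I would split $\int_\Omega|e(u)|\,\d x=\int_{\{v\ge\lambda\}}|e(u)|\,\d x+\int_{\{v<\lambda\}}|e(u)|\,\d x$ and estimate the two pieces separately.

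On $\{v\ge\lambda\}$: the lower ellipticity bound and $v\ge\lambda$ give $|e(u)|^2\le\kappa\,\A e(u)\cdot e(u)\le\frac\kappa\lambda\,v\,\A e(u)\cdot e(u)$, so Cauchy--Schwarz yields $\int_{\{v\ge\lambda\}}|e(u)|\,\d x\le|\Omega|^{1/2}(\tfrac\kappa\lambda W_1)^{1/2}=\sqrt{\tfrac{\kappa|\Omega|}{\lambda}}\,W_1^{1/2}$, and then $W_1^{1/2}\le W_1+1\le\mathcal{W}_\e(u,v)+1$. On $\{v<\lambda\}$: since $\psi$ is decreasing, $\psi(v)\ge\psi(\lambda)$ there, hence $|\{v<\lambda\}|\le\frac{\e}{\psi(\lambda)}W_2$; and since $v>\e\alpha$ on all of $\Omega$, the ellipticity bound gives $|e(u)|^2\le\frac{\kappa}{\e\alpha}\,v\,\A e(u)\cdot e(u)$ there, hence $\int_{\{v<\lambda\}}|e(u)|^2\,\d x\le\frac{\kappa}{\e\alpha}W_1$. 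The key is that the powers of $\e$ cancel when these are combined under Cauchy--Schwarz:
\[
\int_{\{v<\lambda\}}|e(u)|\,\d x\ \le\ \Big(\tfrac{\e}{\psi(\lambda)}W_2\Big)^{1/2}\Big(\tfrac{\kappa}{\e\alpha}W_1\Big)^{1/2}=\sqrt{\tfrac{\kappa}{\alpha\psi(\lambda)}}\,(W_1W_2)^{1/2}\ \le\ \tfrac12\sqrt{\tfrac{\kappa}{\alpha\psi(\lambda)}}\,\mathcal{W}_\e(u,v) ,
\]
the last step being the arithmetic--geometric mean inequality $\sqrt{W_1W_2}\le\frac12(W_1+W_2)$.

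Collecting the two pieces,
\[
\mathcal{W}_\e(u,v)\ \le\ \F_\e(u,v)+\Big(\sigma\sqrt{\tfrac{\kappa|\Omega|}{\lambda}}+\tfrac{\sigma}{2}\sqrt{\tfrac{\kappa}{\alpha\psi(\lambda)}}\Big)\mathcal{W}_\e(u,v)+\sigma\sqrt{\tfrac{\kappa|\Omega|}{\lambda}} ,
\]
and since the bracketed coefficient is $<1$ by the choice of $\lambda$, one moves it to the left-hand side and divides to obtain $\mathcal{W}_\e(u,v)\le C(\F_\e(u,v)+1)$ with $C=\big(1+\sigma\sqrt{\kappa|\Omega|/\lambda}\big)\big(1-\sigma\sqrt{\kappa|\Omega|/\lambda}-\tfrac\sigma2\sqrt{\kappa/(\alpha\psi(\lambda))}\big)^{-1}$, which depends only on $\kappa$ (i.e.\ on $\A$), $|\Omega|$, $\alpha$, $\sigma$ and $\psi$ (through the admissible $\lambda$), and in particular is independent of $\e$ and of $(u,v)$. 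I do not expect any genuine analytic obstacle here: the argument uses only ellipticity, Cauchy--Schwarz and Young's inequality. The one thing requiring care is the bookkeeping --- checking that \eqref{bound on p} is exactly the strict inequality needed to close the absorption, and keeping every constant $\e$-free, which is precisely why the mutual cancellation of the factors $\e^{+1}$ (coming from $|\{v<\lambda\}|$) and $\e^{-1}$ (coming from $v>\e\alpha$) on the set $\{v<\lambda\}$ is essential.
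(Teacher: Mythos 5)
Your proof is correct and follows essentially the same route as the paper's: bound $\int_\Omega F\,\d x$ from below by $-\sigma\int_\Omega|e(u)|\,\d x$, split along the level set $\{v\le\lambda\}$, use Cauchy--Schwarz with $v\ge\lambda$ on one piece and the cancellation between $v\ge\alpha\e$ and $\psi(v)\ge\psi(\lambda)$ on the other, then absorb into $\mathcal{W}_\e$ thanks to \eqref{bound on p}. The only (harmless) cosmetic differences are that you fix a single admissible $\lambda$ rather than minimizing over $\lambda\in(0,1)$, and you obtain the estimate on $\{v<\lambda\}$ via integral Cauchy--Schwarz plus the arithmetic--geometric mean inequality instead of the paper's pointwise Young inequality.
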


\begin{proof}
The key point is the estimate
	\begin{equation}\label{eqn: bound part 0}
  \int_{\Omega}F(x,e(u),v)  \d x \geq - \sigma \int_{\Omega}  |e(u)| \d x.
	\end{equation}
Set
	\[
	\Omega^{\lambda}=\{v\leq \lambda\}
	\]
and notice that
	\begin{align*}
	 \int_{\Omega}  |e(u)| \d x= \int_{\Omega\setminus \Omega^{\lambda}}  |e(u)| \d x + \int_{\Omega^{\lambda}}  |e(u)| \d x
	\end{align*}
and that
\begin{align}
\int_{\Omega\setminus \Omega^{\lambda}}  |e(u)| \d x &\leq \sqrt{|\Omega|} \left(\int_{\Omega\setminus \Omega^{\lambda}}  |e(u)|^{2} \d x \right)^{1/2}\nonumber \\
&\leq \sqrt{\frac{|\Omega|}{\lambda}} \left(\int_{\Omega\setminus \Omega^{\lambda}} v |e(u)|^{2} \d x \right)^{1/2}\leq \sqrt{\kappa}\sqrt{ \frac{|\Omega|  }{ \lambda}} \sqrt{ \mathcal{W}_{\e}(u,v) }.\label{eqn: bound part 1}
	\end{align}
On the other hand,
	\begin{align}
\int_{\Omega^{\lambda}}  |e(u)| \d x &=\frac{ \sqrt{\kappa}}{2\sqrt{\alpha\psi(\lambda)}} \int_{\Omega^{\lambda}} 2\sqrt{\kappa^{-1} \psi(\lambda)} \sqrt{\frac{\alpha \e}{\e}} |e(u)| \d x \nonumber \\&
\leq\frac{ \sqrt{\kappa}}{2\sqrt{\alpha\psi(\lambda)}} \left(\int_{\Omega^{\lambda}}\alpha \e\kappa^{-1}|e(u)|^2\d x+\int_{\Omega^{\lambda}}\frac{\psi(\lambda)}{\e} \d x  \right) \nonumber\\
&\leq\frac{\sqrt{\kappa}}{2 \sqrt{\alpha\psi(\lambda)}} \left(\int_{\Omega^{\lambda}}v\mathbb{A}e(u) \cdot e(u)\d x+\int_{\Omega^{\lambda}}\frac{\psi(v)}{\e} \d x  \right) \nonumber\\
&\leq\frac{\sqrt{\kappa}}{2 \sqrt{\alpha \psi(\lambda)} } \mathcal{W}_{\e}(u,v). \label{eqn: bound part 2}
	\end{align}
In particular, by combining \eqref{eqn: bound part 0},\eqref{eqn: bound part 1} and \eqref{eqn: bound part 2} we obtain, for any $(u,v)\in H^1(\Omega;\R^n)\times V_{\e}$ 
	\begin{align}
  \int_{\Omega} F(x,e(u),v)  \d x &\geq - \sigma \sqrt{ \frac{\kappa}{\alpha}}\left[\sqrt{\frac{\alpha|\Omega| }{  \lambda} } \sqrt{\mathcal{W}_{\e}(u,v)}+\frac{1}{2\sqrt{\psi(\lambda)}} \mathcal{W}_{\e}(u,v)\right]\nonumber\\
	&\geq - \sigma \sqrt{ \frac{ \kappa}{\alpha}} (1+\mathcal{W}_{\e}(u,v) ) \left[\sqrt{\frac{\alpha |\Omega| }{\lambda} } +\frac{1}{2\sqrt{\psi(\lambda)}}  \right]\nonumber\\
	&=- \sigma (1+\mathcal{W}_{\e}(u,v) ) \left[\frac{\sqrt{\kappa} (1+2\sqrt{\alpha |\Omega| \psi(\lambda)/\lambda})}{2\sqrt{\alpha \psi(\lambda)}} \right], \label{eqn: bound on bad term}
	\end{align}
	where we have used the fact that $\sqrt{\mathcal{W}_{\e}(u,v)}$ and $\mathcal{W}_{\e}(u,v)$ are each always bounded by $(1+\mathcal{W}_{\e}(u,v) )$. Moreover, inequality \eqref{eqn: bound on bad term} holds for any $\lambda\in (0,1)$ and hence it holds for the minimum among $\lambda$ which means that
\begin{align*}
 \int_{\Omega} F(x,e(u),v) \d x &\geq - \sigma (1+\mathcal{W}_{\e}(u,v) ) \min_{\lambda\in(0,1)} \left\{\frac{\sqrt{\kappa} (1+2\sqrt{\alpha |\Omega| \psi(\lambda)/\lambda})}{2\sqrt{\alpha \psi(\lambda)}}\right\}.
	\end{align*}
Notice that Assumption 1) in \ref{hyp on F} requires that 
	\[
	\sigma < \max_{\lambda\in (0,1)}\left\{ \frac{2\sqrt{\alpha \psi(\lambda)}}{\sqrt{\kappa}(1+2\sqrt{\alpha |\Omega|\psi(\lambda)/\lambda})}\right\} = \left(\min_{\lambda\in(0,1)} \left\{\frac{\sqrt{\kappa} (1+2\sqrt{\alpha |\Omega| \psi(\lambda)/\lambda})}{2\sqrt{\alpha \psi(\lambda)}}\right\}\right)^{-1}.
	\]
In particular for some $\delta>0$ depending on $\alpha, \mathbb{A}, |\Omega|, \psi$ and $\sigma$ only  we have
\[
\sigma  \min_{\lambda\in(0,1)} \left\{\frac{\sqrt{\kappa} (1+2\sqrt{\alpha |\Omega| \psi(\lambda)/\lambda})}{2\sqrt{\alpha \psi(\lambda)}}\right\} \leq (1-\delta)
\]
leading to
	\begin{equation}\label{quella li}
		\int_{\Omega} F(x,e(u),v)\d x \geq - (1-\delta) (1+\mathcal{W}_{\e}(u,v) ) .
	\end{equation}
By exploiting \eqref{quella li} we reach
		\begin{align*}
\mathcal{F}_{\e}(u,v) &= \mathcal{W}_{\e}(u,v) +\int_{\Omega} F(x,e(u),v)\d x \\
&\geq  \mathcal{W}_{\e}(u,v) -(1-\delta)\mathcal{W}_{\e}(u,v) -(1-\delta)\geq  \delta \mathcal{W}_{\e}(u,v) -1
	\end{align*}
which, by setting $C=\delta^{-1}$, achieves the proof.
\end{proof}
Let us now analyze the behaviour of the part of the energy that lives on the set $\{v_{\e}\geq \lambda\}$. We set up some notation that will be repeatedly used in this subsection. Given a sequence $v_{\e}\in V_{\e}$ and a fixed $\lambda\in(0,1)$ we define
\[
\Omega_{\e}^{\lambda}=\{v_{\e}\leq \lambda\}.
\]
We also set
	\begin{align*}
	I_{\e}^1(\lambda)&:=\int_{\Omega\setminus \Omega_{\e}^{\lambda}} v_{\e}\A e(u_{\e})\cdot e(u_{\e}) \d x , \\
	I_{\e}^2(\lambda)&:=\int_{\Omega\setminus \Omega_{\e}^{\lambda}} \frac{\psi(v_{\e})}{\e} \d x, \\ 
	I_{\e}^3(\lambda)&:=\int_{\Omega_{\e}^{\lambda}} \left(v_{\e}\A e(u_{\e})\cdot e(u_{\e}) + \frac{\psi(v_{\e})}{\e}\right) \d x.
	\end{align*}
We also	define $\F_\e(u_{\e},v_{\e};E)$, $\F(u,v;E)$ as the functionals $\F_\e$ and $\F$ with $\Omega$ replaced by $E$. Then
	$$
	\F_\e(u_{\e},v_{\e};\Omega_{\e}^{\lambda})=	I_{\e}^3(\lambda)+\int_{\Omega_\e^\lambda}F(x,e(u_{\e}),v_{\e}) dx
	$$
	 is the part of the energy that will provide the jump terms in the limit, as Proposition \ref{propo potential} will show. Let us first treat the bulk part $\F_\e(u_{\e},v_{\e})-\F_\e(u_{\e},v_{\e};\Omega_{\e}^{\lambda})=I_{\e}^1(\lambda)+I_{\e}^2(\lambda)+\int_{\Omega\setminus\Omega_\e^\lambda}F(x,e(u_{\e}),v_{\e}) $.
	
\begin{proposition}\label{prop: information on convergence}
Let $(u_{\e},v_{\e})\in H^1(\Omega;\R^n)\times V_{\e}$ be such that 	
	\[
	u_{\e}\rightarrow u, \ \ v_{\e}\rightarrow v \ \ \ \text{in $L^1$}
	\]
and with
	\begin{equation}\label{eqn: bnd on phase field}
	\sup_{\e>0}\{\F_{\e}(u_{\e},v_{\e})\}<+\infty.
	\end{equation}
Then 
	\begin{equation}\label{eqn:uniform bnd on L1 of strain}
	\sup_{\e>0} \left\{\int_{\Omega} |e(u_{\e})|\d x\right\}<+\infty.
	\end{equation}
Moreover $u\in SBD^2(\Omega;\R^n)$, $v=1$ a.e. in $\Omega$ and  for any $\lambda>0$ it holds
\begin{equation}\label{eqn: L2 bnd}
	\sup_{\e>0} \left\{\int_{\Omega\setminus \Omega_{\e}^{\lambda}} |e(u_{\e})|^2\d x, \right\}<+\infty,
\end{equation}	
\begin{equation}\label{eqn: limit part one abs}
	\begin{split}
	\liminf_{\e\rightarrow 0} \int_{\Omega\setminus \Omega_{\e}^{\lambda }} &\left[v_{\e} \mathbb{A}e(u_{\e}) \cdot e(u_{\e}) +\frac{\psi(v_{\e})}{\e} + F(x,e(u_{\e}),v_{\e}) \right] \d x\\
	& \geq \int_{\Omega} \left[\mathbb{A} e(u)\cdot e(u)+F(x,e(u),1)) \right]\d x + 2(h(1)-h(\lambda) ) \H^{n-1}(J_u)
	\end{split}
	\end{equation}
where $	h(t):=\int_0^t \psi(\tau)\d \tau$.
\end{proposition}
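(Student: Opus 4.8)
The plan is to establish, in this order, the a priori bounds, the compactness statement ($u\in SBD^2$, $v=1$), then the bulk part of \eqref{eqn: limit part one abs}, and finally the surface part, which carries all the real difficulty.

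\emph{A priori bounds and compactness.} First I would feed \eqref{eqn: bnd on phase field} into Proposition \ref{prop: bnd energy AT} to get $\sup_\e\mathcal{W}_\e(u_\e,v_\e)<+\infty$. Splitting $\Omega=(\Omega\setminus\Omega_\e^\lambda)\cup\Omega_\e^\lambda$ and repeating verbatim the computations \eqref{eqn: bound part 1}--\eqref{eqn: bound part 2} — Cauchy--Schwarz together with $v_\e\ge\lambda$ on $\Omega\setminus\Omega_\e^\lambda$, and Young's inequality with $v_\e\ge\alpha\e$, $\psi(v_\e)\ge\psi(\lambda)$ on $\Omega_\e^\lambda$ — yields \eqref{eqn:uniform bnd on L1 of strain} and \eqref{eqn: L2 bnd}; the same bound forces $|\Omega_\e^\lambda|\le \e\,C/\psi(\lambda)\to0$, hence $v_\e\to1$ a.e. and $v=1$ a.e. Lower semicontinuity of the total variation under $L^1$ convergence gives $u\in BD(\Omega)$ and $Eu_\e\rightharpoonup^*Eu$. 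To upgrade this to $u\in SBD^2$ and identify $e(u_\e)\ca_{\Omega\setminus\Omega_\e^\lambda}\rightharpoonup e(u)$ weakly in $L^2$, I would use the slicing theory for $SBD$ (cf. \cite{BCDM,ACDM}): by Fubini, for a.e. line in any fixed direction $\xi$ the slices $t\mapsto u_\e(y+t\xi)\cdot\xi$ have uniformly bounded one-dimensional energy $\int[\kappa^{-1}\hat v_\e|\partial_t\hat u_\e|^2+\psi(\hat v_\e)/\e]\,\d t$ (because $|e(u_\e)\xi\cdot\xi|\le|e(u_\e)|$), so their $L^1$-limits lie in $SBV^2$ with finitely many jumps; running this over a sufficient family of directions and invoking the $SBD$ closure theorem gives $u\in SBD^2$, while the part of $e(u_\e)$ supported on $\Omega_\e^\lambda$ concentrates on an $\L^n$-negligible set (here $|\Omega_\e^\lambda|\to0$ together with $|\nabla v_\e|\le 1/\e$ is used) and thus does not affect the absolutely continuous part of $Eu$.

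\emph{The bulk lower bound.} Passing to a subsequence realising the $\liminf$ with finite energy, I would first remove the dependence of $F$ on $v_\e$ on $\Omega\setminus\Omega_\e^\lambda$: by Assumption \ref{hyp on F}.4) and Cauchy--Schwarz, $\int_{\Omega\setminus\Omega_\e^\lambda}|F(x,e(u_\e),v_\e)-F(x,e(u_\e),1)|\,\d x\le\|\omega_F(v_\e;1)\|_{L^2(\Omega\setminus\Omega_\e^\lambda)}\,\|e(u_\e)\|_{L^2(\Omega\setminus\Omega_\e^\lambda)}\to0$, since $\omega_F(v_\e;1)\to0$ a.e. (as $v_\e\to1$), is bounded, and $\|e(u_\e)\|_{L^2(\Omega\setminus\Omega_\e^\lambda)}$ is bounded. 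The three remaining integrands $v_\e\A e(u_\e)\cdot e(u_\e)$, $\psi(v_\e)/\e$, $F(x,e(u_\e),1)$ have $\e$-uniform lower bounds ($0$, $0$, and $-\ell|e(u_\e)|$ with $\sup_\e\int_\Omega|e(u_\e)|<\infty$), so subadditivity of $\liminf$ applies and it suffices to treat each piece. For the elastic term, $\sqrt{v_\e}\,\A^{1/2}e(u_\e)\ca_{\Omega\setminus\Omega_\e^\lambda}\rightharpoonup\A^{1/2}e(u)$ in $L^2$ (because $\sqrt{v_\e}\to1$ in every $L^p$, $p<\infty$, and $e(u_\e)\ca_{\Omega\setminus\Omega_\e^\lambda}\rightharpoonup e(u)$), whence weak lower semicontinuity of the $L^2$-norm gives $\liminf\int_{\Omega\setminus\Omega_\e^\lambda}v_\e\A e(u_\e)\cdot e(u_\e)\ge\int_\Omega\A e(u)\cdot e(u)$. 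Since $F(x,0,1)=0$ by the growth bound, $\int_{\Omega\setminus\Omega_\e^\lambda}F(x,e(u_\e),1)\,\d x=\int_\Omega F(x,e(u_\e)\ca_{\Omega\setminus\Omega_\e^\lambda},1)\,\d x$, and convexity of $F(x,\cdot,1)$ makes this a weakly-$L^1$ lower semicontinuous integral (see e.g. \cite{buttazzo1989semicontinuity}), so its $\liminf$ is $\ge\int_\Omega F(x,e(u),1)\,\d x$.

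\emph{The surface lower bound (the hard part).} It remains to show $\liminf_\e\int_{\Omega\setminus\Omega_\e^\lambda}\psi(v_\e)/\e\,\d x\ge 2(h(1)-h(\lambda))\H^{n-1}(J_u)$, and this is where the constraint defining $V_\e$ is indispensable: since $|\nabla v_\e|\le 1/\e$,
\[
\int_{\Omega\setminus\Omega_\e^\lambda}\frac{\psi(v_\e)}{\e}\,\d x\ \ge\ \int_{\{v_\e>\lambda\}}\psi(v_\e)\,|\nabla v_\e|\,\d x\ =\ \int_\lambda^1\psi(s)\,\H^{n-1}(\{v_\e=s\})\,\d s
\]
by the coarea formula. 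By Fatou's lemma in $s$ it is enough to prove $\liminf_\e\H^{n-1}(\{v_\e=s\})\ge 2\H^{n-1}(J_u)$ for a.e. $s\in(\lambda,1)$ — morally the statement that the diffuse crack has two sides, each of area at least $\H^{n-1}(J_u)$. I would prove this by a blow-up at $\H^{n-1}$-a.e. $x_0\in J_u$, where $u$ has an approximate jump with normal $\nu_0$ and $[u](x_0)\ne0$: rescaling $B_\rho(x_0)$ to $B_1$ gives $v_\e^\rho\to1$ a.e. in $B_1$ and $u_\e^\rho\to u^\rho$ with $u^\rho$ $\rho$-close to the pure jump $u_{x_0,[u],\nu_0}$. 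Projecting $\{v_\e^\rho=s\}\cap B_1$ orthogonally onto $\nu_0^\perp$, the projection covers almost all of the unit $(n-1)$-disc with multiplicity $\ge 2$: along $\H^{n-1}$-a.e. line parallel to $\nu_0$, $v_\e^\rho$ exceeds $s$ near both ends of $B_1$ (as $v_\e^\rho\to1$ a.e.) and must dip below $s$ in between, for otherwise a suitable component of $u_\e^\rho$ with symmetrized-gradient–controlled slope would be $H^1$-bounded along that line and could not carry the limiting jump (this re-uses the $SBD$ slicing argument, together with the bound on $\int_{B_1}v_\e^\rho\A e(u_\e^\rho)\cdot e(u_\e^\rho)$). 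The elementary inequality $\H^{n-1}(S)\ge\int\#(S\cap\pi^{-1}(y'))\,\d\H^{n-1}(y')$ for orthogonal projections then gives $\liminf_\e\H^{n-1}(\{v_\e^\rho=s\}\cap B_1)\ge 2(1-o_\rho(1))$ times the measure of the unit $(n-1)$-disc; rescaling back, covering $J_u$ by a disjoint Besicovitch family of such balls, summing, and letting $\rho\to0$ yields the claimed level-set estimate. Adding the lower bounds of the last two paragraphs gives \eqref{eqn: limit part one abs}, and with the first paragraph this proves the Proposition.

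\emph{Expected obstacle.} The a priori bounds and the bulk semicontinuity are routine, resting only on Proposition \ref{prop: bnd energy AT}, standard weak lower semicontinuity, and the $\omega_F$-hypothesis. The genuine difficulty is the sharp surface bound: with no $|\nabla v|^2$ term present, the Modica--Mortola constant must be recovered entirely from the Lipschitz constraint on $v_\e$ through the coarea representation, and the ensuing level-set area estimate $\liminf_\e\H^{n-1}(\{v_\e=s\})\ge 2\H^{n-1}(J_u)$ is delicate precisely at slip-type points of $J_u$ (where $[u]\perp\nu_u$ and only the symmetrized gradient, not the full gradient, is under control). The companion delicate point is excluding a Cantor part of $Eu$ from the one-dimensional slice information, i.e.\ the appeal to the structure/closure theory for $SBD$.
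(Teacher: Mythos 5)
Your skeleton (Proposition \ref{prop: bnd energy AT} to bound $\mathcal{W}_\e$, the splitting into $\Omega_\e^\lambda$ and its complement, removal of the $v$-dependence of $F$ via $\omega_F$ plus the $L^2$ bound, convexity/weak lower semicontinuity for the bulk terms, and the coarea representation of $\frac1\e\int\psi(v_\e)$) matches the paper's. But the two places where you depart from the paper are exactly the places where your argument has genuine gaps. First, the compactness step: your claim that the part of $e(u_\e)$ carried by $\Omega_\e^\lambda$ ``concentrates on an $\L^n$-negligible set and thus does not affect the absolutely continuous part of $Eu$'' is false as a general principle --- a sequence with bounded $L^1$ mass supported on sets of vanishing measure can perfectly well converge weakly-$*$ to an absolutely continuous measure --- and on $\Omega_\e^\lambda$ you only have an $L^1$ bound, not an $L^2$ one. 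The paper avoids this by a truncation device: using the coarea formula and the mean value theorem it selects a level $\lambda_\e\in(\lambda,1)$ at which $\{v_\e\ge\lambda_\e\}$ has uniformly bounded perimeter, sets $\overline u_\e=u_\e\ca_{\Omega\setminus\Omega_\e^{\lambda_\e}}$ (an $SBV\cap L^2$ function by the chain rule, with $\sup_\e\{\int|e(\overline u_\e)|^2+\H^{n-1}(J_{\overline u_\e})\}<\infty$), and then the $SBD^2$ closure/compactness theorem simultaneously yields $u\in SBD^2(\Omega;\R^n)$ and the identification $e(\overline u_\e)\rightharpoonup e(u)$ in $L^2$, which is the fact your bulk lower bound needs. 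Your direct slicing sketch could in principle be made to work, but as written it does not deliver this identification, and the justification you give for it is the wrong mechanism.

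Second, and more seriously, the surface estimate $\liminf_\e\H^{n-1}(\{v_\e=s\}\cap\Omega)\ge 2\H^{n-1}(J_u)$ is not proved by your blow-up/projection argument. Projecting along the approximate normal $\nu_0$ and arguing that $v_\e$ must dip below $s$ on a.e.\ line parallel to $\nu_0$ requires the slice $t\mapsto u_\e(y+t\nu_0)\cdot\nu_0$ to develop the limiting jump; but the symmetrized gradient only controls this normal component, and at slip-type points of $J_u$ (where $[u]\cdot\nu_u=0$ while $[u]\ne0$) that component does not jump, so no crossing of the level $s$ is forced and your multiplicity-two count fails precisely there --- you flag this as ``delicate'' but offer no remedy, so the hard part of the proof is missing. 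The paper does not reprove this estimate: it invokes relation \eqref{settenani}, i.e.\ $\liminf_\e P(\{h(v_\e)\ge t\};\Omega)\ge 2\H^{n-1}(J_u)$ for $t\in(h(\lambda),h(1))$, established in \cite{focardi2014asymptotic} by a slicing argument over a family of directions transversal to the normal combined with a localization/supremum-of-measures step, which is exactly the machinery needed to handle the slip points you mention. Either quote that result, as the paper does, or carry out the multi-directional slicing in full; the single-direction blow-up you propose cannot close the argument.
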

\begin{proof}
Thanks to Proposition \ref{prop: bnd energy AT}, the bound \eqref{eqn: bnd on phase field} implies
	\begin{equation}\label{eqn: bnd w}
	\sup_{\e>0}\{\mathcal{W}_{\e}(u_{\e},v_{\e})\}<+\infty.
	\end{equation}
In particular
	\[
	\int_{\Omega} \psi(v_{\e})\d x \rightarrow 0
	\]
which implies $\psi(v)=0$ a.e. in $\Omega$ and thus $v=1$ a.e. in $\Omega$. Moreover, fix $\lambda\in(0,1)$ and notice that
	\begin{align}
	I_{\e}^1(\lambda)&=\int_{\Omega\setminus \Omega_{\e}^{\lambda}} v \A e(u)\cdot e(u)\d x \geq \lambda \kappa^{-1} \int_{\Omega\setminus \Omega_{\e}^{\lambda}} |e(u)|^2\d x\label{eqn: inside out}
	\end{align}
 and
	\begin{align}
	I_{\e}^3(\lambda)&=\int_{\Omega_{\e}^{\lambda}} \left(v \A e(u)\cdot e(u) +\frac{\psi(v_{\e})}{\e}\right)\d x \geq  \int_{\Omega_{\e}^{\lambda}} \left(\kappa^{-1} \alpha\e|e(u)|^2+\frac{\psi(v_{\e})}{\e}\right)\d x\nonumber \\
	&\geq 2\sqrt{ \alpha }\sqrt{\kappa^{-1}} \int_{\Omega_{\e}^{\lambda}} |e(u)|\sqrt{\psi(v_{\e})}\d x \geq \sqrt{\kappa^{-1} \alpha \psi(\lambda)}\int_{\Omega_{\e}^{\lambda}} |e(u)|\d x.
	  \label{eqn: inside in}
	\end{align}
Inequality \eqref{eqn: inside out}  implies \eqref{eqn: L2 bnd}, while \eqref{eqn: inside in}, provided a further application of Cauchy-Schwarz inequality in \eqref{eqn: inside out}, yields   \eqref{eqn:uniform bnd on L1 of strain}, that in tfurn establishes the  weak compactness in $BD$. Such a compactness in the weak topology of $BD$, together with $u_{\e}\rightarrow u$ in $L^1$, implies $u\in BD(\Omega;\R^n)$. The remaining part of the proof is obtained as a slight variation of the original arguments of \cite{focardi2014asymptotic} extended in such a way as to take into account the nonlinear potential part.\\
\text{}\\
\textbf{Step one:} \textit{proof that $u\in SBD^2(\Omega;\R^n)$.} We start from the fact that
 \begin{align*}
\sup_{\e>0}\{I_{\e}^1(\lambda)+I_{\e}^2(\lambda)+I_{\e}^3(\lambda)\}=\sup_{\e>0}\{\mathcal{W}_{\e}(u_{\e},v_{\e})\}<+\infty,	\end{align*}
which implies a uniform bound in $\e$ on each $I_{\e}^i$ for $i=1,2,3$. Thanks to the co-area formula and to the property of $v_{\e}\in V_{\e}$ (in particular to $|\nabla v_{\e}|<1/\e$) we obtain
	\begin{align*}
	I_{\e}^{2}(\lambda)&=\int_{\Omega\setminus \Omega_{\e}^{\lambda}} \frac{\psi(v_{\e})}{\e}\d x\geq\int_{\Omega\setminus \Omega_{\e}^{\lambda}} |\nabla v_{\e}| \psi(v_{\e}) \d x= \int_{\Omega\setminus \Omega_{\e}^{\lambda}} |\nabla h(v_{\e})| \d x\\
	&= \int_{h(\lambda)}^{h(1)} P(\{ h(v_{\e})> t\}; \Omega) \d t \geq (h(1) - h(\lambda) )P(\{ h(v_{\e})> t_{\e}\}; \Omega),
	\end{align*}
where in the last inequality we considered the mean value theorem to find $t_{\e}\in (h(\lambda),h(1))$. We now set
	\[
	\lambda_{\e}:=h^{-1}(t_{\e})\in (\lambda,1)
	\]
and  observe that
	\[
	P(\Omega\setminus \Omega_{\e}^{\lambda_{\e}};\Omega) \leq I_{\e}^2(\lambda),
	\]
yielding
\[
\sup_{\e>0} \{P(\Omega\setminus \Omega_{\e}^{\lambda_{\e}};\Omega)\}<+\infty.
\]
Consider $\overline{u}_{\e}:=u_{\e} \ca_{\Omega\setminus \Omega_{\e}^{\lambda_{\e}}} $ and notice that, since $v_{\e}\rightarrow 1$ (and thus $|\Omega\setminus \Omega_{\e}^{\lambda_{\e}}|\rightarrow |\Omega|$), we have $\overline{u}_{\e} \rightarrow u$ in $L^1$. Moreover $\overline{u}_{\e} \in SBV(\Omega;\R^n) \cap L^2(\Omega)$ since $|\overline{u}_{\e}|\leq |u_{\e}|\in L^2$ and, due to the chain rule formula \cite[Theorem 3.96]{AFP}
	\[
D\overline{u}_{\e}=\ca_{\Omega\setminus \Omega_{\e}^{\lambda_{\e}}} \nabla u_{\e} \L^n + u_{\e} \otimes \nu_{\pared (\Omega\setminus \Omega_{\e}^{\lambda_{\e}})} \H^{n-1}\llcorner \pared (\Omega\setminus \Omega_{\e}^{\lambda_{\e}}).
	\]
In particular, $\H^{n-1}(J_{\overline{u}_{\e}} \setminus \pared (\Omega\setminus \Omega_{\e}^{\lambda_{\e}}))=0$ and hence
	\[
	\sup_{\e>0}\{\H^{n-1}(J_{\overline{u}_{\e}})\}<+\infty.
	\]
From \eqref{eqn: inside out} we also get that $\overline{u}_{\e}\in SBD^2(\Omega;\R^n)\cap L^2(\Omega)$ with
	\begin{equation}\label{technical}
	\sup_{\e>0} \left\{\int_{\Omega} |e(\overline{u}_{\e})|^2 \d x + \H^{n-1}(J_{\overline{u}_{\e}})\right\}<+\infty.
	\end{equation}
This in particular gives us that $u\in SBD^2(\Omega;\R^n)$ and  
	\begin{equation}\label{techniconv}
	e(\overline{u}_{\e}) \rightharpoonup e(u) \ \ \ \text{weakly in $L^2(\Omega; M_{sim}^{n\times n})$}.
	\end{equation}
\text{}\\
\textbf{Step two:} \textit{proof of \eqref{eqn: limit part one abs}.} Remark that the sequence $\{\lambda_{\e} \}_{\e>0}$ defined above lies in the interval $(\lambda,1)$. In particular $\Omega\setminus \Omega_{\e}^{\lambda_{\e}}\subseteq \Omega\setminus \Omega_{\e}^{\lambda}$ and relation \eqref{techniconv}, due to the convexity of the map $M\mapsto \mathbb{A}M\cdot M$ and to the strong convergence of $v_{\e}$ to $1$ almost everywhere, means that (see for instance \cite[Theorem 2.3.1]{buttazzo1989semicontinuity})
	\begin{align}
	\liminf_{\e\rightarrow 0} \int_{\Omega\setminus \Omega_{\e}^{\lambda}} v_{\e} \mathbb{A}e(u_{\e})\cdot e(u_{\e})\d x 		&\geq \liminf_{\e\rightarrow 0} \int_{\Omega\setminus \Omega_{\e}^{\lambda_{\e}}} v_{\e} \mathbb{A}e(u_{\e})\cdot e(u_{\e})\d x 	\nonumber	\\
	&=\liminf_{\e\rightarrow 0} \int_{\Omega} v_{\e} \mathbb{A}e(\overline{u}_{\e})\cdot e(\overline{u}_{\e})\d x\nonumber\\
		&  \geq \int_{\Omega}   \mathbb{A}e(u)\cdot e(u)\d x\label{eqn:a}.
	\end{align}
Moreover
	\begin{align*}
	 \left|\int_{\Omega\setminus \Omega^{\lambda}_{ \e}} F(x,e(u_{\e}),v_{\e}) \d x - \int_{\Omega\setminus \Omega_{\e}^{\lambda}} F(x,e(u_{\e}),1) \d x\right| &\leq \int_{\Omega\setminus \Omega_{\e}^{\lambda}} \omega_F(v_{\e};1) |e(u_{\e})| \d x \\
	  \left| \int_{\Omega\setminus \Omega_{\e}^{\lambda}} F(x,e(u_{\e}),1) \d x -\int_{\Omega\setminus \Omega_{\e}^{\lambda_{\e}}}F(x,e(u_{\e}),1) \d x  \right| &\leq  \int_{\Omega_{\e}^{\lambda_{\e}}\setminus \Omega_{\e}^{\lambda}} \ell |e(u_{\e})| \d x,
	\end{align*}
where we exploited item 3): $|F(x,M,v)| \leq \ell |M|$ of Assumption \ref{hyp on F}. The above quantities are vanishing (by item  4) of Assumption \ref{hyp on F} on $F$, thanks to the fact that $|\Omega_{\e}^{\lambda_{\e}}\setminus \Omega_{\e}^{\lambda}|\rightarrow 0$ and thanks to \eqref{eqn: L2 bnd}) and hence this fact, together with the convexity of the map $M\rightarrow F(x,M,1)$, implies (using once again \eqref{techniconv} and the semicontinuity Theorem \cite[Theorem 2.3.1]{buttazzo1989semicontinuity})
	\begin{align}
	\liminf_{\e\rightarrow 0}  \int_{\Omega\setminus \Omega_{\e}^{\lambda}} F(x,e(u_{\e}),v_{\e})  \d x   &=\liminf_{\e\rightarrow 0}  \int_{\Omega\setminus \Omega_{\e}^{\lambda_{\e}} }F(x,e(u_{\e}),1)  \d x\nonumber \\
	&\geq  \int_{\Omega} F(x,e(u),1)   \d x\label{eqn:b}.
	\end{align}
To achieve the proof of \eqref{eqn: limit part one abs} we need only to show that
	\[
\liminf_{\e\rightarrow 0}	\int_{\Omega\setminus \Omega_{\e}^{\lambda}} \frac{\psi(v_{\e})}{\e}\d x \geq 2(h(1)-h(\lambda) )\H^{n-1}(J_u).
	\]
In particular we use the fact  that
	\begin{equation}\label{settenani}
	\liminf_{\e\rightarrow 0}  P(\{h(v_{\e}) \geq t\};\Omega) \geq 2\H^{n-1}(J_u) \ \ \ \text{for all $t\in (h(\lambda),h(1))$}
	\end{equation}
proved in \cite{focardi2014asymptotic} via a slicing argument as established also in \cite[25, Lemma 3.2.1]{focardi2002variational}. Relation \eqref{settenani} implies immediately that 
	\begin{align*}
	\int_{\Omega\setminus \Omega_{\e}^{\lambda }} \frac{\psi(v_{\e})}{\e} \d x\geq \int_{h(\lambda)}^{h(1)} P(\{h(v_{\e}) \geq t\};\Omega) \d t\geq 2(h(1)-h(\lambda) ) \H^{n-1}(J_u)
	\end{align*}
leading to
	\begin{equation}\label{eqn:c}
	\liminf_{\e\rightarrow 0} \int_{\Omega\setminus \Omega_{\e}^{\lambda }} \frac{\psi(v_{\e})}{\e} \d x\geq 2 (h(1)-h(\lambda) )\H^{n-1}(J_u).
	\end{equation}
By collecting \eqref{eqn:a}, \eqref{eqn:b} and \eqref{eqn:c} we deduce \eqref{eqn: limit part one abs}.
\end{proof}
We now provide the liminf inequality for the (asymptotically equivalent) remaining part of the energy on $\Omega\setminus \Omega_{\e}^{\lambda}$. In order to do so, we will need to apply Proposition \ref{prop:limit of convex}, stated in the Appendix, that is a well-known approach (inspired by \cite{buttazzo1985integral}) when dealing with local functionals. We will also use the blow-up technique originally designed in \cite{FonsecaMuellerBLOWUP}.

\begin{proposition}\label{propo potential}
Let $(u_{\e},v_{\e})\in H^1(\Omega;\R^n)\times V_{\e}$ be such that 	
	\[
	u_{\e}\rightarrow u, \ \ v_{\e}\rightarrow v \ \ \ \text{in $L^1$}
	\]
and with
	\begin{equation}\label{eqn: bnd on phase field2}
	\sup_{\e>0}\{\F_{\e}(u_{\e},v_{\e})\}<+\infty.
	\end{equation}
Then, for every $\lambda\in (0,1)$ it holds
	\begin{equation}\label{eqn: limit part two sing}
	\begin{split}
	\liminf_{\e\rightarrow 0}\int_{\Omega_{\e}^{\lambda}} [2 &\sqrt{\alpha \psi(0)}\sqrt{\mathbb{A} e(u_{\e})\cdot e(u_{\e})} + F(x,e(u_{\e}),0) ]\d x \\
	&\geq  \int_{J_u}[ 2 \sqrt{\alpha \psi(0)}\sqrt{\mathbb{A} [u]\odot \nu \cdot [u]\odot \nu} + F_{\infty}(z,[u]\odot \nu,0)  ] \d \H^{n-1}(z).
	\end{split}
		\end{equation}
\end{proposition}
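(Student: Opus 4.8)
The plan is to read the left-hand side of \eqref{eqn: limit part two sing} as a convex, linear-growth functional of the strains $e(u_\e)$ \emph{localized} on the degenerate set $\Omega_\e^{\lambda}$, to identify the weak-$*$ limit of those strain measures with the jump part of $Eu$, and then to apply the lower semicontinuity theorem for convex functionals of linear growth (Proposition~\ref{prop:limit of convex}), which bounds the liminf from below by the bulk integral of the integrand plus the integral of its recession function against the singular part of the limit measure, and whose proof is carried out by the Fonseca--Müller blow-up. Write
	\[
	g(x,M):=2\sqrt{\alpha\psi(0)}\,\sqrt{\A M\cdot M}+F(x,M,0).
	\]
By item 2) of Assumption~\ref{hyp on F} and the positive-definiteness of $\A$, $g(x,\cdot)$ is convex (a sum of a norm and a convex function); since $F(\cdot,M,0)$ is continuous (item 1)), $F(x,\cdot,0)$ is convex, and $|F(x,M,0)|\le\ell|M|$ forces $F(x,\cdot,0)$ to be locally Lipschitz in $M$ with an $x$-independent constant (Remark~\ref{rmk: Lipschitz function}), the function $g$ is jointly continuous on $\Omega\times M^{n\times n}_{sym}$. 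Moreover $F(x,0,0)=0$, so $g(x,0)=0$, and $c_0|M|\le g(x,M)\le C_0|M|$ with $c_0:=2\sqrt{\alpha\psi(0)/\kappa}-\sigma>0$ precisely because of the rightmost inequality in \eqref{bound on p}. The recession function of $g$ in $M$ is $g_\infty(x,M)=2\sqrt{\alpha\psi(0)}\,\sqrt{\A M\cdot M}+F_\infty(x,M)$, the first summand being already positively $1$-homogeneous; it is convex and $1$-homogeneous in $M$ and, being $\sup_{t>0}t^{-1}g(x,tM)$, jointly lower semicontinuous.

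Next I would identify the relevant measures. Passing to a (not relabelled) subsequence that makes the left-hand side of \eqref{eqn: limit part two sing} a finite limit, estimate \eqref{eqn: inside in} gives $\sup_\e\int_{\Omega_\e^{\lambda}}|e(u_\e)|\,\d x<\infty$, so the nonnegative measures $\mu_\e:=g(x,e(u_\e))\,\L^n\llcorner\Omega_\e^{\lambda}$ (of mass $\le C_0\int_{\Omega_\e^{\lambda}}|e(u_\e)|$) converge, along a further subsequence, weakly-$*$ to some $\mu\ge0$ in $\M(\Omega)$. By Proposition~\ref{prop: information on convergence} (and the fact that $|\Omega_\e^{\lambda}|\to0$) one has $u\in SBD^2(\Omega;\R^n)$, $v=1$ a.e., $e(u_\e)\L^n\rightharpoonup^* Eu=e(u)\L^n+[u]\odot\nu_u\,\H^{n-1}\llcorner J_u$, and $e(u_\e)\,\ca_{\Omega\setminus\Omega_\e^{\lambda}}\rightharpoonup e(u)$ weakly in $L^2(\Omega)$, hence also weakly-$*$ as measures; therefore
	\[
	e(u_\e)\,\ca_{\Omega_\e^{\lambda}}\,\L^n=e(u_\e)\,\L^n-e(u_\e)\,\ca_{\Omega\setminus\Omega_\e^{\lambda}}\,\L^n\ \rightharpoonup^*\ [u]\odot\nu_u\,\H^{n-1}\llcorner J_u .
	\]
Now Proposition~\ref{prop:limit of convex}, applied to the convex, jointly continuous, nonnegative linear-growth integrand $g$ and to the weakly-$*$ converging measures just displayed (whose limit is purely singular), yields
	\[
	\liminf_{\e\to0}\int_{\Omega_\e^{\lambda}}g(x,e(u_\e))\,\d x\ \ge\ \int_{\Omega}g(x,0)\,\d x+\int_{J_u}g_\infty(z,[u]\odot\nu)\,\d\H^{n-1}=\int_{J_u}g_\infty(z,[u]\odot\nu)\,\d\H^{n-1},
	\]
the bulk term vanishing since $g(x,0)=0$; unravelling the definitions of $g$ and $g_\infty$, this is exactly \eqref{eqn: limit part two sing}.

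The main obstacle is Proposition~\ref{prop:limit of convex} itself, i.e.\ the Fonseca--Müller blow-up establishing this lower semicontinuity for a convex linear-growth integrand. The delicate points there are: (i) the simultaneous handling of the two limits $\e\to0$ (recovering $Eu$ at the macroscopic scale) and $\rho\to0$ (blowing up the jump point $z_0\in J_u$ to a half-space datum $[u](z_0)\odot\nu(z_0)$), organized through a diagonal sequence while keeping the uniform $L^1$-bound on the strains over the vanishing-measure sets $\Omega_\e^{\lambda}$; and (ii) the freezing of the $x$-dependence of $g$, resp.\ $g_\infty$, at $z_0$ — available only through continuity, resp.\ lower semicontinuity — which is where the joint continuity of $F(\cdot,\cdot,0)$ from item 1) of Assumption~\ref{hyp on F} and the resulting joint lower semicontinuity of $g_\infty$ are used, the passage from $g$ to $g_\infty$ occurring at the rescaled scale through the very definition of the recession function. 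Verifying the hypotheses of Proposition~\ref{prop:limit of convex} for our $g$ — in particular the strict positivity $c_0>0$, which uses exactly the upper bound on $\sigma$ in \eqref{bound on p} — and the identification of the weak-$*$ limit of $e(u_\e)\ca_{\Omega_\e^{\lambda}}\L^n$ carried out above are then the only remaining, and routine, ingredients.
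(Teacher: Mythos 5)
Your argument is essentially correct, but it follows a genuinely different route from the paper's. The paper applies Proposition~\ref{prop:limit of convex} only to the \emph{unrestricted} strains $e(u_\e)$ on balls $B_r(z)$, and then handles the restriction to $\Omega_\e^{\lambda}$ by a blow-up/density comparison: it introduces the two measures $\mu_k=G(x,e(u_{\e_k}))\L^n\llcorner\Omega_{\e_k}^{\lambda}$ and $\xi_k=G(x,e(u_{\e_k}))\L^n$, and shows that their weak-$*$ limits have the same $(n-1)$-dimensional density at $\H^{n-1}$-a.e.\ point of $J_u$, because the contribution of $(\Omega\setminus\Omega_{\e_k}^{\lambda})\cap B_r(z)$ is controlled, via Cauchy--Schwarz and the local bound on $\mathcal{W}_{\e_k}$, by $Cr^{1/2}\bigl(r^{1-n}\mathcal{W}_{\e_k}(u_{\e_k},v_{\e_k};B_r(z))\bigr)^{1/2}$, which has vanishing density unless the energy density is already infinite; the conclusion then follows from a standard density estimate for $\mu$ on $J_u$. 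You instead bypass the blow-up by identifying the weak-$*$ limit of the restricted strain measures $e(u_\e)\ca_{\Omega_\e^{\lambda}}\L^n$ with the pure jump measure $[u]\odot\nu\,\H^{n-1}\llcorner J_u$ and then invoking the convex lower-semicontinuity result directly on those measures, the bulk term being killed by $g(x,0)=0$. This is shorter and avoids densities altogether, at the price of two points that need to be made explicit. First, Proposition~\ref{prop:limit of convex} \emph{as stated} does not apply: its hypotheses are $u_\e\in H^1$, $u_\e\to u$ in $L^1$, and its conclusion concerns $\int_A G(x,e(u_\e))$ with limit measure $Eu$; your fields $e(u_\e)\ca_{\Omega_\e^{\lambda}}$ are not strains of $H^1$ functions and the limit measure is not $Eu$. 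What you actually need is the measure-theoretic version (for $f_\e\L^n\rightharpoonup^*\mu$ with the lower bound expressed through the Lebesgue decomposition of $\mu$); this is indeed available, either as the classical result in \cite{buttazzo1989semicontinuity,AFP} or by observing that the paper's appendix proof of Proposition~\ref{prop:limit of convex} (affine decomposition of $G$ plus Lemma~\ref{lem:sup}) uses nothing but the weak-$*$ convergence of the strain measures and so carries over verbatim -- note in passing that the paper proves it by convex duality, not by the Fonseca--M\"uller blow-up you sketch. Second, the weak $L^2$ convergence $e(u_\e)\ca_{\Omega\setminus\Omega_\e^{\lambda}}\rightharpoonup e(u)$ is not part of the statement of Proposition~\ref{prop: information on convergence}: it is established inside its proof at the modified level $\lambda_\e$ (relation \eqref{techniconv}) and appears as a claim of Theorem~\ref{main thm: comp}; to use it at the fixed level $\lambda$ you should add the short interpolation step via \eqref{eqn: L2 bnd} and the vanishing measure of $\Omega_\e^{\lambda_\e}\setminus\Omega_\e^{\lambda}$. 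With these two clarifications your proof closes, and it is arguably more economical than the paper's; what the paper's blow-up buys is that it never needs to identify the weak limit of the truncated strains, only to compare densities of the two energy measures.
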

\begin{proof}
Set $G:\R^n \times M_{sym}^{n\times n}\rightarrow \R^+$ to be
	\[
	G(x,M)=2\sqrt{\alpha} \sqrt{\psi(0)}\sqrt{\mathbb{A} M \cdot M}+F(x,M,0)
	\]
and notice, by the hypothesis on $F$, that $G(x,\cdot)$ is a positive convex functions on $M_{sym}^{n\times n}$ for any $x\in \Omega$. In particular $G$ satisfies the hypothesis of Proposition \ref{prop:limit of convex} and thus
	\begin{equation}\label{eqn:full limit}
	\liminf_{\e\rightarrow 0} \int_{B_r(z)} G(x,e(u_{\e})) \d x \geq  \int_{B_r(z)} G(x,e(u) ) \d x + \int_{J_u\cap B_r(z)} G_{\infty}(z,[u]\odot \nu) \d \H^{n-1}(z)
	\end{equation}
for any $B_r(z)\subset \Omega$. Let $\e_k$ be the sequence achieving 
	\[
	\liminf_{\e\rightarrow 0} \int_{\Omega_{\e}^{\lambda} } G(x,e(u_{\e}))\d x=\lim_{k\rightarrow+\infty} \int_{\Omega_{\e_{k}}^{\lambda} } G(x,e(u_{\e_{k}}))\d x
	\]
and set
	\begin{align*}
	\mu_{k}(A)&:=\int_{A\cap \Omega_{\e_k}^{\lambda}} G(x,e(u_{\e_k}) )\d x,\\
	\xi_{k}(A)&:=\int_{A} G(x,e(u_{\e_k}) )\d x\\
% \zeta_{k}(A)&:=\int_{A} \left[ v_{\e_k} \mathbb{A}e(u_{\e_k}) \cdot e(u_{\e_k}) + \frac{\psi(v_{\e_k})}{\e_k} - p(x,e(u_{\e_k}),v_{\e_k}) \dive(u_{\e_k})\right] \d x.
	\end{align*}
Notice that, due to the uniform bound on the energy $\F_{\e}$ we have
	\[
	\sup_{\e}\{ \mu_{k}(\Omega)\} < +\infty,  \ \ \ \ \ 	\sup_{\e}\{ \xi_{k}(\Omega)\} < +\infty, 
	%\sup_{\e}\{ \zeta_{k}(\Omega)\} < +\infty
	\]
and thus, up to a subsequence (not relabeled), we can find Radon measures $\mu,\xi$  such that
	\[
	\mu_{k} \rightharpoonup^* \mu , \ \ \ \xi_{k} \rightharpoonup^* \xi.
	%\zeta_k \rightharpoonup^* \zeta.
	\]
\text{}\\
\textbf{Step one:} 
We assert that the proof of \eqref{eqn: limit part two sing} follows easily from the following fact:
	\begin{equation}\label{key fact}
	\lim_{r\rightarrow 0} \frac{\mu(B_r(z))}{r^{n-1}} =\lim_{r\rightarrow 0} \frac{\xi(B_r(z))}{r^{n-1}}.
	\end{equation}
Indeed, by assuming the validity of \eqref{key fact} we conclude that, for $\L^1-$a.e. $r>0$ it holds (because of \eqref{eqn:full limit})
	\begin{align*}
	\frac{\xi(B_r(z)}{r^{n-1}}=\frac{1}{r^{n-1}}\lim_{k\rightarrow +\infty} \frac{\xi_{k}(B_r(z)) }{r^{n-1}} \geq \frac{1}{r^{n-1}} \int_{J_u \cap B_r(z)} G_{\infty}(x,[u]\odot \nu) \d \H^{n-1}(z),
	\end{align*}
implying
	\[
	\lim_{r\rightarrow 0}\frac{\mu(B_r(z))}{r^{n-1}}=\lim_{r\rightarrow 0}\frac{\xi(B_r(z))}{r^{n-1}} \geq G_{\infty}(z,[u](z)\odot \nu(z))
	\]
for $\H^{n-1}$-a.e. $z\in J_u$. This gives
	\[
	\mu(A)\geq \int_{J_u\cap A} G_{\infty}(z,[u]\odot \nu) \d \H^{n-1}(z)
	\]
and since 
	\[
	G_{\infty}(x,M)=2\sqrt{\alpha \psi(0)}\sqrt{\mathbb{A} M \cdot M} + F_{\infty}(z,M)
	\]
we obtain \eqref{eqn: limit part two sing}.\\
\text{}\\
\textbf{Step two:} 
Let us focus on \eqref{key fact}. It suffices to check that 
$$
\lim_{r\rightarrow 0} \liminf_{k\rightarrow +\infty} \frac{1}{r^{n-1}}\int_{B_r(z)\cap( \Omega\setminus\Omega_{\e_k}^{\lambda})} G(x,e(u_{\e_k}) )\d x=0
.$$
Set $\tau=\max\{\sigma, \ell\}$. Then, clearly,
	\begin{align*}
	\left| \int_{(\Omega \setminus \Omega_{\e_k}^{\lambda})\cap B_r(z) } G(x,e(u_{\e_k}) \d x \right| &\leq \int_{(\Omega \setminus \Omega_{\e_k}^{\lambda})\cap B_r(z)} (2\sqrt{\alpha \psi(0)\kappa } +\tau) |e(u_{\e_k})|\d x \\
	&\leq (2\sqrt{\alpha \psi(0)\kappa } +\tau) \int_{(\Omega \setminus \Omega_{\e_k}^{\lambda})\cap B_r(z)} |e(u_{\e_k})|\d x \\
	&\leq C|B_r(z)|^{1/2} \left(\int_{\Omega \setminus \Omega_{\e_k}^{\lambda}\cap B_r(z)} |e(u_{\e_k})|^2\d x\right)^{1/2}.
	\end{align*}
Thus
\begin{align*}
	\frac{1}{r^{n-1}}\left| \int_{(\Omega \setminus \Omega_{\e_k}^{\lambda})\cap B_r(z) } G(x,e(u_{\e_k}) \d x \right|&\leq C r^{1/2} \left(\frac{1}{r^{n-1}}\int_{(\Omega\setminus\Omega_{\e}^{\lambda}) \cap B_r(z)} |e(u_{\e_k})|^2\d x\right)^{1/2}\\
	&\leq \frac{C r^{1/2}}{\lambda^{1/2}}  \left(\frac{1}{r^{n-1}}\int_{\Omega \cap B_r(z)} v_{\e_k} |e(u_{\e_k})|^2\d x\right)^{1/2}\\
		&\leq \frac{C r^{1/2}}{\lambda^{1/2}}  \left(\frac{1}{r^{n-1}}\int_{\Omega \cap B_r(z)}\left[ v_{\e_k} |e(u_{\e_k})|^2 + \frac{\psi(v_{\e_k})}{\e_k}\right] \d x\right)^{1/2}\\
		&=\frac{C r^{1/2}}{\lambda^{1/2}}  \left(\frac{1}{r^{n-1}}\mathcal{W}_{\e_k}(u_{\e_k},v_{\e_k};B_r(z)) \right)^{1/2}.
	\end{align*}
By virtue of Proposition \ref{prop: bnd energy AT}, if
	\[
	\lim_{r\rightarrow 0} \liminf_{k\rightarrow +\infty}  \frac{\mathcal{W}_{\e_k}(u_{\e_k},v_{\e_k};B_r(z))}{r^{n-1}} =+\infty
	\]
then
	\[
	\lim_{r\rightarrow 0} \liminf_{\e\rightarrow 0} \frac{\mathcal{F}(u_{\e},v_{\e};B_r(z))}{r^{n-1}}  =+\infty,
	\]
which means that the $(n-1)$-dimensional density of the liminf lower bound is $+\infty$ and there is nothing to prove. Conversely, it holds
	\[
	\lim_{r\rightarrow 0} \liminf_{k\rightarrow +\infty} \frac{1}{r^{n-1}}\left| \int_{(\Omega \setminus \Omega_{\e_k}^{\lambda})\cap B_r(z) } G(x,e(u_{\e_k}) \d x \right|=0,
	\]
yielding \eqref{key fact}, thence completing the proof.
\end{proof}
We are now ready to proceed to the proof of Theorem \ref{thm: liminf inequality}.

\begin{proof}[Proof of Theorem \ref{thm: liminf inequality}]
Let $(u_{\e},v_{\e})\in X\times V_{\e} $ with  $u_{\e}\rightarrow u$ and $v_{\e}\rightarrow v$ in $L^1$. \\

We can easily assume that $\sup_{\e} \{\F_{\e}(u_{\e},v_{\e})\} <+\infty$ (otherwise there is nothing to prove). Let $\lambda\in (0,1)$ to be chosen later and apply Proposition \ref{prop: information on convergence} to deduce that $v=1$ $\L^n$-a.e. in $\Omega$, $u\in SBD^2(\Omega)$ and to conclude that \eqref{eqn: limit part one abs} and \eqref{eqn: L2 bnd} are in force. Thus
\begin{equation}\label{h1}
	\begin{split}
	\liminf_{\e\rightarrow 0} \int_{\Omega\setminus \Omega_{\e}^{\lambda}}&\left[ v_{\e} \mathbb{A}e(u_{\e})\cdot e(u_{\e}) +\frac{\psi(v_{\e})}{\e}+F(x,e(u_{\e}),v_{\e}) \right] \d x\\
	& \geq \int_{\Omega} \left[\mathbb{A} e(u)\cdot e(u)+F(x,e(u),1)   \right]\d x + 2(h(1)-h(\lambda))\H^{n-1}(J_u).
	\end{split}
\end{equation}
 By writing
\begin{equation}\label{hh2}
\begin{split}
\F_{\e}(u_{\e},v_{\e}) \geq&  \int_{\Omega\setminus \Omega_{\e}^{\lambda }}\left[ v_{\e} \mathbb{A}e(u_{\e})\cdot e(u_{\e}) +\frac{\psi(v_{\e})}{\e} + F(x,e(u_{\e}),v_{\e}) \right] \d x\\
&+ \int_{\Omega_{\e}^{\lambda }} \left[ v_{\e} \mathbb{A}e(u_{\e})\cdot e(u_{\e})+\frac{\psi(v_{\e})}{\e} +F(x,e(u_{\e}),v_{\e}) \right] \d x,
\end{split}
\end{equation}
it is readily seen that it suffices to focus on the second addendum in the right-hand side of \eqref{hh2}, denoted as	$\mathcal{G}_{\e}(u_{\e},v_{\e};\lambda)$, which by Cauchy-Schwarz inequality yields
	\begin{align*}
	\mathcal{G}_{\e}(u_{\e},v_{\e})\geq \int_{\Omega_{\e}^{\lambda}} \left[2\sqrt{\alpha} \sqrt{\mathbb{A} e(u_{\e}) \cdot e(u_{\e})} \sqrt{\psi(v_{\e})} + F(x,e(u_{\e}),v_{\e}) \right] \d x.
	\end{align*}
Since $\psi(s)\rightarrow \psi(0)$ and $\omega_F(s;0) \rightarrow 0$, for some $\lambda_{\delta}$ we have that
$	|\sqrt{\psi(s)} -\sqrt{\psi(0)}| + \omega_F(s;0) \leq \delta \ \ \  \text{for all $s<\lambda_{\delta}$}$. Thus, for a suitably small $\lambda$, we have
	\begin{align*}
\left|\int_{\Omega_{\e}^{\lambda}}2\sqrt{\alpha}\sqrt{\mathbb{A} e(u_{\e}) \cdot e(u_{\e})} (\sqrt{\psi(v_{\e})} -  \sqrt{\psi(0)}) \d x	\right| & \leq 2 \delta \sqrt{\kappa \alpha}   \int_{\Omega_{\e}^{\lambda}} |e(u_{\e})|\d x
	\end{align*}
	and
	\begin{align*}
\left|\int_{\Omega_{\e}^{\lambda}} [F(x,e(u_{\e}), v_{\e}) - F(x,e(u_{\e}),0)] \d x	\right| & \leq \int_{\Omega_{\e}^{\lambda}} \omega_F(v_{\e};0) |e(u_{\e})| \d x\leq \delta \int_{\Omega_{\e}^{\lambda}} |e(u_{\e})|\d x.
	\end{align*}
In particular, according to \eqref{eqn:uniform bnd on L1 of strain}, we reach
	\begin{equation*}
	\begin{split}
	\lim_{\e\rightarrow 0}\left|\int_{\Omega_{\e}^{\lambda}}[2\sqrt{\alpha}\sqrt{\mathbb{A} e(u_{\e}) \cdot e(u_{\e})} (\sqrt{\psi(v_{\e})} -\sqrt{\psi(0)}) \d x \right|& \\
	  +\lim_{\e\rightarrow 0} \left |\int_{\Omega_{\e}^{\lambda}} F(x,e(u_{\e}), v_{\e}) - F(x,e(u_{\e}),0) ]\d x	\right|&\leq \delta C
	\end{split}
	\end{equation*}
where $C$ is a constant depending on $\lambda$ and on the sequence $u_{\e}$ only.  In particular, we have
\begin{align*}
	\liminf_{\e\rightarrow 0} \mathcal{G}_{\e}(u_{\e},v_{\e}) &\geq -\delta C+ \liminf_{\e\rightarrow 0} \int_{\Omega_{\e}^{\lambda}} \left[2\sqrt{\alpha \psi(0)}\sqrt{\mathbb{A} e(u_{\e}) \cdot e(u_{\e})}  +F(x,e(u_{\e}),0)   \right]\d x.	
	\end{align*}
By applying Proposition \ref{propo potential}, and in particular relation \eqref{eqn: limit part two sing}, we get
	\begin{align}\label{topolinogigi}
	\liminf_{\e\rightarrow 0} \mathcal{G}_{\e}(u_{\e},v_{\e}) &\geq -\delta C+ \int_{J_u} \left[2\sqrt{\alpha \psi(0)}\sqrt{\mathbb{A} [u]\odot \nu \cdot [u]\odot \nu}  +F_{\infty}(x,[u]\odot \nu,0)   \right]\d x.	
	\end{align}
Summarizing, we have shown that for any $\delta>0$ there exists a $\lambda_{\delta}$ such that, if $\lambda\leq \lambda_{\delta}$, then \eqref{topolinogigi} holds true. Moreover \eqref{h1} is in force for every $\lambda\in (0,1)$. Thus, for any $\delta>0$ it must holds
		\[
		\liminf_{\e\rightarrow 0} \F_{\e}(u_{\e},v_{\e})\geq -\delta C+ \F(u,v),
		\]	
that, by taking the limit as $\delta\rightarrow 0$, achieves the proof.
\end{proof}

\section{Limsup inequality}\label{sct Limsup}
This section is entirely devoted to the construction of a recovery sequence. We first show how to recover the energy on a special class of function $\Cl$ and then we show, with a density argument, that each function $u\in SBD^2(\Omega;\R^n)$ can be recovered. Let us define
	\begin{equation}
	\Cl:=\left\{
	\begin{array}{r}
	u\in SBV^2(\Omega;\R^n)\cap L^{\infty}(\Omega;\R^n) \cap W^{m,\infty}(\Omega\setminus \ov{J_u}; \R^n),\\
 \text{for all $m\in \N$} \\
 \text{}\\
	\text{where $\ov{J_{u}}\cap \Omega $ is the finite union $S$ of closed,} \\
	\text{pairwise disjoint $(n-1)$-dimensional simplexes}\\
	\text{ intersected with $\Omega$ and $\H^{n-1}((\ov{J_u}\cap \Omega)\setminus J_u)=0$.}
		\end{array}
		 \right\}.
	\end{equation}
\subsection{Recovery sequence in $\Cl $} Consider $u\in \Cl$ and fix once and for all a unitary vector field $\nu=\nu_u$ which is normal $\H^{n-1}$-a.e. to $K=\ov{J_u}\cap \Omega$. Notice that, since $J_{u}$ is the finite union of closed and pairwise disjoint $(n-1)$-dimensional simplexes, then the point where $\nu$ is not well-defined is a set of dimension at most $n-2$. The projection operator $P:\Omega \rightarrow K$ is well defined almost everywhere around a small tubular neighborhood $T\subset \Omega $ of $K$ and thus we can consider, for points in $T$, the \textit{signed distance}
\[
\dist(x,K)= (x-\ov{x} )\cdot \nu( \ov{x} ), \ \ \ \ov{x}=P(x).
\]
We consider a normal extension of $\nu$ on $T$. We now introduce the recovery sequence. Set $\vartheta: K \rightarrow \R$, a function such that 
	\[
	\vartheta \in W^{1,\infty}(K;\R,\H^{n-1})\cap L^{\infty}(K;\R,\H^{n-1}), \ \ \vartheta>0 \ \text{ on $J_u$},
	\]
to be chosen later. We also require that $\vartheta(x)=0$ for all $x\in K\setminus J_u$. For any $\e>0$ small enough, consider the set defined as
	\begin{align*}
	A_{\vartheta \e}&:=\{y+t\nu(y) \ | \ y\in J_u, \ \ t\in (-\vartheta(y)\e, \vartheta(y)\e)\}.
	\end{align*}
Notice that up to choose $\e$ small enough it is not restrictive to assume that $A_{\vartheta\e}$  has finitely many disconnected component well separated one from another, each of which is part of a tubular neighborhood of an $(n-1)$-dimensional hyperplane (see Figure \ref{pic:disegno}). Indeed, as explained briefly in Remark \ref{trick}, up to carefully removing the singularity of the simplex where $J_u$ lives and extending $u$ smoothly on the cut (or by arguing just in the case where $\Omega$ is a cube and the jump set is an hyperplane as it is done in \cite{focardi2014asymptotic}), we obtain (asymptotically) the same result. Note that this  machinery would only make the computations heavier without adding any relevant generality to our proof; thus we will avoid it. With the same carefulness (or by suitably modify the construction provided by Theorem \ref{thm CT}, see \cite[Remark 3]{focardi2014asymptotic}), it is not restrictive to assume also $K=\ov{J_u}\cap \Omega \subset \Omega$. 
\begin{remark}\label{trick}
\begin{figure}\label{pic:disegno}
\includegraphics[scale=0.7]{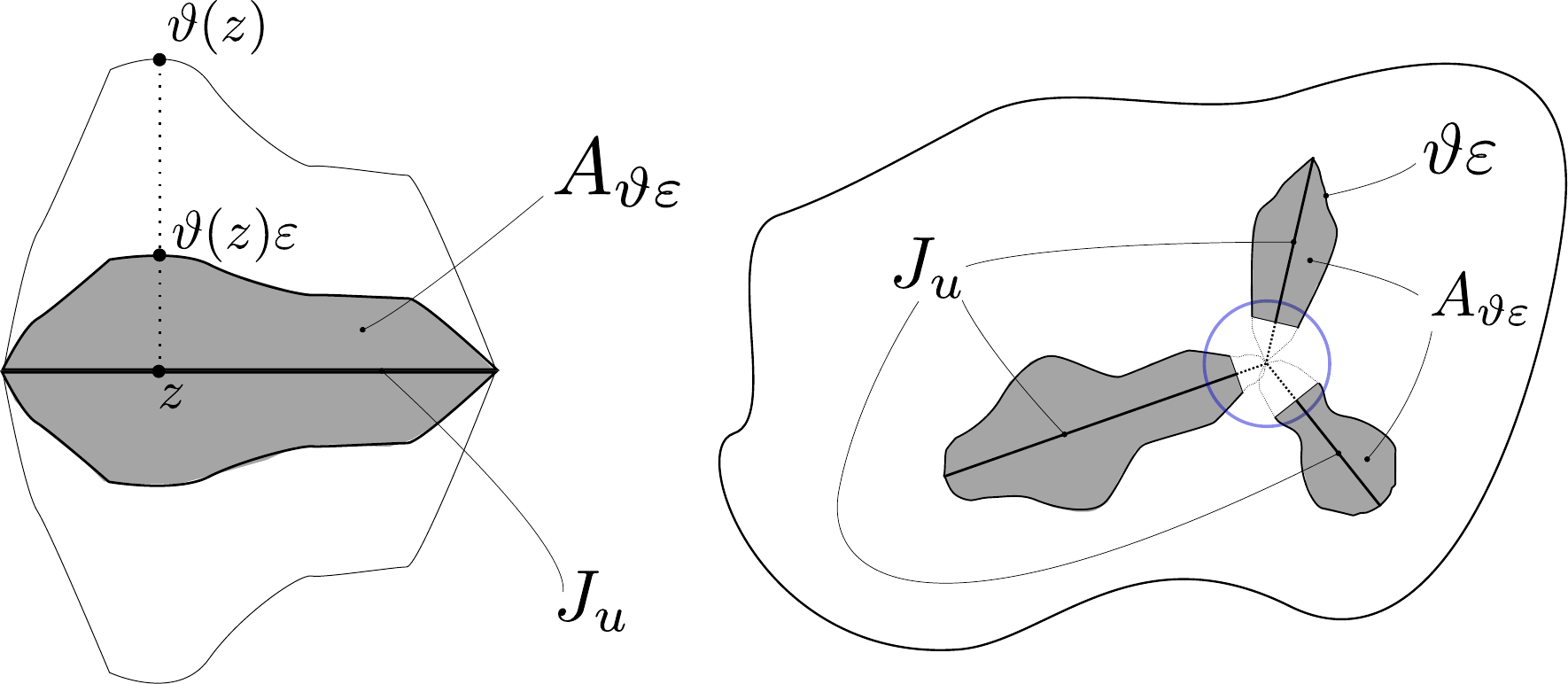}\caption{In grey is depicted the set $A_{\vartheta \e}$. In order to avoid overlaps, since the function $u_{\e}$ has been defined outside the blue ball of size $\e$, we can extend it and sew up everything together inside such a region by exploiting a capacitary argument as briefly sketched in Remarks \ref{trick} and \ref{Reg}. In particular we can always assume that the pieces of the set $A_{\vartheta \e}$, on each branches of $J_u$, do not overlap. In order to alleviate the notations we are neglecting this correction.}
\end{figure}
\rm{Let $H_1,H_2$ be two hyperplanes such that $J_{u}\subset H_1\cap H_2$. Then consider the tubular neighborhood given by the Minkowski sum $T_{1,2}(\e):= H_1\cap H_2+B_{\e}$. Assume that we are able to define our recovery sequence $u_{\e},v_{\e}$ for any $x\in \Omega \setminus T_{1,2}(\e)$. Then we can extend it to an $H^1(\Omega;\R^n)\times V_{\e}$ pair $(u_{\e},v_{\e})$ on $\Omega$ through the solution of the 2-capacity problem in $T_{1,2}(\e)$ (see \cite[proof of Corollary 3.11]{cortesani1997strong}). In particular the contribution to the energy of the pairs $(u_{\e},v_{\e})$ on the set $T_{1,2}(\e)$ is given by
	\begin{align*}
	\F_{\e}(u_{\e},v_{\e};T_{1,2}(\e))=&\int_{T_{1,2}(\e)}\left[\mathbb{A}e(u_{\e}\cdot e(u_{\e}) +F(x,e(u_{\e},v_{\e}) +\frac{\psi(v_{\e})}{\e}\right]\d x\\
	&\leq C \left[\int_{T_{1,2}(\e)}|\nabla u_{\e}|^2\d x +\frac{|T_{1,2}(\e)|}{\e}\right].
	\end{align*}
Since the $2$-capacity of $H_1\cap H_2$ in $T_{1,2}(\e)$ is $0$ and since $|T_{1,2}(\e)|/\e\rightarrow 0$ as $\e$ approaches $0$ we can conclude that the contribution to the energy of such pairs, on the set $T_{1,2}(\e)$, is asymptotically negligible. For this reason in the sequel we will assume without loss of generality that the jump set is always contained in the pairwise disjoint union of pieces of hyperplane (see Figure \ref{pic:disegno}).
}
\end{remark}
% In particular, we are assuming implicitely that $K\subset H$ where $H$ is an $(n-1)$ dimensional hyperplane. 
Having in mind this additional assumption on the jump set, we define the following functions
\begin{equation}\label{eqn: rcvry v regular}
v_{\e}(x)=
\left\{
\begin{array}{ll}
\displaystyle  1 & \text{if $x\notin A_{(\vartheta+1)\e} $}\\
\displaystyle  \left(\frac{1-\alpha\e}{\e }\right) |\dist (x,J_u)| -\vartheta(\ov{x}) +(1+\vartheta(\ov{x}))\alpha\e & \text{if $x \in  A_{(\vartheta+1)\e} \setminus A_{\vartheta\e} $}\\
\displaystyle \alpha\e & \text{if $ x\in A_{\vartheta \e} $}
\end{array}
\right.
\end{equation}
and
\begin{equation}\label{eqn: rcvry u regular}
u_{\e}(x)=
\left\{
\begin{array}{ll}
\displaystyle  u & \text{if $x\notin A_{\vartheta \e} $}\\
\text{}\\
\begin{array}{rr}
\displaystyle   \left(\frac{u(\ov{x}+\vartheta(\ov{x})\e \nu) - u(\ov{x}-\vartheta(\ov{x})\e \nu)  }{2\vartheta(\ov{x})\e}\right) \dist(x,Ju) &  \\
\text{}\\
 \displaystyle \ \ \ \ \ \ \ \  + \frac{u(\ov{x}+\vartheta(\ov{x})\e \nu) + u(\ov{x}-\vartheta(\ov{x})\e \nu) }{2} 
 \end{array}
 & \text{if $x \in A_{\vartheta\e}$}
\end{array}
\right..
\end{equation}
\begin{remark}[On the regularity of $(u_\e,v_\e)$]\label{Reg}
When $x$ approaches $\ov{J_u}\setminus J_u$ we have $u_{\e}(x)=u(x)$ and thus we can conclude $u_{\e}\in W^{1,\infty}(\Omega;\R^n)$. On the other hand, we see that $v_{\e}$ might present a jump on the lines 
    \[
    \{y+t\nu \ | \ y\in \ov{J_u}\setminus J_u, \ t\in (-\e, \e)\}    \]
where $\vartheta(y)=0$.  To overcome this problem we can argue as follows. 
As a consequence of \cite[Corollary 3.11, Assertion ii")]{cortesani1997strong}
we can claim that the better regularity of the jump set of $u$ ensures that 
 $\H^{n-2}(\ov{J_u}\setminus J_u)<+\infty$ and thus, for every $\e>0$ we can cover such a set with a finite number $N_{\e}$ of balls $B_{k}(\e)$ of radius $\e$ such that
$\lim_{\e \rightarrow 0} N_{\e}\e^{n-1}=0$.

Moreover, we can find a function $\zeta_{\e}$ such that $\zeta_{\e}=1$ outside $\Sigma_{\e}:=\bigcup_{k=1}^{N_{\e}} B_{k}(2\e)$, $|\nabla \zeta_{\e}|\leq 1/\e$, $\zeta_{\e}=\alpha\e$ on $\cup_k B_k(\e)$. In particular we can make use of the neighbourhood $\bigcup_{k=1}^{N_{\e}} B_{k}(3\e)\setminus \Sigma_\e$ to sew up $\zeta_{\e}\ca_{\Sigma_\e}$ with $v_\e(1-\ca_{\Sigma_\e})$ in an $H^1$ way. Furthermore, the slope of the function constructed in this way can be controlled by $1/\e$ and hence the gradient of the surgery, namely $\hat v_\e$, still has modulus less than $1/\e$ (up to the carefulness of Remark \ref{rmk costraint ve}) as required by the constraint. In particular, by considering $\hat v_\e$ in place of $v_\e$ we can see that $\hat v_{\e}\in V_\e$. In order to alleviate the notations we will neglect this correction that, indeed, 
does not affect the energy asymptotically, due to the fact that  $    |\bigcup_{k=1}^{N_{\e}} B_{k}(3\e)|/\e \leq C{N_{\e}}\e^{n-1}\rightarrow 0$.

\end{remark}
\begin{remark}[On the constraint $|\nabla v_\e|\leq 1/\e$]\label{rmk costraint ve}
Notice that
	\[
	|\nabla v_{\e}|=\frac{(1-\alpha\e)}{\e}\sqrt{[1+\e^2|\nabla \vartheta(x)|^2]}\leq C_{\e}/\e
	\]
where $C_{\e}\searrow 1$. In particular we can correct our $v_{\e}$ by dividing by the factor $C_{\e}>1$ so to ensure $|\nabla v_{\e}|\leq 1/\e$ without essentially changing the structure of the recovery sequence. To ease the notations we also decided not to take into account this small correction that is anyhow asymptotically  negligible. 
\end{remark}

Up to these modifications we can thus pretend that $u_{\e}\in W^{1,\infty}(\Omega;\R^n)$, $v_{\e}\in W^{1,\infty}(\Omega;[0,1])$ and $u_{\e} \rightarrow u$, $v_{\e}\rightarrow 1$ in $L^1$. For the sake of shortness, in the sequel when referring to a point $x\in A_{\vartheta\e}$ we will adopt the slight abuse of notation $\vartheta(x)$ by meaning $\vartheta(x)=\vartheta(\ov{x})$ which is equivalent to consider the normal extension of $\vartheta$ to $A_{\vartheta\e}$. 
We now proceed to the proof of the following Proposition.
	\begin{proposition}\label{propo: approx of nice function}
If $u\in \Cl$, there exists a function $\vartheta$ such that the sequences defined in \eqref{eqn: rcvry v regular} and \eqref{eqn: rcvry u regular} are recovery sequence for the energy $\F$. In particular
		\[
		\lim_{\e\rightarrow 0 } \F_{\e}(u_{\e},v_{\e}) =\F(u,1).
		\]
Moreover $\|u_{\e}\|_{L^{\infty}}\leq \|u\|_{L^{\infty}}$ and $u_{\e}\rightarrow u$ in $L^2$.
	\end{proposition}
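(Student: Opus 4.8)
The plan is to compute directly the limit of $\F_\e(u_\e,v_\e)$ on the three regions on which the recovery pair is defined: the outer region $\Omega\setminus A_{(\vartheta+1)\e}$, the transition annulus $A_{(\vartheta+1)\e}\setminus A_{\vartheta\e}$, and the inner slab $A_{\vartheta\e}$, and then to choose $\vartheta$ so as to optimize the resulting expression. First I would note that on $\Omega\setminus A_{(\vartheta+1)\e}$ we have $u_\e=u$, $v_\e=1$, so the contribution converges to $\int_\Omega \A e(u)\cdot e(u)\,\d x + \int_\Omega F(x,e(u),1)\,\d x$ by dominated convergence (using $|A_{(\vartheta+1)\e}|\to 0$, $\H^{n-1}$-finiteness of $J_u$, the growth bound $|F(x,M,v)|\le\ell|M|$ from item 3) of Assumption \ref{hyp on F}, and the fact that $u\in W^{m,\infty}$ away from $\ov{J_u}$). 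This recovers the bulk part of $\Phi(u)$.

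Next I would treat the annulus $A_{(\vartheta+1)\e}\setminus A_{\vartheta\e}$, where $v_\e$ interpolates linearly in the signed distance between $\alpha\e$ and $1$ with slope $(1-\alpha\e)/\e$ while $u_\e=u$ is smooth there; since the width of the annulus is of order $\e$ and $|e(u_\e)|$ is bounded, the term $\int v_\e\A e(u_\e)\cdot e(u_\e)$ and the $F$-term are $O(\e)\cdot\H^{n-1}(J_u)\to 0$, whereas $\frac1\e\int\psi(v_\e)$ is the essential contribution: by the coarea/layer-cake formula in the normal variable, on each fiber $\int_{\vartheta(y)\e}^{(\vartheta(y)+1)\e}\frac1\e\psi(v_\e)\,\d t$ tends (after the change of variables $s=v_\e$, $\d s = \frac{1-\alpha\e}{\e}\d t$) to $\int_0^1\psi(s)\,\d s$, so integrating over $J_u$ gives $\big(\int_0^1\psi\big)\H^{n-1}(J_u)$ from each of the two sides of the jump, hence $b\,\H^{n-1}(J_u)=2\int_0^1\psi(t)\,\d t\,\H^{n-1}(J_u)$. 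On the inner slab $A_{\vartheta\e}$, where $v_\e\equiv\alpha\e$ and $u_\e$ is the affine-in-distance interpolation of the traces $u(\ov x\pm\vartheta(\ov x)\e\nu)$, the $\psi$-term contributes $\frac{\psi(\alpha\e)}{\e}|A_{\vartheta\e}|\to 0$ (since $|A_{\vartheta\e}|=O(\e)$ and $\psi(\alpha\e)\to\psi(0)$), and the elastic term is $\int_{A_{\vartheta\e}}\alpha\e\,\A e(u_\e)\cdot e(u_\e)$. The crucial point is that $e(u_\e)$ on the slab is dominated by the normal-derivative part $\frac{1}{2\vartheta(\ov x)\e}\big(u(\ov x+\vartheta\e\nu)-u(\ov x-\vartheta\e\nu)\big)\odot\nu$, which as $\e\to0$ is $\approx \frac{[u](\ov x)}{2\vartheta(\ov x)\e}\odot\nu$ of size $1/\e$, the tangential derivatives being bounded; multiplying by $\alpha\e$ and integrating the fiber of length $2\vartheta(\ov x)\e$, the elastic term on the slab tends to $\int_{J_u}\frac{\alpha}{2\vartheta(z)}\A([u]\odot\nu)\cdot([u]\odot\nu)\,\d\H^{n-1}(z)$, while the $F$-term, by item 3) growth and the definition of the recession function $F_\infty$ — here one must check that $F(x,tM,0)/t\to F_\infty(x,M)$ uniformly enough, appealing to convexity and Remark \ref{rmk: Lipschitz function}, and that the distinction between $v_\e=\alpha\e$ and $v_\e=0$ is washed out by item 4), $\omega_F(s;0)\to0$ — tends to $\int_{J_u} F_\infty(z,[u]\odot\nu)\,\d\H^{n-1}(z)$.

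It remains to choose $\vartheta$. Collecting the slab elastic term $\int_{J_u}\frac{\alpha}{2\vartheta(z)}\A([u]\odot\nu)\cdot([u]\odot\nu)\,\d\H^{n-1}$ together with the $\psi$-contribution from the transition annulus, which with a slightly sharper bookkeeping actually reads $2\int_{J_u}\vartheta(z)\psi(0)\,\d\H^{n-1}+b\,\H^{n-1}(J_u)+o(1)$ (the linear-in-$\vartheta$ correction coming from the fact that the annulus sits between distances $\vartheta\e$ and $(\vartheta+1)\e$, i.e. contributes $\psi(0)$ on a slab of half-width $\vartheta\e$ on each side in addition to the full $\int_0^1\psi$), I would minimize pointwise in $\vartheta(z)>0$ the expression $\frac{\alpha}{2\vartheta}\A([u]\odot\nu)\cdot([u]\odot\nu)+2\vartheta\psi(0)$; the optimal $\vartheta(z)=\tfrac12\sqrt{\alpha\,\A([u]\odot\nu)\cdot([u]\odot\nu)/\psi(0)}$ (taken $=0$ exactly where $[u]=0$, consistently with the requirement $\vartheta=0$ on $K\setminus J_u$) gives minimum value $2\sqrt{\alpha\psi(0)}\sqrt{\A([u]\odot\nu)\cdot([u]\odot\nu)}=a\sqrt{\A([u]\odot\nu)\cdot([u]\odot\nu)}$, which is exactly the missing surface term in $\Phi(u)$. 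Finally $\|u_\e\|_{L^\infty}\le\|u\|_{L^\infty}$ is immediate because on the slab $u_\e$ is a convex combination of the values $u(\ov x\pm\vartheta\e\nu)$, and $u_\e\to u$ in $L^2$ follows from $|u_\e|\le\|u\|_{L^\infty}$, $|A_{\vartheta\e}|\to0$, and dominated convergence. I expect the main obstacle to be the careful justification that $e(u_\e)$ on the slab is, up to an $L^2$-negligible error, the rank-one tensor $\frac{[u](z)}{2\vartheta(z)\e}\odot\nu(z)$ — this requires controlling the tangential derivatives of the map $\ov x\mapsto u(\ov x\pm\vartheta(\ov x)\e\nu)$ uniformly using $u\in W^{1,\infty}(\Omega\setminus\ov{J_u})$ and $\vartheta\in W^{1,\infty}(K)$ — and, intertwined with it, the passage $F(x,e(u_\e),v_\e)\to F_\infty(z,[u]\odot\nu)$, which needs the uniform-in-$x$ convergence of difference quotients of the convex function $M\mapsto F(x,M,0)$ to its recession function together with item 4) of Assumption \ref{hyp on F} to replace $v_\e=\alpha\e$ by $0$.
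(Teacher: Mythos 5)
Your plan follows essentially the same route as the paper's proof: split $\F_\e(u_\e,v_\e)$ over the outer region, the transition annulus $A_{(\vartheta+1)\e}\setminus A_{\vartheta\e}$ and the inner slab $A_{\vartheta\e}$; identify the slab elastic contribution $\int_{J_u}\frac{\alpha}{2\vartheta}\A([u]\odot\nu)\cdot([u]\odot\nu)\,\d\H^{n-1}$ after controlling the tangential part of $\nabla u_\e$; obtain the slab $F$-contribution through the recession function together with item 4) of Assumption \ref{hyp on F} to pass from $v_\e=\alpha\e$ to $0$; and finally optimize pointwise in $\vartheta$, which is exactly the paper's Step four, with the same optimal $\bar\vartheta=\frac{\sqrt{\alpha}}{2\sqrt{\psi(0)}}\sqrt{\A([u]\odot\nu)\cdot([u]\odot\nu)}$ and the same minimal value $a\sqrt{\A([u]\odot\nu)\cdot([u]\odot\nu)}$.

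There is, however, one internal inconsistency you should repair, because as written one of your intermediate claims is false. You first assert that on the slab the dissipation term satisfies $\frac{\psi(\alpha\e)}{\e}|A_{\vartheta\e}|\to 0$ ``since $|A_{\vartheta\e}|=O(\e)$''; but precisely because $|A_{\vartheta\e}|=2\e\int_{J_u}\vartheta\,\d\H^{n-1}+o(\e)$, this term is of order one and converges to $2\psi(0)\int_{J_u}\vartheta\,\d\H^{n-1}$. This is not a negligible remainder: it is exactly the linear-in-$\vartheta$ term that must be balanced against the slab elastic term $\frac{\alpha}{2\vartheta}$ in the final optimization. Later you do reinstate a term $2\psi(0)\int_{J_u}\vartheta\,\d\H^{n-1}$, but you attribute it to the transition annulus; that attribution is also incorrect, since the annulus has width exactly $\e$ on each side (between distances $\vartheta\e$ and $(\vartheta+1)\e$), independent of $\vartheta$, and contributes only $2\big(\int_0^1\psi\big)\H^{n-1}(J_u)=b\,\H^{n-1}(J_u)$ in the limit (after the change of variables $s=v_\e$). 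Your final tally and the resulting optimization are numerically correct, but the bookkeeping contradicts your earlier claim: the correct statement is that the slab, where $v_\e\equiv\alpha\e$, supplies $2\psi(\alpha\e)\int_{J_u}\vartheta\,\d\H^{n-1}\to 2\psi(0)\int_{J_u}\vartheta\,\d\H^{n-1}$, while the annulus supplies $b\,\H^{n-1}(J_u)$; this is how the paper organizes Step two, and with this correction your argument coincides with the paper's proof.
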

\begin{proof}
We first compute the gradient of $u_{\e}$ for points $x\in A_{\vartheta\e}$.
\begin{align}
	\nabla u_{\e}(x)&=\frac{\nabla u(\ov{x}+\e\vartheta\nu)(\nabla P(x) +\e \nabla (\vartheta \nu) ) + \nabla  u(\ov{x}-\e \vartheta\nu)(\nabla P(x) -\e \nabla (\vartheta \nu) )   }{2}\nonumber\\
	&+\frac{u(\ov{x}+\vartheta\e\nu) \otimes \nu- u(\ov{x}-\vartheta\e\nu)\otimes \nu }{2\vartheta\e}\nonumber\\
	&+\frac{\nabla u(\ov{x}+\e \vartheta \nu) (\nabla P(x) +\e \nabla (\vartheta \nu)) - \nabla u(\ov{x}-\e\vartheta\nu)(\nabla P(x) - \e\nabla( \vartheta \nu))}{2\vartheta\e}\dist(x,Ju)\nonumber\\
		&-\frac{ u(\ov{x}+\e \vartheta\nu) \otimes \nabla \vartheta -  u(\ov{x}-\e\vartheta\nu)\otimes \nabla \vartheta}{2\vartheta^2\e} \dist(x,Ju)\nonumber\nonumber\\
	&=\nabla u(\ov{x}\pm\vartheta\e\nu)(\nabla P(x)\pm\e \nabla (\vartheta \nu) ) \left[\frac{1}{2}\pm\frac{\dist(x,Ju)}{2\vartheta\e}\right]\nonumber\\
		&-\frac{S_{\vartheta\e}u(\ov{x})\otimes\nabla \vartheta}{2\vartheta^2\e} \dist(x,J_u)+\frac{S_{\vartheta \e } u(\ov{x}) \otimes \nu }{2\vartheta\e},\label{gradient}
	\end{align}
where
	\[
	S_{\vartheta\e}u(\ov{x}):=u(\ov{x}+\vartheta\e\nu) - u(\ov{x}-\vartheta\e\nu).
	\]
In order to give a more clear picture of the computations we are performing, we will argue on each separate addendum of the energy $\F_{\e}$. In particular we divide the proof in three steps plus an additional fourth where we choose the appropriate $\vartheta:J_u\rightarrow \R$. Each addendum contains a principal part that has a nonzero limit as $\e$ approaches zero and a vanishing remainder $R_{\e}(u_{\e},v_{\e})$. For the sake of shortness in the sequel, we will always denote  with a small abuse, by $R_{\e}$ any term that is vanishing. In particular the term $R_{\e}$ can change from line to line. \\
	\text{}\\
	\textbf{Step one:} \textit{limit of the absolutely continuous part of the gradient}. Notice that
		\begin{align*}
		\int_{\Omega} v_{\e} \A e(u_{\e})\cdot e(u_{\e}) \d x = \int_{\Omega \setminus A_{(\vartheta+1)\e} } &\A e(u)\cdot e(u) \d x \\
		&+ \int_{A_{\vartheta\e}} \alpha\e \A e(u_{\e})\cdot e(u_{\e}) \d x + R_{\e}(u_{\e},v_{\e}),
		\end{align*}
where 
	\[
	R_{\e}(u_{\e},v_{\e}) = \int_{A_{(\vartheta+1)\e}\setminus A_{\vartheta\e}} v_{\e} \A e(u) \cdot e(u) \d x,
	\]
which (since $v_{\e}\leq  u\in W^{1,\infty}$ and $\vartheta\in L^{\infty}(K;\R, \H^{n-1})$) is clearly vanishing to $0$. Moreover
\begin{align*}
\left| e(u_{\e})(x)-\frac{S_{\vartheta\e}u(\ov{x}) \odot \nu(x)}{2\vartheta\e} \right|\leq \left[\frac{1}{2}+\frac{\dist(x,J_u)}{2\vartheta\e}\right] &\left| \nabla u (\ov{x} \pm\e\vartheta \nu)  \right| \left( \|\nabla P\|_{\infty} + \e \|\nabla (\vartheta\nu)\|_{\infty} \right)\\
&+ |S_{\vartheta\e}u(\ov{x})|\|\nabla \vartheta\|_{\infty}\frac{\dist(x,J_u)}{2\vartheta^2\e}\\
\leq \| \nabla u \|_{\infty} ( \|\nabla P\|_{\infty}&+ \e \|\nabla (\vartheta\nu)\|_{\infty} ) +\frac{ \|u\|_{\infty} \|\nabla \vartheta\|_{\infty}}{2\vartheta}\leq C ,
\end{align*}
where $C$ is a constant depending on $u$ and $\vartheta$ only (that in the sequel may vary from line to line). In particular
	\[
	\left|\A e(u_{\e})\cdot e(u_{\e}) -\frac{1}{4\vartheta^2\e^2}\A (S_{\vartheta\e}u(\ov{x}) \odot \nu(x)) \cdot S_{\vartheta\e}u(\ov{x}) \odot \nu(x)\right|\leq C\left(1+\frac{\|u\|_{\infty}}{\vartheta\e}\right).
	\]
This means that
	\begin{align*}
	\int_{A_{\vartheta\e}}\alpha\e\left|\A e(u_{\e})\cdot e(u_{\e}) -\frac{1}{4\vartheta^2\e^2}\A (S_{\vartheta\e}u(\ov{x}) \odot \nu(x)) \cdot S_{\vartheta\e}u(\ov{x}) \odot \nu(x)\right| \d x \leq \alpha\e C
	\end{align*}
implying
	\begin{align*}
	\int_{A_{\vartheta \e}} \alpha\e \A e(u_{\e})\cdot e(u_{\e}) \d x=\alpha\int_{A_{\vartheta\e}}\frac{1}{4\vartheta(\ov{x})^2\e} \A (S_{\vartheta\e}u(\ov{x}) \odot \nu(x)) \cdot S_{\vartheta\e}u(\ov{x}) \odot& \nu(x) \d x \\
	& +R_{\e}(u_{\e},v_{\e}).
	\end{align*}
By slicing the term with the co-area formula we get
	\begin{align*}
\alpha\int_{A_{\vartheta\e}}& \frac{1}{4\vartheta(\ov{x})^2\e} \A (S_{\vartheta\e}u(\ov{x}) \odot \nu(x)) \cdot S_{\vartheta\e}u(\ov{x}) \odot \nu(x) \d x\\
	&=\alpha\int_{J_u} \d \H^{n-1}(z) \int_{-\e\vartheta(z)}^{\e\vartheta(z)}\frac{1}{4\vartheta(z)^2\e} \A (S_{\vartheta\e}u(z) \odot \nu(z)) \cdot S_{\vartheta\e}u(z) \odot \nu(z) \d t	\\
		&=\alpha\int_{J_u} \frac{1}{2\vartheta(z)} \A (S_{\vartheta\e}u(z) \odot \nu(z)) \cdot S_{\vartheta\e}u(z) \odot \nu(z)  \d \H^{n-1}(z).	
	\end{align*}
By virtue of $S_{\vartheta \e}u(z) \rightarrow [u](z)$, 
we get
	\begin{equation}\label{eqn: limit part ac}
	\begin{split}
	\lim_{\e\rightarrow 0} \int_{\Omega} v_{\e} \A e(u_{\e})\cdot e(u_{\e})& \d x=\int_{\Omega} \A e(u)\cdot e(u) \d x \\
	&+\alpha\int_{J_u} \frac{\A ([u](z)\odot \nu(z)) \cdot ([u](z)\odot \nu(z)) }{2\vartheta(z)} \d \H^{n-1}(z).
	\end{split}
	\end{equation}
\text{}\\
\textbf{Step two:}\textit{ limit of the fracture's potential part.} Notice that
	\begin{align*}
	\frac{1}{\e}\int_{\Omega} \psi(v_{\e}) &= \frac{1}{\e}\int_{A_{(\vartheta+1)\e}\setminus A_{\vartheta \e} } \psi(v_{\e}) \d x +\frac{\psi(\alpha\e)}{\e}\int_{A_{\vartheta\e}} \d x\\
	&=\frac{1}{\e}\int_{J_u} \int_{\vartheta(z)\e}^{(\vartheta(z)+1)\e} \psi(v_{\e}(z+t\nu)) + \psi(v_{\e}(z-t\nu)))\d t \d \H^{n-1}(z)\\
	&  \ \ \ \ \ \ \ \ + 2\psi(\alpha\e) \int_{J_u} \vartheta(z) \d \H^{n-1}(z)\\
		&=\int_{J_u} \int_{0}^{1} \psi(v_{\e}(z+(t\e+\e\vartheta(z))\nu)) + \psi(v_{\e}(z-(t\e+\e\vartheta(z))\nu)))\d t \d \H^{n-1}(z)\\
	&  \ \ \ \ \ \ \ \ + 2\psi(\alpha\e) \int_{J_u} \vartheta(z) \d \H^{n-1}(z).\\
	\end{align*}
Since $	v_{\e}(z\pm(t\e+\e\vartheta(z))\nu) \rightarrow t$, 
we get
	\begin{equation}\label{eqn: limit part fracture}
	\begin{split}
	\lim_{\e\rightarrow 0} \frac{1}{\e}\int_{\Omega} \psi(v_{\e})\d x = 2\psi(0) \int_{J_u}\vartheta(z)\d \H^{n-1}(z) + 2\H^{n-1}(J_u)\int_{0}^1 \psi(t) \d t.
	\end{split}
	\end{equation}
	\text{}\\
\textbf{Step three:} \textit{limit of the lower order potential}.
We see that
	\begin{align*}
	\int_{\Omega} F(x,e(u_{\e}),v_{\e}) \d x &= \int_{\Omega\setminus A_{(\vartheta+1)\e}} F(x,e(u),1)\d x + \int_{ A_{(\vartheta+1)\e}\setminus A_{\vartheta\e}} F(x,e(u),v_{\e})\d x \\
	&+  \int_{  A_{\vartheta\e}} F(x,e(u_{\e}),\alpha\e)\d x \\
	&= \int_{\Omega\setminus A_{(\vartheta+1)\e}} F(x,e(u),1)\d x+  \int_{  A_{\vartheta\e}} F(x,e(u_{\e}),\alpha\e)\d x +R_{\e}(u_{\e},v_{\e}).
	\end{align*}
Once again the co-area formula leads to
	\begin{align*}
	\int_{A_{\vartheta\e}} F(x,e(u_{\e}), \alpha\e) \d x =& \int_{J_u} \int_{-\vartheta\e}^{\vartheta\e} F(z+t \nu, e(u_{\e})(z+t\nu), \alpha\e) \d t \d\H^{n-1}(z)\\
	=&\int_{J_u} \vartheta(z)\e \int_{0}^{1} F(z+t\vartheta(z)\e \nu, e(u_{\e})(z+t \vartheta(z)\e \nu), \alpha\e) \d t \d\H^{n-1}(z)\\
	&+\int_{J_u} \vartheta(z)\e \int_0^{1} F(z-t \vartheta(z)\e \nu, e(u_{\e})(z-t\vartheta(z)\e \nu), \alpha\e) \d t  \d\H^{n-1}(z).
	\end{align*}
	We underline that
\begin{align*}
	e(u_{\e})(z\pm t\e \vartheta(z) \nu) &=\frac{1}{4}\nabla u (\nabla P \pm\e \nabla (\vartheta \nu) )(z\pm\vartheta\e\nu) \left[1\pm t \right]\\
	&+ \frac{1}{4}(\nabla P^t\pm\e \nabla (\vartheta \nu)^t)\nabla u^t(z \pm\vartheta\e\nu)\left[1\pm t\right]	\\
		&-\frac{S_{\vartheta\e}u(z)\odot\nabla \vartheta}{2}\frac{t}{\vartheta(z)}+\frac{S_{\e\vartheta} u(z) \odot \nu }{2\vartheta\e}=M_{\e}+ \frac{[u](z)\odot \nu}{2\vartheta(z) \e}
\end{align*}
with
\begin{align*}
M_{\e}&:=\frac{1}{4}\nabla u (\nabla P \pm\e \nabla (\vartheta \nu) )(z\pm\vartheta\e\nu) \left[1\pm t \right]+ \frac{1}{4}(\nabla P^t\pm\e \nabla (\vartheta \nu)^t)\nabla u^t(z \pm\vartheta\e\nu)\left[1\pm t\right]	\\
		&-\frac{S_{\vartheta\e}u(z)\odot\nabla \vartheta}{2}\frac{t}{\vartheta(z)}+\frac{S_{\e\vartheta} u(z) \odot \nu  - [u]\odot \nu }{2\vartheta\e}.
\end{align*}
Note that from
	\[
	\Big{|} \frac{u(z\pm t\e\vartheta(z) \nu) - u(z)^{\pm}}{\e \vartheta(z)} \Big{|}\leq \frac{1}{2\e \vartheta(z)} \int_{0}^{t\e\vartheta(z)}|\nabla u(z\pm s \nu) \nu| \d s\leq \frac{t}{2}\|\nabla u\|_{\infty},
	\] 
clearly $\|M_{\e}\|<+\infty$. Then, by definition of $F_{\infty}$, we get
	\begin{align*}
	\lim_{\e\rightarrow 0} \vartheta(z) \e F(z+t\vartheta(z)\e \nu, e(u_{\e})&(z+t \vartheta(z)\e \nu), \alpha\e) \\
	& = \lim_{\e\rightarrow 0} \vartheta(z) \e F\left(z+t\vartheta(z)\e \nu, M_{\e} + \frac{[u]\odot \nu}{2\vartheta(z)\e}, \alpha\e\right)\\
	& =\lim_{\e\rightarrow 0} \frac{1}{2}  \frac{F\left(z+t\vartheta(z)\e \nu, M_{\e} + \frac{[u]\odot \nu}{2\vartheta(z)\e}, \alpha\e\right)}{1/(2\vartheta(z)\e)}=\frac{1}{2} F_{\infty}(z,[u]\odot \nu).
	\end{align*}
In the same token,
	\begin{align*}
	\lim_{\e\rightarrow 0} \vartheta(z) \e F(z-t\vartheta(z)\e \nu, e(u_{\e})(z-t \vartheta(z)\e \nu), \alpha\e)  &=\frac{1}{2} F_{\infty}(z,[u]\odot \nu).
	\end{align*}
Hence
	\begin{align*}
	\lim_{\e\rightarrow 0} \int_{A_{\vartheta\e}} F(x,e(u_{\e}),\alpha\e) \d x = \int_{J_u} F_{\infty}(z,[u]\odot \nu) \d \H^{n-1}(z).
	\end{align*}
In particular,
	\begin{equation}\label{eqn: limit part dive}
	\begin{split}
	\lim_{\e\rightarrow 0} \int_{\Omega} F (x,e(u_{\e}),v_{\e}) \d x = \int_{\Omega} F(x,e(u),1)\d x  +\int_{J_u} F_{\infty}(z,[u]\odot \nu)  \d \H^{n-1}(z).
	\end{split}
	\end{equation}
	\text{}\\
\textbf{Step four:} \textit{Choice of $\vartheta$}. Collecting together steps one, two and three and in particular \eqref{eqn: limit part ac}, \eqref{eqn: limit part fracture} and \eqref{eqn: limit part dive} we write
	\begin{align*}
	\lim_{\e\rightarrow 0} \F_{\e}(u_{\e},v_{\e})=&\int_{\Omega} \A e(u)\cdot e(u) \d x \\
	&+\alpha\int_{J_u} \frac{\A ([u](z)\odot \nu(z)) \cdot ([u](z)\odot \nu(z)) }{2\vartheta(z)} \d \H^{n-1}(z)\\
&+2\psi(0) \int_{J_u}\vartheta(z)\d \H^{n-1}(z) + 2\int_{0}^1 \psi(t) \d t\H^{n-1}(J_u)\\
&+\int_{\Omega} F(x,e(u),1)  \d x + \int_{J_u} F_{\infty}(z,[u]\odot \nu) \d \H^{n-1}(z).
	\end{align*}
Due to Schwarz inequality
	\[
	\frac{A}{2\vartheta} +2\vartheta B\geq 2\sqrt{AB} \ \ \ \ \text{where "$=$" is attained iff $\vartheta=\frac{\sqrt{A}}{2\sqrt{B}}$},
	\]
by choosing $\bar{\vartheta}(z):=\frac{\sqrt{\a}}{2\sqrt{\psi(0)}}\sqrt{\A ([u](z) \odot \nu(z))\cdot [u](z) \odot \nu(z)}$ we can guarantee that, for any other $\vartheta(z)$ satisfying the hypothesis, it will hold
	\begin{align*}
	\alpha\int_{J_u} \frac{\A ([u](z)\odot \nu(z)) \cdot ([u](z)\odot \nu(z)) }{2\vartheta(z)} & \d \H^{n-1}(z)
+2\psi(0) \int_{J_u}\vartheta(z)\d \H^{n-1}(z) \\
 \geq 2\sqrt{\alpha \psi(0)} \int_{J_u} & \sqrt{\A ([u](z) \odot \nu(z))\cdot [u](z) \odot \nu(z)} \d \H^{n-1}(z). 
	\end{align*}
In particular, with this choice we reach the equality (minimum energy). Notice that all the hypothesis are satisfied due to the regularity of $u\in \Cl$, in particular $\bar{\vartheta}(z) \in W^{1,\infty}(K;\R,\H^{n-1})\cap L^{\infty}(K;\R,\H^{n-1})$. Moreover, by definition it holds $\bar{\vartheta}>0$ on $J_u$ and $\ov{\vartheta}=0$ on $K\setminus J_u$. Thus, this choice guarantees that
	\begin{align*}
	\lim_{\e\rightarrow 0} \F_{\e}(u_{\e},v_{\e})=&\int_{\Omega} \A e(u)\cdot e(u) \d x \\
	&+2\sqrt{\alpha\psi(0)}\int_{J_u} \sqrt{\A ([u](z)\odot \nu(z)) \cdot ([u](z)\odot \nu(z)) } \d \H^{n-1}(z)\\
&+ 2\left(\int_{0}^1 \psi(t) \d t \right)\H^{n-1}(J_u) +\int_{\Omega} F(x,e(u),1)   \d x +\\
	& + \int_{J_u} F_{\infty}(z,[u]\odot \nu)  \d \H^{n-1}(z).
	\end{align*}
\text{}\\
\textbf{Step five:} \textit{$L^2$ convergence and $L^{\infty}$ bound}. By construction, it follows that $\|u\|_{L^{\infty}}\leq \|u\|_{L^{\infty}}$. We easily compute
	\begin{align*}
	\int_{\Omega} |u_{\e}-u|^2\d x &=\int_{A_{\vartheta \e}} |u_{\e}-u|^2\d x \leq C |A_{\vartheta\e}\|u\|_{\infty}^2 \rightarrow 0.
	\end{align*}
\end{proof}
\begin{remark}\label{crmk: ontrols on the gradient}
\rm{
Notice that, from \eqref{gradient} it follows also that
\begin{align*}
\int_{\Omega}|\nabla u_{\e}|\d x \leq C\left[|\Omega|(\|\nabla u\|_{\infty} +\|u\|_{\infty})+\int_{A_{\vartheta\e}} \frac{|S_{\vartheta \e}u (\bar{x})|}{\vartheta\e} \d x\right].
\end{align*}
Moreover, since $u$ is regular outside $J_u$ we can also see  that
	\begin{align*}
	|S_{\vartheta \e}u (\bar{x})|\leq |u(\bar{x}+\vartheta\e \nu) - u^+(\bar{x})|+ |u(\bar{x}-\vartheta\e \nu) - u^+(\bar{x})|+ |u^+(\bar{x}) - u^-(\bar{x})|
	\end{align*}
and 
	\begin{align*}
	 |u(\bar{x}+\vartheta\e \nu) - u^+(\bar{x})|\leq \int_{0}^{\e\vartheta(\bar{x})} |\nabla u(x)|\d x\leq \|\nabla u\|_{\infty}\e\vartheta.
	\end{align*}
Since
	\begin{align*}
	\int_{A_{\vartheta\e}}  \frac{|u^+(\bar{x}) - u^-(\bar{x})|}{\vartheta\e} \d x&=\int_{J_u}\d \H^{n-1}(y)\int_{-\vartheta\e}^{\vartheta\e} 
	\frac{|u^+(y) - u^-(y)|}{\vartheta\e} \d t\\
	&=2\int_{J_u}|[u]|\d \H^{n-1}(y)\leq 2\|u\|_{\infty}\H^{n-1}(J_u).
	\end{align*}
All this considered gives 
	\begin{align}\label{stima bound recovery 1}
\int_{\Omega} |\nabla u_{\e}|\d x \leq C,
\end{align}
for a constant $C$ that depends on $u$ and $\Omega$ only. Along the same line we can also obtain
	\begin{align*}
\int_{A_{\vartheta\e}} v_{\e}|\nabla u_{\e}|^2\d x &\leq C\left[\|\nabla u\|^2_{\infty}+ \alpha \int_{A_{\vartheta\e}} \e \frac{|u^+(\bar{x})-u^-(\bar{x})|^2}{\vartheta^2(\bar{x})\e^2} \d x\right]\\
&\leq  C\left[\|\nabla u\|^2_{\infty}+ \alpha \int_{J_u} \int_{-\vartheta(y)\e}^{\vartheta(y)\e} \frac{|u^+(y)-u^-(y)|^2}{\vartheta^2(y)\e} \d x\right]\leq  C,
	\end{align*}
for a constant $C$ that depends on $u$ and $\Omega$ only. In particular,
		\begin{align}\label{stima bound recovery 2}
\int_{\Omega} v_{\e}|\nabla u_{\e}|^2\d x \leq C,
\end{align}
for a constant $C$ that depends on $u$ and $\Omega$ only.
}
\end{remark}
\subsection{Recovery sequence for $u\in SBD^2(\Omega)\cap L^{\infty}(\Omega)$}
We provide an approximation Theorem based on the following two Theorems from \cite{iurlano2013fracture} and \cite{cortesani1999density}.
\begin{theorem}\label{thm IUR}[\cite{iurlano2013fracture}, Theorem 3.1]
Let $\Omega$ be an open bounded set with Lipschitz boundary and let $u\in GSBD^2(\Omega)\cap L^{\infty}(\Omega)$. Then there exists a sequence $\{u_k\}_{k\in \N}\subset SBV^2(\Omega;\R^n) \cap L^{\infty}(\Omega;\R^n) \cap W^{1,\infty}(\Omega \setminus S_k;\R^n) $ such that each $J_{u_k}$ is contained in the union $S_k$ of a finite number of closed, connected pieces of $C^1$-hypersurfaces and the following properties hold:
	\begin{align*}
	\text{a)} &\ \|u_k - u\|_{L^2} \rightarrow 0;\\
	\text{b)} & \ \|e(u_k) - e(u)\|_{L^2} \rightarrow 0;\\
	\text{c)} & \ \H^{n-1}(J_{u_k}\Delta J_u)=0;\\
	\text{d)} &\  \int_{J_{u_k} \cup J_u} \max\{ |u_k^{\pm} - u^{\pm}|, M\}\d \H^{n-1} \rightarrow 0 \ \ \ \text{for all $M\in \R^+$}.
	\end{align*}
Moreover $\|u_k\|_{L^{\infty}}\leq \|u\|_{L^{\infty}}$.
\end{theorem}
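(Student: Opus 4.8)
Although quoted from \cite[Theorem~3.1]{iurlano2013fracture}, I sketch the argument for completeness. The plan is to produce $\{u_k\}$ by composing a short chain of approximations — each step improving regularity while keeping the four quantities in a)--d) under control — and then diagonalizing. \emph{A first reduction:} since $u\in L^\infty(\Omega;\R^n)$ and $\H^{n-1}(J_u)<+\infty$, the jump part $([u]\odot\nu_u)\H^{n-1}\llcorner J_u$ of the distributional strain of $u$ has finite total mass (bounded by $2\|u\|_\infty\H^{n-1}(J_u)$); together with $e(u)\in L^2(\Omega)\subset L^1(\Omega)$ and the absence of a Cantor part, this makes $Eu$ a finite Radon measure, so the one-dimensional slicing characterization of $GBD$ (see \cite{DM}) gives $u\in SBD^2(\Omega;\R^n)\cap L^\infty(\Omega)$. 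We may therefore argue from now on in $SBD^2$.

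\emph{Chambolle's regularization.} Applying the $SBD$-approximation of \cite{Ch1} we obtain $w_j\in SBV^2(\Omega;\R^n)\cap L^\infty(\Omega)$, each smooth away from a closed set $K_j\supseteq J_{w_j}$ with $\H^{n-1}(K_j)<+\infty$, such that $w_j\to u$ and $e(w_j)\to e(u)$ in $L^2$, $\H^{n-1}(K_j\Delta J_u)\to 0$, and $\int_{J_{w_j}\cup J_u}\min\{|w_j^\pm-u^\pm|,M\}\d\H^{n-1}\to 0$ for every $M>0$. Since every operation entering that construction — mollification, local reflection across the measure-theoretic tangent planes of $J_u$, convex glueing through a partition of unity — is non-expansive for $\|\cdot\|_\infty$, one also has $\|w_j\|_\infty\le\|u\|_\infty$.

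\emph{Polyhedral jump set and higher regularity.} Each $w_j$ lies in $SBV^2\cap L^\infty$, so for fixed $j$ the density theorem of \cite{cortesani1999density} provides $w_{j,\ell}\in SBV^2(\Omega;\R^n)\cap L^\infty(\Omega)\cap W^{1,\infty}(\Omega\setminus S_{j,\ell};\R^n)$, with $S_{j,\ell}$ a finite union of closed, pairwise disjoint $(n-1)$-simplexes (a fortiori closed connected pieces of $C^1$-hypersurfaces) containing $J_{w_{j,\ell}}$, such that $w_{j,\ell}\to w_j$ and $\nabla w_{j,\ell}\to\nabla w_j$ in $L^2$, $\H^{n-1}(J_{w_{j,\ell}}\Delta J_{w_j})\to 0$, the analogous truncated-trace convergence holds, and $\|w_{j,\ell}\|_\infty\le\|w_j\|_\infty$; since $|e(\zeta)|\le|\nabla\zeta|$ pointwise, $\nabla w_{j,\ell}\to\nabla w_j$ in $L^2$ forces $e(w_{j,\ell})\to e(w_j)$ in $L^2$. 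Finally, mollifying $w_{j,\ell}$ separately on each side of every simplex of $S_{j,\ell}$, with mollification radius much smaller than the mutual distance of those simplexes, upgrades $W^{1,\infty}$ to $W^{m,\infty}(\Omega\setminus S_{j,\ell};\R^n)$ for every $m$, at the price of arbitrarily small errors in $L^2$, in $e(\cdot)$ and in the traces, and without increasing the sup-norm.

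\emph{Diagonalization and the main obstacle.} A diagonal choice $u_k:=w_{j(k),\ell(k)}$, using the triangle inequality for a)--c) and, for d), the elementary bound $\min\{|a|,M\}\le\min\{|a-b|,M\}+\min\{|b|,M\}$ to chain truncated traces, produces one sequence with all of a)--d) and $\|u_k\|_\infty\le\|u\|_\infty$. The genuine difficulty is concentrated in the first reduction together with Chambolle's regularization: one must smooth $u$ in a neighbourhood of $J_u$ while keeping \emph{simultaneous} control of $e(w_j)\to e(u)$ in $L^2$, of $\H^{n-1}(K_j)$, and of the one-sided traces. No Korn-type inequality is available in $SBD$ (let alone in $GSBD$), so the smoothing must be carried out locally — essentially by reflection across the approximate tangent plane of $J_u$ — and the traces can only be matched after truncation at an arbitrary level $M$, which is exactly what clause d) records. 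Once this local regularization is in hand, the remaining steps are a black-box application of \cite{cortesani1999density} plus routine mollification.
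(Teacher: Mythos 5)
The paper does not actually prove this statement: it is quoted as an external result from \cite[Theorem 3.1]{iurlano2013fracture} and used as a black box, so the only thing to assess is your sketch on its own merits. Your opening reduction is the one genuinely sound and useful step: $GSBD^2(\Omega)\cap L^{\infty}(\Omega)\subset SBD^2(\Omega)$ does follow from the slicing characterization of $BD$, since the one-dimensional slices are $SBV$ with total variation controlled by $\|e(u)\|_{L^1}+2\|u\|_{\infty}\H^{n-1}(J_u)$ (your phrase ``absence of a Cantor part'' presupposes $Eu$ is a measure, but the slicing argument you point to repairs that). The core of the proof, however, is circular: in the step you call ``Chambolle's regularization'' you attribute to \cite{Ch1} exactly properties c) and d) — the convergence of $\H^{n-1}(J_{w_j}\Delta J_u)$ and of the truncated one-sided traces (note the statement's ``max'' must be read as ``min'', as you implicitly did) — plus the bound $\|w_j\|_{\infty}\le\|u\|_{\infty}$. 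Chambolle's approximation theorem gives a), b), jump sets contained in finitely many $C^1$ hypersurfaces and convergence of $\H^{n-1}(J_{w_j})$, but it does not state the symmetric-difference convergence, the trace condition d), or any sup-norm bound; these refinements are precisely the added content of Iurlano's theorem, whose proof is a direct construction (cube decomposition and interpolation/finite-difference estimates, not reflections and partitions of unity as you describe), rather than a reduction to \cite{Ch1}. Your one-line justification of the $L^{\infty}$ bound cannot be rescued by truncation either, since composing a vector-valued $u$ with a projection onto a ball does not control $e(u)$. So the ``main obstacle'' you correctly name at the end is in fact the part left unproved: as written, the argument assumes the $SBD$ form of the very statement to be established.

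Two secondary points. The passage through \cite{cortesani1999density} followed by side-by-side mollification is not needed for this statement (only $C^1$ pieces and $W^{1,\infty}$ regularity off $S_k$ are required; the paper invokes Cortesani--Toader separately, in Proposition \ref{propo: density argument}), and that theorem, as stated, yields an upper semicontinuity of jump integrals (its property 5)), not the symmetric-difference and truncated-trace convergences you ascribe to it — so even your diagonalization would need those re-derived for the second approximation layer. The chaining of truncated traces itself is fine in principle, because one-sided traces of $BD$ functions exist $\H^{n-1}$-a.e.\ on any fixed rectifiable set, but it rests on the unproved d) for the intermediate sequences.
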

\begin{theorem}\label{thm CT}[\cite{cortesani1999density}, Theorem 3.1]
Let $\Omega$ be an open bounded set with Lipschitz boundary and let $u\in SBV^2(\Omega;\R^n)\cap L^{\infty}(\Omega;\R^n)$. Then there exists a sequence of function $\{u_k\}_{k\in \N} \subset SBV(\Omega;\R^n)$ such that
	\begin{itemize}
	\item[1)] $u_k\in W^{m,\infty}(\Omega\setminus \ov{J_{u_k}})$ for all $m\in \N$ and $\H^{n-1}((\ov{J_{u_k}} \cap \Omega) \setminus J_{u_k})=0$;
	\item[2)] The set $\ov{J_{u_k}}\cap \Omega$ is the the finite union of closed and pairwise disjoint $(n-1)$-simplexes intersected with $\Omega$;
	\item[3)] $\|u_k - u\|_{L^2} \rightarrow 0$;
	\item[4)] $\|\nabla u_k - \nabla u\|_{L^2}\rightarrow 0$;
	\item[5)] $\displaystyle \limsup_{k\rightarrow +\infty} \int_{\ov{A}\cap J_{u_k} } \varphi(x,u_k^+, u_k^-, \nu_k) \d \H^{n-1}(x) \leq \int_{\ov{A}\cap J_{u} } \varphi(x,u^+, u^-, \nu) \d \H^{n-1}(x)$
	\end{itemize}
where property $5)$ holds for every open set $A\subset \Omega$ and every upper semicontinuous function $\varphi:\Omega \times \R^n\times \R^n\times S^{n-1} \rightarrow [0,+\infty)$ such that
	\begin{align}
	\varphi(x,a,b,\nu)&=\varphi(x,b,a,-\nu) \ \ \ &\text{for all $x,a,b,\nu \in \Omega \times \R^n\times \R^n\times S^{n-1}$};\label{palle1}\\
	\limsup_{\substack{(y,a',b',\mu) \rightarrow (x,a,b,\nu)\\
	y\in \Omega  }} & \varphi(y,a',b',\mu)<+\infty \ \ \ &\text{for all $x,a,b,\nu \in \pa\Omega \times \R^n\times \R^n\times S^{n-1}$}.\label{palle2}
	\end{align}

\end{theorem}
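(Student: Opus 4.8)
The statement is a quoted density theorem (Cortesani--Toader), so in lieu of a proof I outline the strategy one would pursue. The plan is to descend through a chain of ever more regular competitors, controlling at each stage the three quantities behind 3), 4) and 5): the $L^2$ distance of $u_k$ to $u$, the $L^2$ distance of $\nabla u_k$ to the absolutely continuous part $\nabla u$, and the surface functional $\int_{\overline{A}\cap J_{u_k}}\varphi$ for $\varphi$ as in \eqref{palle1}--\eqref{palle2}. First I would apply a preliminary strong approximation by piecewise-smooth functions whose jump set lies in a finite union of $C^1$ hypersurface pieces (see \cite{cortesani1997strong}; cf. Theorem \ref{thm IUR}), reducing to the case $\overline{J_u}\cap\Omega=\bigcup_{i=1}^N\Sigma_i$ with the $\Sigma_i$ pairwise disjoint compact $C^1$ pieces and $u\in W^{m,\infty}$ on every connected component of $\Omega\setminus\bigcup_i\Sigma_i$, for all $m$.

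Working near one piece, I would straighten $\Sigma_i$ onto a portion of a hyperplane $H$ by a $C^1$ diffeomorphism and, via a partition of unity, reduce to the model configuration of a flat jump across an open subset of $H$ with smooth one-sided traces $u^\pm$. Choosing an inner polyhedral approximation $P_\delta\subset H$ of that subset by finitely many closed $(n-1)$-simplexes of mesh $\delta$, I would redefine the competitor so that it equals the old smooth $u$ away from a $\delta$-slab around $H$, has jump set $P_\delta$, has one-sided traces on $P_\delta$ equal to the interpolants of $u^\pm$ that are affine on each simplex, and interpolates smoothly in the signed-distance variable inside the slab. Pushing this back by the straightening diffeomorphism and summing over $i$ produces a competitor with polyhedral jump set; the simplexes are then made pairwise disjoint by shrinking each slightly and gluing $u$ smoothly across the resulting thin seams --- harmless because $u$ is already smooth on both sides there, so no new jump appears. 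A final mollification in the normal variable (parameter $\to 0$) lifts the regularity to $W^{m,\infty}$ on each side for every $m$; letting $\delta$ and the mollification parameter vanish, routine mollification and interpolation estimates deliver 1), 2), 3) and 4).

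For property 5), one checks that, along $J_u$, the normals $\nu_k$ converge to $\nu$ in $\H^{n-1}$-measure, the traces $u_k^\pm$ converge to $u^\pm$ $\H^{n-1}$-a.e., and $\H^{n-1}\llcorner J_{u_k}\rightharpoonup^*\H^{n-1}\llcorner J_u$ with $\H^{n-1}(J_{u_k})\to\H^{n-1}(J_u)$ (since each $P_\delta$ exhausts its flat piece from inside, so its pushforward exhausts $\Sigma_i$). Using $u\in L^\infty$ to confine the traces to a fixed ball (so that $\varphi$ is effectively bounded there), fixing orientations consistently via the symmetry \eqref{palle1}, and invoking \eqref{palle2} to bound the contribution of the few simplexes touching $\partial\Omega$ and of the closure $\overline{A}$, a Reshetnyak-type upper-semicontinuity argument --- cover $J_u$ by small balls on which $(x,u^+,u^-,\nu)$ is nearly constant, dominate the upper semicontinuous $\varphi$ from above by a slightly larger continuous map on each ball, and sum up --- yields $\limsup_k\int_{\overline{A}\cap J_{u_k}}\varphi\le\int_{\overline{A}\cap J_u}\varphi+o(1)$. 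Since the relevant topologies are metrizable and 5) only has to survive passage to the limit for each fixed admissible $\varphi$, a diagonal extraction over a nested sequence of the approximations above produces one sequence satisfying 1)--5) simultaneously.

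The crux is property 5), and within it two delicate points. First, flattening the $C^1$ pieces and then perturbing the polyhedral jump set to separate the simplexes must not inflate $\limsup\int\varphi$: this forces one to arrange genuine weak-$*$ convergence of the jump measures --- not merely closeness of the sets --- and to use the symmetry \eqref{palle1}. Second, simplexes touching $\partial\Omega$ threaten to concentrate surface energy at the boundary, and the growth condition \eqref{palle2} is precisely what rules this out. Everything else is a careful but essentially routine stack of mollification, interpolation and covering estimates.
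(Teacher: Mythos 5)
The paper does not prove this statement at all: Theorem \ref{thm CT} is quoted verbatim from \cite{cortesani1999density} (Theorem 3.1 there) and is used as a black box in the density argument of Proposition \ref{propo: density argument}, so there is no in-paper proof against which your outline can be matched. What you have written is a plausible roadmap in the spirit of the literature (a first reduction via Cortesani's strong approximation, then polyhedral approximation of the jump set, then a Reshetnyak-type upper-semicontinuity argument for property 5), with \eqref{palle1} fixing orientations and \eqref{palle2} controlling concentration at $\pa\Omega$), but it is a strategy sketch rather than a proof, and the actual argument of Cortesani--Toader is organized differently (a direct discretization construction in which the approximating jump sets are contained in faces of a triangulation, rather than straightening $C^1$ pieces and shrinking simplexes).

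Two steps of your sketch, as stated, have genuine gaps. First, the claim that making the simplexes pairwise disjoint by ``shrinking each slightly and gluing $u$ smoothly across the resulting thin seams'' is harmless ``because $u$ is already smooth on both sides'' is not correct as written: smoothness of the one-sided restrictions does not make the traces agree, so gluing across a seam of width $\tau$ where the jump amplitude is of order $1$ produces gradients of order $\tau^{-1}$ on a set of volume of order $\tau$, and the $L^2$ norm of the gradient then blows up like $\tau^{-1/2}$, destroying property 4). One must instead either keep the seam inside a region where the jump amplitude is already small, enlarge rather than shrink the polyhedral jump set, or use a genuinely different (capacity-free) construction as in the original paper; note that the capacity trick used elsewhere in this paper (Remark \ref{trick}) is only available because there the extra gradient is paid for by the damage variable, which is not at your disposal here. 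Second, for property 5) you assert weak-$*$ convergence of the jump measures together with $\H^{n-1}(J_{u_k})\to\H^{n-1}(J_u)$ and $\H^{n-1}$-a.e.\ convergence of traces and normals; after the two-stage approximation (first \ref{thm IUR}-type, then polyhedral) these facts are exactly what has to be proved, and they do not follow from ``$P_\delta$ exhausts its flat piece from inside'' alone once the seams and the simplexes meeting $\pa\Omega$ are modified. So the outline identifies the right difficulties but does not yet resolve them.
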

About these results,  references of interest are \cite{Ch1, CC2017}.
As a consequence we obtain the following result:
\begin{proposition}\label{propo: density argument}
For any function $u\in SBD^2(\Omega)\cap L^{\infty}(\Omega)$ there exists a sequence $u_k\in \Cl$ such that $u_k\rightarrow u$ in $L^2$ and
	\[
	\lim _{k\rightarrow +\infty} \F(u_k,1) = \F(u,1).
	\]
Moreover $\|u_k\|_{\infty}\leq \|u\|_{\infty}$.
\end{proposition}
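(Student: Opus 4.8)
The plan is to build $u_k$ by chaining the two density results at our disposal. Since $SBD^2(\Omega)\cap L^{\infty}(\Omega)\subset GSBD^2(\Omega)\cap L^{\infty}(\Omega)$, Theorem \ref{thm IUR} first provides a sequence $w_k\in SBV^2(\Omega;\R^n)\cap L^{\infty}(\Omega;\R^n)$ whose jump set is carried by finitely many pieces of $C^1$-hypersurfaces, and Theorem \ref{thm CT} then refines each $w_k$ into a sequence in $\Cl$; a diagonal extraction produces the final $u_k$. It is convenient to record $\F(\cdot,1)=\Phi$ as the sum of a bulk part $\int_{\Omega}[\A e(\cdot)\cdot e(\cdot)+F(x,e(\cdot),1)]\,\d x$ and a surface part $\int_{J_{\cdot}}\varphi(z,\cdot^{+},\cdot^{-},\nu)\,\d\H^{n-1}$, where
\[
\varphi(x,p,q,\nu):=2\sqrt{\alpha\psi(0)}\,\sqrt{\A((p-q)\odot\nu)\cdot((p-q)\odot\nu)}+2\int_{0}^{1}\psi(t)\,\d t+F_{\infty}(x,(p-q)\odot\nu),
\]
so that with $[u]=u^{+}-u^{-}$ this surface integrand coincides with $a\sqrt{\A([u]\odot\nu)\cdot([u]\odot\nu)}+b+F_{\infty}(z,[u]\odot\nu)$.

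For the first step I would check that $\F(w_k,1)\to\F(u,1)$. The bulk part is continuous along $w_k$: by item b) of Theorem \ref{thm IUR}, $e(w_k)\to e(u)$ in $L^{2}$, the map $M\mapsto\A M\cdot M$ is continuous on $L^{2}$, and $F(x,\cdot,1)$ is $\ell$-Lipschitz by the argument of Remark \ref{rmk: Lipschitz function}, whence $|\int_{\Omega}F(x,e(w_k),1)-\int_{\Omega}F(x,e(u),1)|\le\ell\|e(w_k)-e(u)\|_{L^{1}}\to 0$. For the surface part, item c) gives $\H^{n-1}(J_{w_k}\Delta J_{u})=0$, so all surface integrals live on $J_{u}$; item d) together with $\|w_k\|_{L^{\infty}}\le\|u\|_{L^{\infty}}$ yields $w_k^{\pm}\to u^{\pm}$ in $L^{1}(J_{u};\H^{n-1})$, after choosing the orientation of $\nu_{w_k}$ to agree $\H^{n-1}$-a.e. with $\nu_{u}$, hence $w_k^{+}-w_k^{-}\to[u]$ in $L^{1}(J_{u};\H^{n-1})$; since $\xi\mapsto\sqrt{\A(\xi\odot\nu)\cdot(\xi\odot\nu)}$ is a seminorm and $\xi\mapsto F_{\infty}(x,\xi\odot\nu)$ is convex, positively $1$-homogeneous and linearly bounded, hence Lipschitz, dominated convergence yields the convergence of the surface integrals and thus $\F(w_k,1)\to\F(u,1)$.

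For the second step, fix $k$ and apply Theorem \ref{thm CT} to $w_k\in SBV^2(\Omega;\R^n)\cap L^{\infty}(\Omega;\R^n)$: properties 1) and 2) say that the resulting sequence $\{w_{k,j}\}_{j}$ — after a componentwise truncation at level $\|u\|_{L^{\infty}}$, which preserves 1)--4) and only lowers the (nonnegative) surface integrand — lies in $\Cl$ with $\|w_{k,j}\|_{L^{\infty}}\le\|u\|_{L^{\infty}}$; by 3) and 4) the bulk part passes to the limit as before. For the surface part I would invoke property 5) with $A=\Omega$ and the $\varphi$ above, which is admissible: it is nonnegative because $\sqrt{\A\xi\cdot\xi}\ge\kappa^{-1/2}|\xi|$ and $F_{\infty}(x,\xi)\ge-\sigma|\xi|$, so $2\sqrt{\alpha\psi(0)}\sqrt{\A\xi\cdot\xi}+F_{\infty}(x,\xi)\ge(2\sqrt{\alpha\psi(0)/\kappa}-\sigma)|\xi|\ge 0$ by the rightmost inequality in \eqref{bound on p} while $2\int_{0}^{1}\psi\ge 0$; it is continuous in all its arguments (for $F_{\infty}$ by Proposition \ref{propo: justification of limit}), hence upper semicontinuous; it satisfies \eqref{palle1} since $(q-p)\odot(-\nu)=(p-q)\odot\nu$; and it satisfies \eqref{palle2} since it is finite and grows at most linearly in $(p,q)$. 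Property 5) then gives $\limsup_{j}\F(w_{k,j},1)\le\F(w_k,1)$, and a diagonal choice $u_k:=w_{k,j(k)}$ with $\|u_k-w_k\|_{L^{2}}\le 1/k$ and $\F(u_k,1)\le\F(w_k,1)+1/k$ yields $u_k\in\Cl$, $\|u_k\|_{L^{\infty}}\le\|u\|_{L^{\infty}}$, $u_k\to u$ in $L^{2}$ and $\limsup_{k}\F(u_k,1)\le\F(u,1)$. The matching $\liminf$ follows from results already established: each $u_k\in\Cl$ admits, by Proposition \ref{propo: approx of nice function}, a recovery pair $(u_{k,\e},v_{k,\e})$ with $u_{k,\e}\to u_k$, $v_{k,\e}\to 1$ in $L^{1}$ and $\F_{\e}(u_{k,\e},v_{k,\e})\to\F(u_k,1)$; picking $\e_{k}\downarrow 0$ fast enough so that $u_{k,\e_{k}}\to u$, $v_{k,\e_{k}}\to 1$ in $L^{1}$ and $\F_{\e_{k}}(u_{k,\e_{k}},v_{k,\e_{k}})=\F(u_k,1)+o(1)$, the liminf inequality of Theorem \ref{thm: liminf inequality} forces $\liminf_{k}\F(u_k,1)\ge\F(u,1)$, and combining the two bounds gives $\F(u_k,1)\to\F(u,1)$.

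I expect the main difficulty to be the second step: packaging the three distinct surface contributions — the anisotropic term $\sqrt{\A\cdot}$, the constant $b$, and the recession term $F_{\infty}$ — into a single density $\varphi$ that simultaneously meets all the hypotheses of Theorem \ref{thm CT}, notably its nonnegativity (which is exactly where the constitutive restriction \eqref{bound on p} on $\sigma$ enters) and its upper semicontinuity (which rests on the continuity of $F_{\infty}$ from Proposition \ref{propo: justification of limit}), together with the orientation bookkeeping on $J_{u}$ that turns the trace estimate d) of Theorem \ref{thm IUR} into honest $L^{1}(J_{u};\H^{n-1})$-convergence of the jump couples. Everything else is a routine continuity/truncation/diagonalization argument.
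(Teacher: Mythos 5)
Your proposal is correct and follows essentially the same route as the paper: first the reduction to $SBV^2$ via Theorem \ref{thm IUR} with the Lipschitz/$L^2$ continuity of the bulk and surface integrands, then the regularization via Theorem \ref{thm CT} with the jump terms bundled into an admissible nonnegative $\varphi$, a truncation to keep the $L^\infty$ bound, and the matching lower bound obtained from Theorem \ref{thm: liminf inequality} through recovery sequences and a diagonal extraction. If anything, you are more explicit than the paper on two points it glosses over — the verification that $\varphi\geq 0$ via the rightmost inequality in \eqref{bound on p}, and the diagonal construction behind the liminf — so no changes are needed.
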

\begin{proof}
We will apply Theorem \ref{thm IUR} and \ref{thm CT} to improve the regularity of our sequence. We divide the proof in two steps.\\
\text{}\\
\textbf{Step one:} \textit{reduction to $SBV^2$}. Let $u\in SBD^2(\Omega)\cap L^{\infty}(\Omega) \subset GSBD^2(\Omega)\cap L^2(\Omega)$. Then, by applying Theorem \ref{thm IUR} we find a sequence of functions $\{w_k\}_{k\in \N}\subset SBV^2(\Omega)\cap L^{\infty}(\Omega)$ such that properties $a)$-$d)$ of Theorem \ref{thm IUR} hold. We have that
	\begin{align*}
	\mathcal{F}_k(w_k,1) =& \int_{\Omega} \A e(w_k)\cdot e(w_k) \d x \\
	&+2\sqrt{\alpha\psi(0)}\int_{J_{u_k}} \sqrt{\A ([w_k](z)\odot \nu_k(z)) \cdot ([w_k](z)\odot \nu_k(z)) } \d \H^{n-1}(z)\\
&+ 2\left(\int_{0}^1 \psi(t) \d t \right)\H^{n-1}(J_{w_k}) +\int_{\Omega} F(x,e(w_k),1)   \d x\\
	& + \int_{J_{w_k}} F_{\infty}(z,[w_k]\odot \nu_k)  \d \H^{n-1}(z).
	\end{align*}
In particular, because of property $b)$ we can infer that $\mathbb{A}e(w_k)\cdot e(w_k) \rightarrow \mathbb{A}e(u)\cdot e(u) $ in $L^1$ and that $F(x,e(w_k),1)\rightarrow F(x,e(u),1)$ where we exploited the fact that $F$ is a Lipschitz function (thanks to Remark \ref{rmk: Lipschitz function}). This allows us to write
	\begin{align}
	\lim_{k\rightarrow+\infty} & \int_{\Omega} \A e(w_k)\cdot e(w_k) \d x + 2\left(\int_{0}^1 \psi(t) \d t \right)\H^{n-1}(J_{w_k}) +\int_{\Omega} F(x,e(w_k),1)   \d x \nonumber\\
&= \int_{\Omega} \A e(u)\cdot e(u) \d x + 2\left(\int_{0}^1 \psi(t) \d t \right)\H^{n-1}(J_{u}) +\int_{\Omega} F(x,e(u),1)   \d x.\label{eqn:Pessoa}
	\end{align}
Since the function $u$ is $L^{\infty}$ and $\|w_k\|_{L^{\infty}}\leq \|u\|_{L^{\infty}}$ we have that (because of property $d)$ of our sequence)
	\begin{align}
	\int_{J_{w_k} \cup J_u} | [w_k] \odot \nu_k -  [u] \odot \nu | \d \H^{n-1}(z)  \leq& \int_{J_{w_k} \cup J_u} | ( [w_k] - [u] ) \odot \nu_k | \d\H^{n-1}(z)\nonumber \\
	&+\int_{  J_{w_k} \cup J_u} |[u] \odot (\nu-\nu_k) | \d \H^{n-1}(z) \nonumber\\
	 \leq& \int_{J_{w_k} \cup J_u } |[w_k]-[u]|\d \H^{n-1}(z) \nonumber\\	
	 &+\int_{  J_{w_k} \cap J_u} |[u] \odot (\nu-\nu_k) | \d \H^{n-1}(z)  \nonumber \\
	  &+\|u\|_{L^{\infty}}\H^{n-1}(J_{w_k}\Delta J_u) \rightarrow 0, \label{eqn:fernando}
	\end{align}
since $\nu_k=\nu$ $\H^{n-1}$-a.e. on $J_{w_k}\cap J_u$. The functions $\sqrt{\A M \cdot M}$ and $F_{\infty}$ are $1-$homogeneous and convex and thus Lipschitz on $M_{sym}^{n\times n}$ (Remark \ref{rmk: Lipschitz function}). Then
\begin{align*}
&|\sqrt{\A ([w_k](z)\odot \nu_k(z)) \cdot ([w_k](z)\odot \nu_k(z)) }  -\sqrt{\A ([u](z)\odot \nu(z)) \cdot ([u](z)\odot \nu(z)) } |\\
		& + | F_{\infty}(z,[w_k]\odot \nu_k)  - F_{\infty}(z,[u]\odot \nu)| \leq C | [w_k] \odot \nu_k - [u]\odot\nu|
\end{align*}		
that integrated over $J_{w_k}\cup J_u$ and passed  to the limit yields by \eqref{eqn:fernando}
 \begin{align}
  \lim_{k\rightarrow+\infty} &2\sqrt{\alpha\psi(0)}\int_{J_{w_k}} \sqrt{\A ([w_k](z)\odot \nu_k(z)) \cdot ([w_k](z)\odot \nu_k(z)) } \d \H^{n-1}(z)\nonumber\\
& \ \ \ \ \ + \int_{J_{w_k}} F_{\infty}(z,[w_k]\odot \nu_k)  \d \H^{n-1}(z)=\int_{J_{u}} F_{\infty}(z,[u]\odot \nu)  \d \H^{n-1}(z)\nonumber\\
& \ \ \ \ \ \ \ \ \ \ \ \ \ \ \ \  + 2\sqrt{\alpha\psi(0)}\int_{J_{u}} \sqrt{\A ([u](z)\odot \nu(z)) \cdot ([u](z)\odot \nu(z)) } \d \H^{n-1}(z).\label{eqn:saramago}
 \end{align}
By virtue of \eqref{eqn:Pessoa} and \eqref{eqn:saramago} we have produced a sequence $w_k$ such that $w_k\rightarrow u$ in $L^2$ and
	\begin{equation}\label{limione}
	\lim_{k\rightarrow +\infty} \F(w_k,1)=\F(u,1).
	\end{equation}
	\text{}\\
\textbf{Step two:} \textit{regularization to $\Cl$}. For any $w=w_k$ produced in step one we can produce, by applying Theorem \ref{thm CT}, a sequence $\{u_k\}_{k\in \N}$ such that $u_k\in \Cl$ and satisfying $1)-5)$ of Theorem \ref{thm CT}. In particular $u_k\rightarrow w$ in $L^2$ and, thanks to property $4)$ and $5)$ we obtain
	\begin{align*}
	\limsup_{k\rightarrow +\infty} & \int_{\Omega} \A e(u_k)\cdot e(u_k) \d x \\
	&+2\sqrt{\alpha\psi(0)}\int_{J_{u_k}} \sqrt{\A ([u_k](z)\odot \nu_k(z)) \cdot ([u_k](z)\odot \nu_k(z)) } \d \H^{n-1}(z)\\
&+ 2\left(\int_{0}^1 \psi(t) \d t \right)\H^{n-1}(J_{u_k}) +\int_{\Omega} F(x,e(u_k),1)   \d x + \int_{J_{u_k}} F_{\infty}(z,[u_k]\odot \nu_k)  \d \H^{n-1}(z)\\
	& \leq \int_{\Omega} \A e(w)\cdot e(w) \d x +2\sqrt{\alpha\psi(0)}\int_{J_{w}} \sqrt{\A ([w](z)\odot \nu(z)) \cdot ([w](z)\odot \nu(z)) } \d \H^{n-1}(z)\\
&+ 2\left(\int_{0}^1 \psi(t) \d t \right)\H^{n-1}(J_{w}) +\int_{\Omega} F(x,e(w),1)   \d x+ \int_{J_{w}} F_{\infty}(z,[w]\odot \nu )  \d \H^{n-1}(z)\\
	\end{align*}
where we have exploited the fact that $F$ is Lipschitz continuous (as in step one) and that the function
	\[
	\varphi(z,a,b,\nu):=2\sqrt{\alpha\psi(0)}\sqrt{\A ((a-b)\odot \nu ) \cdot ( (a-b)\odot \nu ) } + F_{\infty}(z,(a-b)\odot \nu)
	\]
is always positive (due to the hypothesis \ref{hyp on F} on $F$) and satisfies assumptions \eqref{palle1}, \eqref{palle2}. By possibly passing to the truncated $\hat{u}_k(x)=\max\{u_k(x),\|w\|_{\infty}\}$ the above inequality is preserved together with the condition $\|\hat{u}_k\|_{\infty}\leq \|w\|_{\infty}\leq\|u\|_{\infty}$. By taking into account Theorem \ref{thm: liminf inequality}, it is deduced that
	\begin{equation}\label{limitwo}
	\lim_{k\rightarrow +\infty} \F(\hat{u}_k,1)=\F(w,1).
	\end{equation}
By combining \eqref{limione}, \eqref{limitwo} with a diagonalization argument on $\hat{u}_k, w_j$ we can produce the sought sequence.
\end{proof} 
We are thus in the position to state the $\limsup$ upper bound and provide a recovery sequence for functions $u\in L^{\infty}(\Omega;\R^n)$.
\begin{theorem}\label{thm: limsup inequality}
Let $\{\e_j\}_{j\in \N}$ be a vanishing sequence of real numbers. Then, for any $u\in SBD^2(\Omega)\cap L^{\infty}(\Omega)$ there exists a subsequence $\{\e_{j_k}\}_{k\in \N}\subset \{\e_{j}\}_{j\in \N}$ and a sequence of function $(u_{k},v_{k})\in H^1(\Omega;\R^n) \times V_{\e_{j_k}}$ such that
	\[
	\e_{j_k} \rightarrow 0, \ \ \ \|u_k-u\|_{L^2}\rightarrow 0, \ \ \ \|v_k-1\|_{L^2}\rightarrow 0
	\]
and
	\[
	\lim_{\k\rightarrow +\infty} \F_{\e_{j_k} }(u_{k},v_{k}) = \F(u,1).
	\]
Moreover $\|u_k\|_{{\infty}}\leq \|u\|_{{\infty}}$.
\end{theorem}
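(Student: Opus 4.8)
The plan is to obtain the recovery sequence for a general $u\in SBD^2(\Omega)\cap L^{\infty}(\Omega)$ by combining the two results already at our disposal and extracting a diagonal sequence. First I would invoke Proposition \ref{propo: density argument} to produce $w_k\in\Cl$ with $w_k\to u$ in $L^2$, $\|w_k\|_{\infty}\le\|u\|_{\infty}$ and $\F(w_k,1)\to\F(u,1)$. Then, for each fixed $k$, I would apply Proposition \ref{propo: approx of nice function} to $w_k\in\Cl$ along the prescribed vanishing sequence $\{\e_j\}_{j\in\N}$: the explicit construction \eqref{eqn: rcvry v regular}--\eqref{eqn: rcvry u regular}, together with the harmless corrections of Remarks \ref{trick}, \ref{Reg} and \ref{rmk costraint ve}, is defined for every small $\e>0$ and hence can be run verbatim along $\{\e_j\}_{j\in\N}$, yielding a family $(u_{k,j},v_{k,j})\in H^1(\Omega;\R^n)\times V_{\e_j}$ with $\|u_{k,j}\|_{\infty}\le\|w_k\|_{\infty}$, $u_{k,j}\to w_k$ in $L^2$, and $\F_{\e_j}(u_{k,j},v_{k,j})\to\F(w_k,1)$ as $j\to+\infty$. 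Since $0\le v_{k,j}\le 1$ and $v_{k,j}\to 1$ in $L^1$, dominated convergence upgrades this last convergence to $L^2$.

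The second step is the diagonalization. For each $k$ I would pick an index $j(k)$, strictly increasing in $k$, large enough that the three errors $\big|\F_{\e_{j(k)}}(u_{k,j(k)},v_{k,j(k)})-\F(w_k,1)\big|$, $\|u_{k,j(k)}-w_k\|_{L^2}$ and $\|v_{k,j(k)}-1\|_{L^2}$ are all smaller than $1/k$, and then set $\e_{j_k}:=\e_{j(k)}$, $u_k:=u_{k,j(k)}$, $v_k:=v_{k,j(k)}$. Monotonicity of $k\mapsto j(k)$ makes $\{\e_{j_k}\}_{k\in\N}$ a genuine subsequence of $\{\e_j\}_{j\in\N}$ with $\e_{j_k}\to 0$; two triangle inequalities, using $w_k\to u$ in $L^2$ and $\F(w_k,1)\to\F(u,1)$, then give $u_k\to u$ and $v_k\to 1$ in $L^2$ and $\F_{\e_{j_k}}(u_k,v_k)\to\F(u,1)$, while $\|u_k\|_{\infty}\le\|u\|_{\infty}$ is inherited from the construction.

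I do not expect a genuine obstacle at this stage, since all the substantive work — the explicit damage profile, the optimal choice of the layer thickness $\vartheta$, and the density argument in $\Cl$ — has already been carried out in Propositions \ref{propo: approx of nice function} and \ref{propo: density argument}. The only points requiring a little care are the bookkeeping of the diagonal extraction (in particular keeping $j(k)$ strictly increasing so that one remains inside the prescribed sequence $\{\e_j\}_{j\in\N}$) and the observation that the diagonal sequence realizes the value $\F(u,1)$ as a true limit, so that the $\Gamma$-$\liminf$ inequality of Theorem \ref{thm: liminf inequality} is not even needed here — it was invoked only inside the proof of Proposition \ref{propo: density argument}.
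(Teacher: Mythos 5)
Your proposal is correct and follows essentially the same route as the paper: approximate $u$ by $w\in\Cl$ via Proposition \ref{propo: density argument}, run the explicit recovery construction of Proposition \ref{propo: approx of nice function} for each such $w$, and extract a diagonal sequence along the prescribed $\{\e_j\}_{j\in\N}$, with the $L^\infty$ bound and the upgrade of the $v$-convergence to $L^2$ handled exactly as in the paper's argument. The only cosmetic difference is that you quantify the diagonal extraction through a strictly increasing $j(k)$, whereas the paper packages the same bookkeeping in the single inequality \eqref{alakazam}.
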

\begin{proof}
We prove that, for any $k>0$ there exists an $\e_{j_k}$ and $(u_{k},v_{k})\in H^1(\Omega;\R^n)\times V_{\e_{j_k}}$ such that
	\begin{equation}\label{alakazam}
	\|v_{k} - 1\|_{L^1} + \|u_{k} - u\|_{L^2} + |\F_{\e_{j_k}}(u_{k} ,v_{k}) - \F(u,1)| +\e_{j_k} \leq \frac{1}{k}. 
	\end{equation}
This would complete the proof. According to Proposition \ref{propo: density argument}, for any fixed $k>0$ we can find $w\in \Cl$ with $\|w\|_{\infty}\leq \|u\|_{\infty}$ such that
	\begin{equation}\label{kadabra}
	|\F(w,1)-\F(u,1)|+\|w-u\|_{L^2}\leq\frac{1}{2k}.
	\end{equation}
The sequence $(u_{\e},v_{\e})\in H^1(\Omega)\times V_{\e}$ as defined in \eqref{eqn: rcvry v regular}, \eqref{eqn: rcvry u regular} (thanks to Proposition \ref{propo: approx of nice function}) provides
	\[
	\lim_{\e\rightarrow 0} \F_{\e}(u_{\e},v_{\e})=\F(w,1),
	\]
with $w \in \Cl$, and satisfies $\|u_{\e}\|_{L^{\infty}}\leq \|w\|_{L^{\infty}}\leq \|u\|_{L^{\infty}}$. In particular, we can find an $\e_{0}(k)$ such that
	\[
	\|u_{\e}-w\|_{L^2}+\|v_{\e}- 1\|_{L^2} + |\F_{\e} (u_{\e},v_{\e})-\F(w,1)|\leq \frac{\delta}{4} \ \ \ \text{for all $\e<\e_0$}.
	\]
We can select an $\e_{j_k}<\e_0(k)$, since $\e_j$ is vanishing,  such that 
	\begin{equation}\label{abra}
	\|u_{\e_{j_k}}-w\|_{L^2}+\|v_{\e_{j_k}}- 1\|_{L^2} + |\F_{\e_{j_k}} (u_{\e_{j_k}},v_{\e_{j_k}})-\F(w,1)|+\e_{j_k}\leq \frac{1}{2k}.
	\end{equation}
By combining \eqref{abra} and \eqref{kadabra} and by setting $u_k=u_{\e_{j_k}}$, $v_k=v_{\e_{j_k}}$ we obtain \eqref{alakazam}. 
\end{proof}

\section{Compactness result and minimum problem}\label{CER}
\subsection{Compactness}\label{sct Comp}
This section is devoted to the proof of Theorem \ref{main thm: comp}. 
\begin{proof}[Proof of Theorem \ref{main thm: comp}]
From \cite[Theorem II.2.4]{suquet1981equations} we obtain $L^1$-compactness from uniform $BD$-boundedness. In particular notice that, if $(u_{\e},v_{\e})\in H^1(\Omega;\R^n) \times V_{\e}$ satisfies 
$\sup_{\e}\{\|u_{\e}\|_{L^1}+\F(u_{\e},v_{\e})\}<+\infty$, 
then, thanks to Proposition \ref{prop: bnd energy AT}, we have
$\sup_{\e}\{\mathcal{W}(u_{\e},v_{\e})\}<+\infty$. By then arguing as in the proof of Proposition \ref{prop: information on convergence} we can retrieve relations  \eqref{eqn: inside out}  and \eqref{eqn: inside in}  that imply $
	\sup_{\e>0}\left\{ \int_{\Omega} |e(u_{\e})|\d x\right\}<+\infty$.
This, combined with the uniform $L^1$ upper bound on $u_{\e}$ gives a uniform bound on the $BD$ norm leading to $L^1$ compactness of $u_{\e}$. Moreover $
	\sup_{\e}\{\mathcal{W}(u_{\e},v_{\e})\}\geq \frac{1}{\e}\int_{\Omega} \psi(v_{\e}) \d x$
implying that $\psi(v_{\e}) \rightarrow 0$ in measure and thus $v_{\e}\rightarrow 1$ in measure. Then there is a subsequence converging to $1$ almost everywhere and due to the boundedness of $v_{\e}$ we have (up to a subsequence) $v_{\e} \rightarrow 1$ in $L^1$. In particular we have shown that, up to a subsequence (not relabeled), it holds $u_{\e} \rightarrow u$, $v_{\e}\rightarrow 1$ in $L^1$ and $Eu_{\e} \rightharpoonup^* Eu$. By applying Proposition \ref{prop: information on convergence} we obtain $u\in SBD^2(\Omega)$.

%and that, for any $\lambda\in (0,1)$
%	\[
%	\sup_{\e} \left\{\int_{\Omega\setminus \Omega_{\e}^{\lambda}} % |e(u_{\e})|^2 \d x\right\}<+\infty
% 	\]
%implying $e(u_{\e})\ca_{\{v_{\e}\geq \lambda\}} \rightharpoonup e$ in $L^2(\Omega;M_{sym}^{n\times n}$ (for some $e$).  Notice now that $u_{\e} \ca_{\{v_{\e}\geq \lambda\}}\rightarrow u$ in $L^1$ and thus $e=e(u)$, achieving the proof.
\end{proof}

\subsection{Statement of the minimum problem}\label{Exist}
We discuss the issue of existence of minimizers under Dirichlet boundary condition. We restrict ourselves to smooth boundary data on an open bounded set having smooth boundary.
%
%\begin{definition}[Admissible domain]\label{domini}
%An open set $\Omega$ is said to be an \textit{admissible domain} if there exists a $\delta_0>0$ such that
%	\[
%	 \Omega \cc (1+\delta)\Omega \ \ \ \text{for all $0\leq \delta<\delta_0$}
%	\]
%and $(1+\delta)\pa \Omega \cap (1+\eta)\pa \Omega = \emptyset$ for every $\delta\neq \eta$.
%\end{definition}
%\begin{remark}
%Notice that any smooth set $\Omega$ star-shaped with respect to $P$ and such that
%	\[
%	(x-P)\cdot \nu_{\Omega}(x)>0 \ \ \ \ \text{for all $x\in \pa \Omega$}
%	\]
%satisfies the geometric property \ref{domini} up to a translation $x\mapsto x+P$. In particular any strictly convex set is included in our analysis.
%\end{remark}
%We send the reader to the appendix \ref{sct: appendix:scaling} where a brief characterization (Proposition \ref{propo:criteria}) to determine whether a set does have such a property is provided.\\

From now on the set $\Omega$ will  be assumed to be an open bounded set with $C^1$ boundary.  Assume that $\mathbb{A},F,\psi$ are as in \ref{sct:settings}.  On the potential $F$ we require additionally that 
	\begin{align}
	& \text{•} \ F(x,\cdot,v) \ \ \  \text{is convex for all $(x,v)\in \Omega\times [0,1]$}\label{1.a}\\
%	1.b)& \ \lim_{s\rightarrow s_0} \omega_F(s;s_0)=0, \  \ \forall \  \ s_0\in [0,1];
 &\text{•} \ \rho=\sup\left\{\frac{|F(x,M,v)-F(y,M,v)|}{|M||x-y|} \ | \ x,y\in \Omega, \ M\in \M_{sym}^{n\times n} , \ v\in [0,1]\right\}<+\infty.\label{1.b}
	\end{align}
and that, having fixed, for $s,t\in (0,1)$,
    \[
     \omega_F(s;t):=\sup\left\{\frac{|F(x,M,s)-F(x,M,t)|}{|M|} \ | \ (x,M)\in \Omega\times \M_{sym}^{n\times n} \right\}
    \]
it holds 
\begin{equation}\label{1.c}
    \lim_{s\rightarrow t}  \omega_F(s;t) =0 \ \ \ \ \text{for all $t\in (0,1)$}.
\end{equation}
%where
%	\[
%	\omega_F(s;s_0):=\sup\left\{\frac{F(x,M,s)-F(x,M,s_0)}{|M|} \ : \ (x,M)\in \Omega \times M_{sym}^{n\times n} \right\}.
%	\]
Consider, for fixed $d\in \R$ the following infimum problems
	\begin{align*}
	\gamma_{\e}&:= \inf\left\{ \F_{\e}(u,v) \ |  \ u=f, \ v=1 \ \text{on $\pa \Om$},  \ (u,v)\in H^{1}(\Omega;\R^n)\times V_{\e}, \ \|u\|_{L^{\infty}}\leq d \right\},\\
	\gamma_{0}&:=\inf\left\{ \F(u,1) + \mathcal{R}(u,f)\ |  \ u\in SBD^2(\Omega), \ \|u\|_{L^{\infty}}\leq d \right\},
	\end{align*}
%\begin{align*}
%\Phi(\hat{u}):=& \int_{\Omega}[ \mathbb{A} e(u)\cdot e(u) + F(x,u,1)]\d x\\
%& + a\int_{J_u} \sqrt{\mathbb{A}[u]\odot \nu \cdot [u]\odot \nu}\d \H^{n-1}(z)\\
%&+b \H^{n-1}(J_u)+ \int_{J_u} F_{\infty}(z,[u]\odot\nu)\d \H^{n-1}(z)\\
%&+ \int_{\hat{\Omega}\setminus \Omega } [\mathbb{A} e(u)\cdot e(u)+ F(x,u,1)] \d x\\
%&+a\int_{\pa \Omega} \sqrt{\mathbb{A}[u-f]\odot \nu \cdot [u-f]\odot \nu}\d \H^{n-1}(z)\\
%&+b\H^{n-1}( \{x\in\pa \Omega \ | \ u^{\pm} \neq f \}) +\int_{\pa \Omega} F_{\infty}(z,[u-f]\odot \nu) \d \H^{n-1}(z)
%\end{align*}
where 
	\begin{equation}
	\begin{split}
	\mathcal{R}(u,f):=& \int_{\pa \Omega} F_{\infty}(z,[u-f]\odot \nu) \d \H^{n-1}(z)+b\H^{n-1}( \{x\in\pa \Omega \ | \ u(x) \neq f(x) \}) \\
	&+a\int_{\pa \Omega} \sqrt{\mathbb{A}[u-f]\odot \nu \cdot [u-f]\odot \nu}\d \H^{n-1}(z).
	\end{split}
	\end{equation}
Notice that the additional term $\mathcal{R}(\cdot,f)$ is the price that a function has to pay in order to detach from the boundary datum $f$ on $\pa \Omega$. Then the following Theorem holds true.

\begin{theorem}\label{thm: existence}
For every $\e> 0$ there exists minimizers $(u_{\e},v_{\e})$ for $\gamma_{\e}$. Moreover 
	\begin{equation}\label{eqn: limit min}
	\lim_{\e\rightarrow 0} \gamma_{\e}=\gamma_0
		\end{equation}
and any accumulation point of $\{(u_{\e},v_{\e})\}_{\e>0}$ is of the form $(u_0,1)$ with $u_0$ a minimum for $\gamma_0$.
\end{theorem}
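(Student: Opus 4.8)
The plan is to prove the three assertions in turn, deriving the last two from the $\Gamma$-convergence and compactness results already obtained (Theorems \ref{thm: liminf inequality}, \ref{thm: limsup inequality} and \ref{main thm: comp}) by enlarging the domain. For the existence of $(u_\e,v_\e)$ at fixed $\e>0$ I would run the direct method. The pair $(f,1)$ is admissible, so $\gamma_\e<+\infty$; by Proposition \ref{prop: bnd energy AT}, $\F_\e\ge\delta\,\mathcal W_\e-1\ge-1$, so $\gamma_\e>-\infty$. If $(u_j,v_j)$ is a minimizing sequence, Proposition \ref{prop: bnd energy AT} bounds $\mathcal W_\e(u_j,v_j)$, whence $\int_\Omega|e(u_j)|^2\le C/(\alpha\e)$; since $u_j=f$ on $\pa\Omega$, Korn's inequality gives a bound in $H^1(\Omega;\R^n)$ and $u_j\rightharpoonup u$ up to a subsequence. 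The $v_j$ are equibounded in $W^{1,\infty}(\Omega)$ and therefore converge uniformly to some $v$ with $\alpha\e\le v\le1$, $|\nabla v|\le1/\e$ a.e.\ and $v=1$ on $\pa\Omega$ (the non-closed constraint $\alpha\e<v$ being handled by passing to the closure of $V_\e$, which does not alter $\gamma_\e$). The bound $\|u\|_{L^\infty}\le d$ and the Dirichlet datum pass to the limit, and $\F_\e$ is sequentially lower semicontinuous along this convergence: convexity of $M\mapsto\A M\cdot M$ and — by \eqref{1.a} — of $M\mapsto F(x,M,v)$ make $\int v_j\A e(u_j)\cdot e(u_j)$ and $\int F(x,e(u_j),v_j)$ lower semicontinuous, the uniform convergence $v_j\to v\ge\alpha\e>0$ being absorbed through the modulus $\omega_F(\cdot\,;\cdot)$ of \eqref{1.c}, while $\frac1\e\int\psi(v_j)\to\frac1\e\int\psi(v)$ by continuity of $\psi$. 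Hence $(u,v)$ minimizes $\gamma_\e$.

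Next I would fix an open bounded Lipschitz set $\Omega'$ with $\Omega\cc\Omega'$, extend $f$ smoothly, set $C:=\Omega'\setminus\overline\Omega$, and write $V_\e(\Omega')$ for the analogue of $V_\e$ on $\Omega'$. For $w\in SBD^2(\Omega)$, $\|w\|_{L^\infty}\le d$, let $\widetilde w$ equal $w$ on $\Omega$ and $f$ on $C$; since $\pa\Omega$ is $C^1$ one has $\widetilde w\in SBD^2(\Omega')$, the jump of $\widetilde w$ across $\pa\Omega$ being nontrivial exactly on $\Sigma:=\{x\in\pa\Omega : w(x)\neq f(x)\}$ (traces), with amplitude $w-f$, so that, using $F_\infty(z,0)=0$,
\[
\F(\widetilde w,1;\Omega')=\F(w,1;\Omega)+E_f+\mathcal R(w,f),\qquad E_f:=\F(f,1;C)<+\infty .
\]
Dually, every pair admissible for $\gamma_\e$ extended by $(f,1)$ on $C$ becomes a pair $(\widetilde u,\widetilde v)\in H^1(\Omega';\R^n)\times V_\e(\Omega')$ with $\F_\e(\widetilde u,\widetilde v;\Omega')=\F_\e(u,v;\Omega)+E_f$. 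The lower bound $\liminf_\e\gamma_\e\ge\gamma_0$ then follows: along a subsequence realizing $\liminf_\e\gamma_\e$ the minimizers satisfy $\gamma_\e\le\F_\e(f,1;\Omega)\le C$ (finite and $\e$-independent since $\psi(1)=0$) and $\|u_\e\|_{L^1}\le d|\Omega|$, so Theorem \ref{main thm: comp} on $\Omega'$ gives, up to a further subsequence, $\widetilde u_\e\to\widetilde u$ in $L^1(\Omega')$ with $\widetilde u\in SBD^2(\Omega')$ and $\widetilde u=f$ on $C$; setting $u:=\widetilde u|_\Omega$ (so $\|u\|_{L^\infty}\le d$) and invoking Theorem \ref{thm: liminf inequality} on $\Omega'$,
\[
\liminf_\e\gamma_\e+E_f=\liminf_\e\F_\e(\widetilde u_\e,\widetilde v_\e;\Omega')\ge\F(u,1;\Omega)+E_f+\mathcal R(u,f)\ge\gamma_0+E_f .
\]

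For the upper bound, given $w$ almost optimal for $\gamma_0$, I would build a recovery sequence for $\widetilde w$, adapting the construction of Section \ref{sct Limsup} so that the transition layer for the portion of $J_{\widetilde w}$ lying on $\pa\Omega$ is one-sided and contained in $\Omega$: this produces $(u_\e,v_\e)\in H^1(\Omega;\R^n)\times V_\e$ with $u_\e=f$ and $v_\e=1$ on $\pa\Omega$, $\|u_\e\|_{L^\infty}\le d$ after truncation, and $\F_\e(u_\e,v_\e;\Omega)\to\F(w,1;\Omega)+\mathcal R(w,f)$; taking the infimum over $w$ yields $\limsup_\e\gamma_\e\le\gamma_0$, hence $\gamma_\e\to\gamma_0$. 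Finally, if $(u_{\e_k},v_{\e_k})$ are minimizers and $u_{\e_k}\to u_0$ in $L^1$ (such a convergent subsequence exists by the compactness used above, and necessarily $v_{\e_k}\to1$), the lower-bound computation gives $\gamma_0=\lim_k\gamma_{\e_k}\ge\F(u_0,1;\Omega)+\mathcal R(u_0,f)\ge\gamma_0$, so $u_0$ minimizes $\gamma_0$.

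The main obstacle is the upper bound: the recovery pair must meet the Dirichlet condition \emph{exactly} on $\pa\Omega$, so the two-sided construction of Section \ref{sct Limsup} has to be replaced, in a neighbourhood of $\pa\Omega$, by a one-sided variant supported in $\Omega$ that realizes the jump $w-f$ along $\pa\Omega$ while keeping $u_\e=f$ and $v_\e=1$ on $\pa\Omega$; checking that this variant still respects $|\nabla v_\e|\le1/\e$ and produces the correct coefficients $a,b$ together with the term $\int_{\pa\Omega}F_\infty(z,(w-f)\odot\nu)$ is the delicate point. A minor additional care is needed to transfer the hypotheses on $F$ from $\Omega$ to the collar $C$ when applying Theorems \ref{thm: liminf inequality} and \ref{thm: limsup inequality} on $\Omega'$, which is why on $C$ one only ever evaluates $F$ at the smooth datum $(e(f),1)$.
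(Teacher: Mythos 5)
Your Step one (direct method for $\gamma_\e$) and your liminf argument (extend admissible pairs by the datum on a collar $C=\Omega'\setminus\overline\Omega$, apply Theorem \ref{main thm: comp} and Theorem \ref{thm: liminf inequality} on the enlarged domain, and subtract the fixed collar energy $E_f$) are correct and essentially identical to the paper's Steps one and two, as is the final identification of accumulation points of minimizers. The genuine gap is exactly where you place "the main obstacle": the $\limsup$ bound. You assert that one can adapt the construction of Section \ref{sct Limsup} into a one-sided transition layer contained in $\Omega$ that realizes the jump $w-f$ along $\pa\Omega$ while keeping $u_\e=f$, $v_\e=1$ on $\pa\Omega$, but you never carry this out; since this is the only part of the theorem not already contained in Theorems \ref{thm: liminf inequality}--\ref{main thm: comp}, leaving it as a flagged "delicate point" means the heart of the proof is missing. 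Moreover, your reduction is stated for "$w$ almost optimal for $\gamma_0$", i.e.\ a general $w\in SBD^2(\Omega)\cap L^\infty$: any layer construction near $\pa\Omega$ needs $w$ regular up to the boundary, so you also need a density step in $\Cl$ that is compatible with the boundary penalty $\mathcal R(\cdot,f)$. This is nontrivial, because the Cortesani--Toader/Iurlano approximants do not keep the "boundary jump" on $\pa\Omega$; the paper resolves it in Proposition \ref{approx to Cl} by approximating the extension $\hat u=u\ca_{\Omega}+w\ca_{\hat\Omega\setminus\Omega}$ on a larger domain and using property 5) of Theorem \ref{thm CT} up to $\ov\Omega$ (via \eqref{palle2}). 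Your proposal is silent on this step.

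For comparison, the paper avoids the one-sided construction altogether: for $u\in\Cl$ it extends $u$ by $f\circ P$ to a tubular neighbourhood, runs the \emph{interior} recovery construction there (so the mismatch $\{\trace u\neq f\}$ becomes an ordinary interior jump), and then pulls back by the diffeomorphisms $\Phi_\e$ of Lemma \ref{geometry}, which map $\pa\Omega$ onto the outer level set where $\hat u_\e=f\circ P$, $\hat v_\e=1$; the bounds \eqref{diffeo1}--\eqref{diffeo3} together with \eqref{stima bound recovery 1}--\eqref{stima bound recovery 2} and the extra hypotheses \eqref{1.a}--\eqref{1.c} (note \eqref{1.b} is used precisely to compare $F(x,\cdot)$ with $F(\Phi_\e(x),\cdot)$) show the composition does not change the limit energy, yielding Proposition \ref{propo final recovery}. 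Your one-sided idea is plausible (the optimal profile's cost is insensitive to translating the damage band to one side of the jump, so heuristically the same $a$, $b$ and $F_\infty$ terms appear, and you would not even need \eqref{1.b}), but to count as a proof you would have to construct $(u_\e,v_\e)$ explicitly near the curved $C^1$ boundary, verify the constraint $|\nabla v_\e|\le 1/\e$ and the exact boundary values, redo the Step one--three computations of Proposition \ref{propo: approx of nice function} for the half-band, and supply the missing approximation result replacing Proposition \ref{approx to Cl}. As it stands, these are precisely the steps the paper proves and your proposal postulates.
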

This implies that, by combining the compactness Theorem \ref{main thm: comp} with Theorem \ref{thm: existence}, we can prove the following corollary.
\begin{corollary}
There exists at least a minimizer for the problem $\gamma_0$.
\end{corollary}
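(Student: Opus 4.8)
The plan is to realize a minimizer of $\gamma_0$ as an accumulation point of minimizers of the approximating problems $\gamma_{\e}$, exactly as announced just before the statement. First I would use Theorem~\ref{thm: existence} to select, for each $\e>0$, a minimizer $(u_{\e},v_{\e})\in H^1(\Omega;\R^n)\times V_{\e}$ of $\gamma_{\e}$; by construction $\F_{\e}(u_{\e},v_{\e})=\gamma_{\e}$ and $\|u_{\e}\|_{L^{\infty}}\le d$. Again by Theorem~\ref{thm: existence} one has $\gamma_{\e}\to\gamma_0$ as $\e\to 0$, and $\gamma_0$ is a finite real number: it is bounded below by the coercivity mechanism underlying Proposition~\ref{prop: bnd energy AT} (the assumption \eqref{bound on p} on $\sigma$ makes the bulk and jump contributions of $\Phi$, together with the boundary contribution $\mathcal{R}(\cdot,f)$, bounded from below), and it is bounded above, for instance by the value of the limit functional at $u\equiv 0$, which is finite since $f$ is smooth. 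Consequently $\F_{\e}(u_{\e},v_{\e})=\gamma_{\e}$ is uniformly bounded for $\e$ small, and since $\|u_{\e}\|_{L^{1}}\le d|\Omega|$ we obtain
\[
\sup_{\e>0}\{\|u_{\e}\|_{L^{1}}+\F_{\e}(u_{\e},v_{\e})\}<+\infty,
\]
so that the hypotheses of the compactness Theorem~\ref{main thm: comp} are met.

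Then I would apply Theorem~\ref{main thm: comp}: there exist a subsequence $\e_k\to 0$ and $u_0\in SBD^2(\Omega)$ with $u_{\e_k}\to u_0$ and $v_{\e_k}\to 1$ in $L^1$. Hence $(u_0,1)$ is an accumulation point of the family $\{(u_{\e},v_{\e})\}_{\e>0}$. By the last assertion of Theorem~\ref{thm: existence}, every such accumulation point is of the form $(u_0,1)$ with $u_0$ a minimum point of $\gamma_0$; therefore $\gamma_0$ is attained at $u_0$, which is precisely the content of the corollary. (Passing, if needed, to a further subsequence along which $u_{\e_k}\to u_0$ almost everywhere shows $\|u_0\|_{L^{\infty}}\le d$, so $u_0$ lies in the admissible class of $\gamma_0$; in any case this membership is already encoded in the statement of Theorem~\ref{thm: existence}.)

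Since this argument merely stitches together two statements proved earlier in the paper, there is no substantial obstacle. The only point deserving a line of care is the uniform bound on the energies $\gamma_{\e}$, i.e. verifying that the limit $\gamma_0$ is finite and that the admissible class in each $\gamma_{\e}$ is non-empty — the latter holds because the smooth datum $f$ admits an $H^1$ extension compatible with the constraints $\e\alpha<v\le 1$, $|\nabla v|\le 1/\e$ and $\|u\|_{L^{\infty}}\le d$. Everything else — extraction of subsequences and passage to the limit — is delivered verbatim by Theorems~\ref{main thm: comp} and~\ref{thm: existence}.
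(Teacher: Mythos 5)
Your argument is correct and is essentially the paper's own: the paper proves the corollary precisely by combining Theorem~\ref{thm: existence} (convergence $\gamma_{\e}\to\gamma_0$ and the fact that accumulation points of minimizers are minimizers of $\gamma_0$) with the compactness Theorem~\ref{main thm: comp} applied to the minimizers $(\bar u_{\e},\bar v_{\e})$, whose energies are uniformly bounded. Your extra remarks on finiteness of $\gamma_0$ and non-emptiness of the admissible classes are harmless refinements of the same scheme.
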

The proof of Theorem \ref{thm: existence} follows by showing that the problem $\gamma_{\e}$ ($\Gamma$)-converges to the problem $\gamma_0$. While it is easy to show that $\liminf \gamma_{\e}\geq \gamma_0$ in order to prove the $\limsup$ inequality we have to exhibit a recovery sequence with fixed boundary datum. Note that this approach to handle the boundary datum was proposed in \cite{AmbrosioElast} in the anti-plane case, though without a formal proof.

The arguments we used to address existence results should be considered as a title of example in order to introduce and formalize an approach based on the extension of the domain $\Omega$. For this reason, its generality is restricted. In particular, for the sake of simplicity we restrict ourselves to smooth boundary data considered on domain with smooth boundary. We are however confident that, with a refined analysis of the surgeries, one can carry out a more general statement involving $H^{1/2}$ boundary data defined on pieces of the boundary $\pa \Omega$ of a Lipschitz domain.
\subsection{Recovery sequence with prescribed boundary condition}\label{sbsct: rcvry with fixed bndry}
We now proceed to show how to recover the energy of a function $u\in \Cl$ by making use of function $u_j\in H^1(\Omega;\R^n)$ with smooth boundary data $f\in C^1(\pa \Omega;\R^n)$. At the very end, by making use of Theorem \ref{thm IUR} and  \ref{thm CT} we show that we can recover the energy $\F(u,1)+\mathcal{R}(u,f)$ of any $u\in SBD^2(\Omega)$.
\begin{figure}\label{fig: surgery}
\includegraphics[scale=0.7]{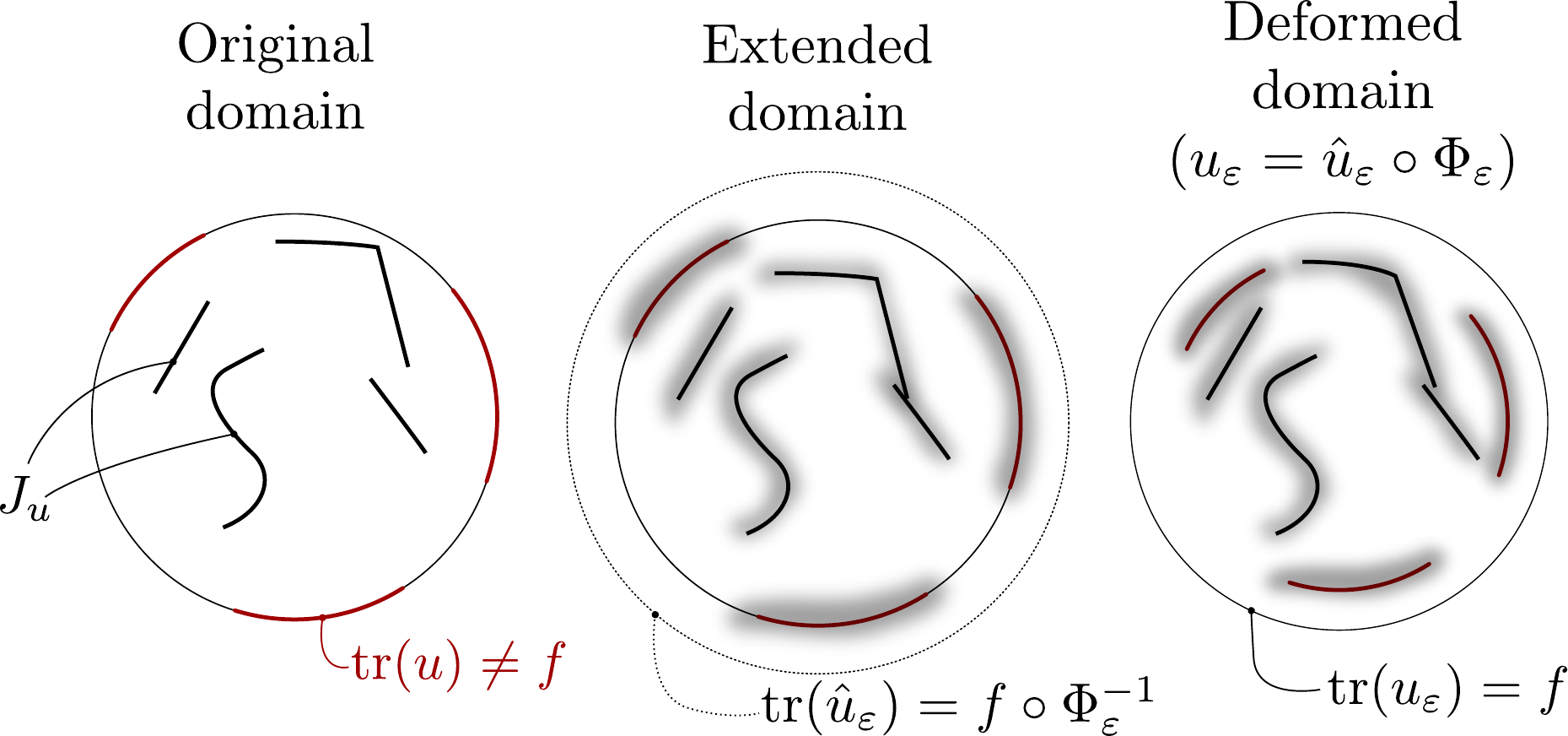}\caption{In red is depicted the region where $\trace(u)\neq f$. After the normal extension we can see that the region $\{\trace(u)\neq f\}$ has become just part of $J_{\hat{u}}$. We then consider a recovery sequence $(\hat{u}_{\e},\hat{v}_{\e})$ defined as in \eqref{eqn: rcvry v regular} and \eqref{eqn: rcvry u regular}. The grey part represents the region where the damage variable $\hat{v}_{\e}<<1$. Finally, by composing $u_{\e},v_{\e}$ with the diffeomorphism $\Phi_{\e}$ provided by Proposition \ref{geometry}, we go back to our domain $\Omega$ by preserving the boundary condition. This operation does not affect in a significant way the energy and we asymptotically recover the sharp energy, which also accounts for $\mathcal{R}(u,f)$ (that comes exactly  from those regions where $\{\trace(u)\neq f\}$). }
\end{figure}
We briefly sketch the proof for regular functions before moving to the technical part. As depicted in Figure \ref{fig: surgery} it might happen that $\trace(u)\neq f$ on $\pa \Omega$. To handle also this situation, which represents the main challenge of our proof, we first extend normally our $u\in \Cl$ into a $\hat{u}$ defined on a slightly larger $\hat{\Omega}\supset \Omega$ in a way that does not destroy the regularity of $u$. In this way, any region on $\pa \Omega$ where $\trace(u)\neq f$ becomes the jump region of $\hat{u}$ and it is well contained in the extended domain. Thus we can proceed to define the recovery sequence as in  \eqref{eqn: rcvry v regular} and \eqref{eqn: rcvry u regular}. Such a recovery sequence coincides with $u$ far enough from the jump set and this allows us to deduce a strong control on the energy in the strip $\hat{\Omega}\setminus \Omega$. This normal extension further allows us to deduce that along the level set $E_t= \{d(x,\pa\Omega)=t\}$ (for suitable $t$) we have that $u_{\e}\Big{|}_{E_t}=f$. Then, by applying a smooth diffeomorphism, that we are able to control in terms of $\e$, we shrink back our extended domain onto $\Omega$ so that $E_t\mapsto \pa \Omega$ and this guarantees that the whole boundary condition is satisfied.\\
\text{}\\
We start with the following technical Lemma that will provide us the required family of diffeomorphisms. Let us recall that we are denoting by $P:(\pa \Omega)_{\delta}\rightarrow \pa\Omega$ the orthogonal projection onto $\pa \Omega$ well defined on any tubular neighbourhood $(\pa \Omega)_{\delta}$ of $\pa \Omega$ small enough. Moreover we are always considering the outer unit normal $\nu_{\Omega}:\pa \Omega\rightarrow S^{n-1}$ and we recall that, with the notation $\dist(x,\pa\Omega)$, we are always meaning the \textit{signed distance}
    \[
    \dist(x,\pa \Omega):= (x-P(x))\cdot \nu_{\Omega}(P(x))
    \]
well defined on small tubular neighbourhoods around $\pa \Omega$.
\begin{lemma}\label{geometry}
Let $\Omega$ be an open bounded set with $C^1$ boundary and consider $(\pa \Omega)_{\delta}$  any fixed tubular neighborhood of $\pa \Omega$  where the projection operator $x\mapsto P(x)\in \pa \Omega$ is well defined. Let also $(\pa \Omega)_{\e L}$ be another tubular neighborhood where $L>0$ is any real constant and set $\Omega_{\e}=\Omega\cup (\pa \Omega)_{\e L}$. Then there exists a family of diffeomorphism $\{\Phi_{\e} :\Omega_{\e} \rightarrow \Omega\}_{\e>0}$ such that 
	\begin{align}
	\lim_{\e\rightarrow 0} \sup_{x\in \Omega} \{|J\Phi_{\e}(x)|\}&=1,\label{diffeo1}\\
	 \lim_{\e\rightarrow 0}\sup_{x\in \Omega_{\e}}\{|J\Phi_{\e}^{-1}(x)|\}&=1,\label{diffeo2}\\
	 |\nabla \Phi_{\e}^{-1}(x)-\Id|+ |\nabla \Phi_{\e}(x)-\Id| &\leq C\e \label{diffeo3}
	\end{align}
where $C$ depends on $\Omega$, $L$ and $\delta$ only. Moreover 
\begin{align*}
P(\Phi_{\e}(x))=P(\Phi_{\e}^{-1}(x))&=P(x) \ \ \ \ \text{on $(\pa \Omega)_{\e L }\cup (\pa \Omega)_{\delta}$},\\ 
\Phi_{\e}(x)&=x \ \ \ \ \text{on $\Omega\setminus(\pa \Omega)_{\delta}$},
\end{align*}
and $\Phi_{\e}^{-1}(\pa \Omega_{\e})=\pa \Omega ,\  \ \Phi_{\e}(\pa \Omega)=\pa \Omega_{\e},$
\end{lemma}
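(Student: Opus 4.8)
The plan is to build $\Phi_\e$ as the identity away from a fixed collar of $\pa\Omega$ and, inside the collar, as a reparametrization of the signed-distance variable that pushes the thin outer layer $\{0\le \dist(\cdot,\pa\Omega)<\e L\}$ onto $\pa\Omega$ while leaving the foot-point $P(x)$ unchanged. Concretely, I would fix a cut-off $\phi\in C^\infty(\R)$ with $0\le\phi\le1$, $\phi\equiv0$ on $(-\infty,-\delta/2]$, $\phi\equiv1$ on $[0,+\infty)$ and $\|\phi'\|_\infty\le C_\delta$, and set $g_\e(s):=s-\e L\,\phi(s)$. For $\e$ small, $g_\e$ is an increasing $C^\infty$ diffeomorphism of $(-\delta,\e L]$ onto $(-\delta,0]$ satisfying $g_\e(\e L)=0$, $g_\e(s)=s$ for $s\le-\delta/2$, $g_\e(s)\in(-\delta,0]$, and $|g_\e'-1|=\e L|\phi'|\le C\e$ (so also $|(g_\e^{-1})'-1|\le C\e$). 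Then one defines
\[
\Phi_\e(x):=\left\{\begin{array}{ll} x, & x\in\Omega\setminus(\pa\Omega)_{\delta/2},\\[2mm] P(x)+g_\e\big(\dist(x,\pa\Omega)\big)\,\nu_\Omega(P(x)), & x\in (\pa\Omega)_{\delta}\cap\Omega_\e.\end{array}\right.
\]
On the overlap of the two regions one has $\dist(x,\pa\Omega)\le-\delta/2$, hence $\phi\big(\dist(x,\pa\Omega)\big)=0$, so $g_\e\big(\dist(x,\pa\Omega)\big)=\dist(x,\pa\Omega)$ and $P(x)+\dist(x,\pa\Omega)\,\nu_\Omega(P(x))=x$: the two definitions agree. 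Thus $\Phi_\e$ is a well-defined $C^1$ map on $\Omega_\e$ (here the $C^1$ regularity of $\pa\Omega$ enters through that of $\dist(\cdot,\pa\Omega)$ and of $P$ on $(\pa\Omega)_\delta$), and, since $g_\e$ is an increasing diffeomorphism acting only in the radial direction, it is a diffeomorphism of $\Omega_\e$ onto $\Omega$ whose inverse is $\Phi_\e^{-1}(y)=P(y)+g_\e^{-1}\big(\dist(y,\pa\Omega)\big)\,\nu_\Omega(P(y))$ in the collar of $\Omega$ and the identity elsewhere.

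Next I would verify the listed properties one by one. Because in the collar $\Phi_\e$ displaces $x$ only along $\nu_\Omega(P(x))$, by an amount $g_\e(\dist(x,\pa\Omega))$ which stays in $(-\delta,0]$, the image remains in $(\pa\Omega)_\delta$ with foot-point $P(x)$, so $P(\Phi_\e(x))=P(x)$ on $(\pa\Omega)_\delta$, hence also on $(\pa\Omega)_{\e L}\subset(\pa\Omega)_\delta$; the same holds for $\Phi_\e^{-1}$, being radial too. The identity $\Phi_\e=\Id$ on $\Omega\setminus(\pa\Omega)_\delta=\{\dist(\cdot,\pa\Omega)\le-\delta\}$ is immediate from the first branch. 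From $g_\e(\e L)=0$ one reads off $\Phi_\e(\pa\Omega_\e)=\pa\Omega$, equivalently $\Phi_\e^{-1}(\pa\Omega)=\pa\Omega_\e$, which is the asserted boundary identification. For the differential estimate, in the collar $\nabla\Phi_\e(x)=\Id-\e L\,\nabla\!\big(\phi(\dist(x,\pa\Omega))\,\nu_\Omega(P(x))\big)$; bounding the bracket through the first derivatives of $\dist(\cdot,\pa\Omega)$ and of $\nu_\Omega\circ P$, which on the fixed collar $(\pa\Omega)_\delta$ are controlled in terms of $\Omega$ and $\delta$ only, yields $|\nabla\Phi_\e-\Id|\le C\e$ with $C=C(\Omega,L,\delta)$; the bound for $\Phi_\e^{-1}$ then follows from $\nabla\Phi_\e^{-1}=(\nabla\Phi_\e)^{-1}\!\circ\Phi_\e^{-1}$ together with $|(g_\e^{-1})'-1|\le C\e$. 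Finally $\sup|J\Phi_\e|\to1$ and $\sup|J\Phi_\e^{-1}|\to1$ are an immediate consequence of $\det(\Id+A)=1+O(|A|)$ for $|A|$ small.

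The construction itself is elementary, and I expect the only genuine (if modest) obstacle to be the quantitative estimate $|\nabla\Phi_\e-\Id|+|\nabla\Phi_\e^{-1}-\Id|\le C\e$ near $\pa\Omega$: one must keep track of how the derivatives of the projection $P$ and of the normal field $\nu_\Omega\circ P$ enter the differential of the radial reparametrization, uniformly over the collar, and check that the only surviving $\e$-dependence is the factor $\e L$ multiplying $\phi$, so that the estimate closes with a constant depending on $\Omega$, $L$ and $\delta$ alone. The remaining verifications — compatibility of the two branches, that $g_\e$ is a diffeomorphism for small $\e$, the boundary identifications, and the passage from the gradient bound to the Jacobian limits — are routine bookkeeping.
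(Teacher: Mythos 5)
Your construction is essentially the paper's: a deformation supported in a fixed collar of $\pa\Omega$ that moves points only along $\nu_{\Omega}(P(x))$ by an amount of size $\e L$ interpolated in the signed distance (the paper uses the affine factor $\frac{\delta+\dist(x,\pa\Omega)}{\delta}\,\e L$ where you use the cutoff reparametrization $g_\e(s)=s-\e L\,\phi(s)$), with the same chain-rule bound $|\nabla\Phi_\e-\Id|\le C\e$ via the derivatives of $P$, $\nu_\Omega$ and $\dist$ on the fixed collar, and the Jacobian limits from $\det(\Id+A)=1+O(|A|)$. The only discrepancy is orientation: you build $\Phi_\e:\Omega_\e\to\Omega$, so your identification $\Phi_\e(\pa\Omega_\e)=\pa\Omega$ swaps the roles of $\Phi_\e$ and $\Phi_\e^{-1}$ relative to the displayed claims $\Phi_\e(\pa\Omega)=\pa\Omega_\e$, $\Phi_\e^{-1}(\pa\Omega_\e)=\pa\Omega$ (the statement is internally inconsistent on this point, and the paper's own proof constructs the expanding map $\Phi_\e:\Omega\to\Omega_\e$); this is a harmless relabeling, not a gap.
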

\begin{proof}
\begin{figure}\label{fig: diffeo}
\includegraphics[scale=0.8]{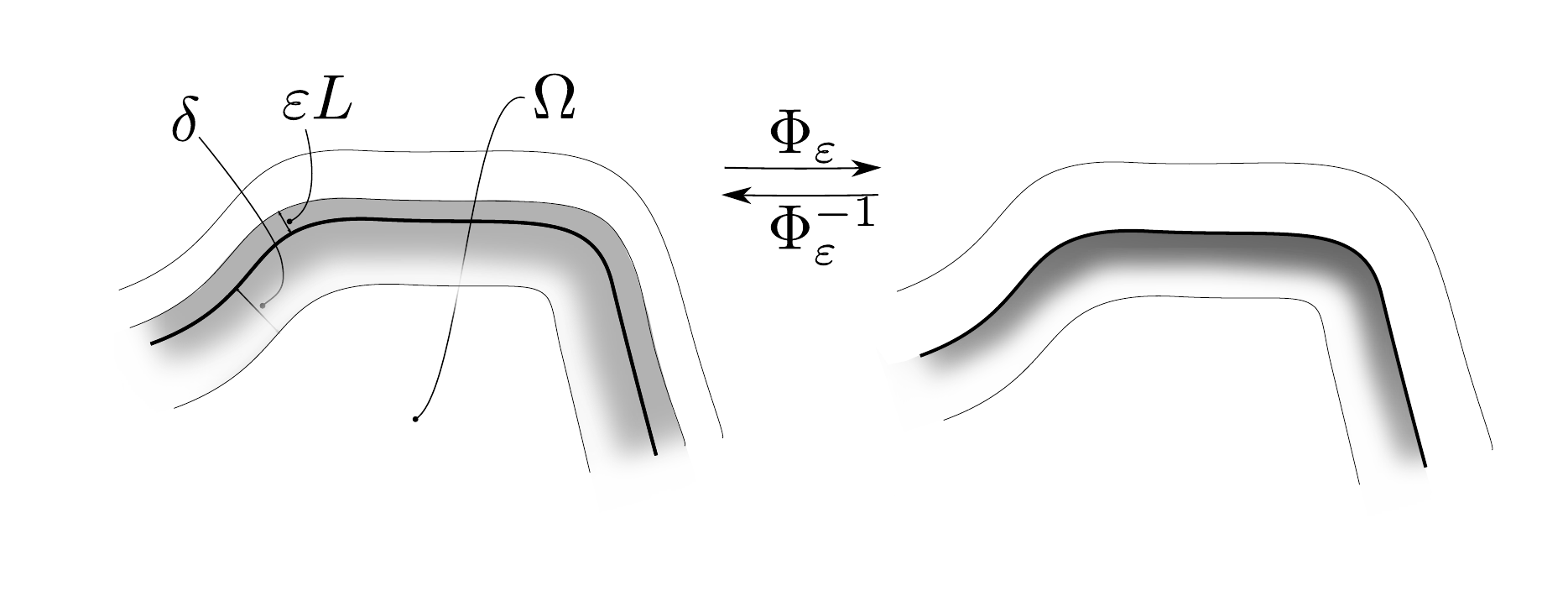}\caption{We shrink the region $(\pa \Omega)_{\e L} \cup [(\pa \Omega)_{\delta}\cap\Omega]$ onto $(\pa \Omega)_{\delta} \cap \Omega$ through $\Phi_{\e}$ by gently pushing the set along $\nu_{\Omega}$ with a strength that decays in $\dist(x,\pa \Omega)$ fast enough so that $\Phi_{\e}(x)=x$ on $\Omega\setminus (\pa \Omega)_{\delta}$.} 
\end{figure}
Consider the diffeomorphism, depicted in Figure \ref{fig: diffeo}:
	\begin{equation}
	\Phi_{\e}(x);=\left\{ 
	\begin{array}{ll}
	x \ &\ \text{if $x\in \Omega \setminus (\pa\Omega)_{\delta}$}\\
	x+\nu_{\Omega}(P(x))\frac{(\delta+\dist(x,\pa \Omega)}{\delta} \e L&\ \text{if $x\in \Omega \cap (\pa\Omega)_{\delta}$}
	\end{array}\right.
	\end{equation}
with inverse
	\begin{equation}
	\Phi_{\e}^{-1}(x);=\left\{ 
	\begin{array}{ll}
	x \ &\ \text{if $x\in \Omega_{\e} \setminus (\pa\Omega)_{\delta}$}\\
	x-\nu_{\Omega}(P(x))\frac{(\delta+\dist(x,\pa \Omega))}{\delta+\e L}\e L  &\ \text{if $x\in \Omega_{\e} \cap \Omega_{\delta}$}.
	\end{array}\right.
	\end{equation}
It is straightforward that $\Phi_{\e}(\pa \Omega)=\pa \Omega_{\e}$, $\Phi_{\e}^{-1}(\pa \Omega_{\e})=\pa \Omega $ and $\Phi_{\e}(x)=x$ on $\Omega\setminus(\pa \Omega)_{\delta}$. Moreover
	\[
\nabla \Phi_{\e}(x)= \Id + \nabla \nu_{\Omega}(P(x))\frac{(\delta + \dist(x,\pa\Omega))}{\delta}\e L+ \nu_{\Omega}(P(x))\otimes \nu_{\Omega}(P(x)) \frac{\e L}{\delta} 	
	\]
in particular the desired convergences \eqref{diffeo1}, \eqref{diffeo2} follow together with \eqref{diffeo3}.
\end{proof}
\begin{proposition} 
Let $\Omega$ be an open bounded set with $C^1$ boundary and let $f\in C^1(\pa \Omega;\R^n)$. For every $u\in \Cl$ there exists a sequence $(u_{\e},v_{\e})\in H^1(\Omega)\times V_{\e}$ such that $(u_{\e},v_{\e})\to (u,1)$ in $L^2$, $u_{\e}=f,  \ v_{\e}=1$ on $\pa \Omega$, 
and 
	\[
	\F_{\e}(u_{\e},v_{\e})\rightarrow \F(u,1)+\mathcal{R}(u,f)\quad \mbox{as}\quad\varepsilon\to0.
	\]
Moreover $\|u_{\e}\|_{\infty}\leq \|u\|_{\infty}$.
\end{proposition}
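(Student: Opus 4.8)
The plan is to follow the roadmap sketched in Figure \ref{fig: surgery} and combine the interior recovery sequence of Proposition \ref{propo: approx of nice function} with the geometric surgery of Lemma \ref{geometry}. First I would \emph{extend $u$ normally}: since $u\in \Cl$ is $W^{m,\infty}$ away from its (piecewise-affine) jump set $K$, near $\pa\Omega$ it has a normal trace that we can extend to a function $\hat u\in \Cl(\hat\Omega;\R^n)$ on a slightly larger domain $\hat\Omega=\Omega\cup(\pa\Omega)_{\e L}$, simply by setting $\hat u(x)=u(P(x))$ on the outer strip $(\pa\Omega)_{\e L}\setminus\Omega$ (using the normal projection $P$). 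The key point is that the portion of $\pa\Omega$ where $\trace(u)\neq f$ becomes, after gluing $f$ on the outside, part of the jump set $J_{\hat u}$ of $\hat u$, with jump $[u-f]\odot\nu_\Omega$; because $f\in C^1(\pa\Omega)$ and $u$ is regular up to the boundary, $\hat u$ stays in the class $\Cl$ (modulo the usual harmless surgeries of Remarks \ref{trick} and \ref{Reg}).

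Next I would build the recovery pair $(\hat u_\e,\hat v_\e)$ for $\hat u$ on $\hat\Omega$ exactly as in \eqref{eqn: rcvry v regular}--\eqref{eqn: rcvry u regular}, with the optimal profile $\bar\vartheta$ of Step four of Proposition \ref{propo: approx of nice function}. By that proposition, $\F_\e(\hat u_\e,\hat v_\e;\hat\Omega)\to \F(\hat u,1;\hat\Omega)$, and splitting $\hat\Omega=\Omega\cup(\hat\Omega\setminus\Omega)$ the contribution of the thin strip $\hat\Omega\setminus\Omega$ is controlled using \eqref{stima bound recovery 1}--\eqref{stima bound recovery 2}: the bulk part is $O(|\hat\Omega\setminus\Omega|)=O(\e)$ except on the tubular neighborhood $A_{\bar\vartheta\e}$ of $J_{\hat u}\cap(\hat\Omega\setminus\Omega)$, where precisely the jump contribution $\mathcal R(u,f)$ is produced. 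A crucial technical observation is that, far enough from $J_{\hat u}$, one has $\hat u_\e= \hat u= u$ and in particular there is a level set $E_t=\{\dist(x,\pa\Omega)=t\}$ (for a suitable fixed $t=t(\e)$ comparable to $\e$, chosen in the region where $\hat v_\e\equiv 1$ and $\hat u_\e\equiv\hat u$, which equals $f$ on $E_t$ by the normal extension) on which $\hat u_\e\big|_{E_t}=f$ and $\hat v_\e\big|_{E_t}=1$. Composing with the diffeomorphism $\Phi_\e$ of Lemma \ref{geometry} (which maps $\pa\Omega$ to $\pa\Omega_\e=\pa\hat\Omega$, sends $E_t$ to $\pa\Omega$, and is the identity away from $(\pa\Omega)_\delta$), I set $u_\e:=\hat u_{\e}\circ\Phi_\e$ and $v_\e:=\hat v_\e\circ\Phi_\e$ on $\Omega$; then $u_\e=f$, $v_\e=1$ on $\pa\Omega$, and $(u_\e,v_\e)\in H^1(\Omega)\times V_\e$ (the gradient bound $|\nabla v_\e|\le 1/\e$ survives because $|\nabla\Phi_\e-\Id|\le C\e$ gives only a factor $1+O(\e)$, absorbed exactly as in Remark \ref{rmk costraint ve}).

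Finally I would check that the change of variables does not perturb the limit energy. Using the change-of-variables formula together with \eqref{diffeo1}, \eqref{diffeo2}, \eqref{diffeo3}, each term of $\F_\e$ transforms with Jacobian factors tending to $1$ and with $\nabla u_\e= (\nabla\hat u_\e\circ\Phi_\e)\nabla\Phi_\e = (\nabla\hat u_\e\circ\Phi_\e)(\Id+O(\e))$; since $\int|\nabla\hat u_\e|$ and $\int v_\e|\nabla\hat u_\e|^2$ are uniformly bounded by \eqref{stima bound recovery 1}--\eqref{stima bound recovery 2}, and $|F(x,M,v)|\le\ell|M|$, all the $O(\e)$ corrections — in the bulk term $v\A e(u)\cdot e(u)$, in $\psi(v)/\e$ (whose support has measure $O(\e)$ times $\H^{n-1}(J_{\hat u})$, so the $O(\e)$ distortion of the Jacobian contributes $O(\e)$), and in the potential $\int F(x,e(u_\e),v_\e)$ — are infinitesimal. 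Hence $\F_\e(u_\e,v_\e;\Omega)\to \F(\hat u,1;\hat\Omega)=\F(u,1;\Omega)+\mathcal R(u,f)$, the last equality because $\F(\hat u,1)$ splits into the interior energy $\F(u,1;\Omega)$ plus the jump energy on $J_{\hat u}\cap(\hat\Omega\setminus\Omega)$, which by construction is $\mathcal R(u,f)$ (the three terms $b\H^{n-1}$, $a\sqrt{\A(\cdot)\cdot(\cdot)}$, $F_\infty$ evaluated on $[u-f]\odot\nu_\Omega$ over the relevant part of $\pa\Omega$). The $L^2$ convergence and the bound $\|u_\e\|_\infty\le\|u\|_\infty$ are inherited from Proposition \ref{propo: approx of nice function} and from $\Phi_\e\to\Id$. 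The main obstacle I anticipate is the bookkeeping at the interface between $\Omega$ and the extension strip: one must ensure that the optimal tubular neighborhood $A_{\bar\vartheta\e}$ of the ``new'' jump set sits inside $\hat\Omega$ and does not interact with $\pa\hat\Omega$, and that the diffeomorphism $\Phi_\e$ (which is non-trivial only in $(\pa\Omega)_\delta$, a fixed-size neighborhood) genuinely maps the chosen level set $E_{t(\e)}$ onto $\pa\Omega$ while keeping $(u_\e,v_\e)$ admissible; handling the surgeries of Remarks \ref{trick} and \ref{Reg} simultaneously with this boundary extension is where the argument is most delicate, though — as the authors note — none of it affects the asymptotics.
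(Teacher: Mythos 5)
Your proposal follows essentially the same route as the paper: extend across $\pa\Omega$ by the boundary datum (note the extension must be $f(P(x))$, not $u(P(x))$, as you in fact use when ``gluing $f$ on the outside'', so that the mismatch region becomes a jump of amplitude $[u-f]$), run the recovery construction \eqref{eqn: rcvry v regular}--\eqref{eqn: rcvry u regular} with the optimal $\bar\vartheta$ on the enlarged domain, and pull back by the diffeomorphism of Lemma \ref{geometry}, controlling the distortion through \eqref{diffeo1}--\eqref{diffeo3} and the bounds \eqref{stima bound recovery 1}--\eqref{stima bound recovery 2}. The only detail you gloss over is that replacing $F(\Phi_{\e}(x),\cdot,\cdot)$ by $F(x,\cdot,\cdot)$ after the change of variables requires the spatial Lipschitz assumption \eqref{1.b}, exactly as in the paper's proof.
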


\begin{proof}
By virtue of Remark \ref{trick} we can always assume that $\ov{J_u}\cap \Omega\subset \Omega$. In particular we can find a $\delta>0$  that depends only on $\Omega$ and $u$ and such that $(\pa \Omega)_{\delta}\cap \ov{J_u} =\emptyset$.  We first define the extension $\hat{u}$ of $u\in \Cl$ as
	\begin{equation}\label{extended u}
	\hat{u}:=\left\{
	\begin{array}{ll}
	u(x) \ & \ \text{for $x\in \Omega$}\\
	f(P(x)) \ & \ \text{for $x\in (\pa \Omega)_{\delta}\setminus \Omega$}.
	\end{array}
	\right.
	\end{equation}
where $P$ denotes the orthogonal projection onto $\pa \Omega$ which is well defined on $(\pa \Omega)_{\delta}$ for $\delta$ small enough.
Then, having in mind Remarks \ref{Reg} and \ref{rmk costraint ve}, for any $\e>0$ we define $(\hat{u}_{\e},\hat{v}_{\e})\in H^1(\Omega \cup(\pa \Omega)_{\delta};\R^n)\times V_{\e}$ as in \eqref{eqn: rcvry v regular} and \eqref{eqn: rcvry u regular} with the $\vartheta$ provided in step four of the proof of Proposition \ref{propo: approx of nice function} (clearly we mean $V_{\e}$ referred to the domain $\Omega \cup(\pa \Omega)_{\delta}$ which is here not explicitly denoted in order to enlighten the notation). Notice that
	\[
	[\hat{u}]\H^{n-1}\llcorner J_{\hat{u}} =[u]\H^{n-1}\llcorner J_u+[\trace(u)-f]\H^{n-1}\llcorner \pa \Omega.
	\]
According to the definition of $\hat{u}_{\e}$ in \eqref{eqn: rcvry u regular}, we can see that $\hat{u}_{\e}(x)=\hat{u}(x)$ for all $x$ such that $d(x,J_{\hat{u}})> L_0 \e$ for an $L_0$ depending on $u$ only. In particular we can choose a suitable $L>0$ so to guarantee that $\hat{u}_{\e}(x)=\hat{u}(x)=f(P(x))$, $\hat{v}_{\e}(x)=1$ for all $x\in  [(\pa \Omega)_{\delta}\setminus (\pa \Omega)_{L\e}] \setminus \Omega $. We now apply our Lemma \ref{geometry} to $\Omega$ with the tubular neighborhoods $(\pa \Omega)_{\delta}, (\pa \Omega)_{\e L}$ to produce a family of diffeomorphism $\{\Phi_{\e}:\Omega\rightarrow \Omega\cup(\pa\Omega)_{\e L}=\Omega_{\e}\}$. By virtue of the computations in Remark \ref{crmk: ontrols on the gradient} and in particular due to \eqref{stima bound recovery 1} and \eqref{stima bound recovery 2} we can deduce also
	\begin{align}
	\int_{\Omega_{\e}} |\nabla \hat{u}_{\e}(x)|\d x+\int_{\Omega_{\e}} \hat{v}_{\e}|\nabla \hat{u}_{\e}|^2\d x&\leq C,\label{final control of gradient}
	\end{align}	 
	for a constant $C>0$ that depends on $\Omega$ and $u$ only (and that in the sequel may vary from line to line), while it is clear that the same computation performed in the proof of Proposition \ref{propo: approx of nice function} leads to
	\begin{align}\label{energy conv}
	\lim_{\e\rightarrow 0} \F_{\e}(\hat{u}_{\e},\hat{v}_{\e};\Omega_{\e})=\F(u,1)+\mathcal{R}(u,f).
	\end{align}
By making use of this facts we proceed to define $(u_{\e},v_{\e})\in H^1(\Omega;\R^n)\times V_{\e}$ by simply shrinking our domain $\Omega_{\e}$ into $\Omega$ throughout $\Phi_{\e}$. More precisely
	\begin{align*}
	u_{\e}(x)&:= \hat{u}_{\e}(\Phi_{\e}(x)), \ \ \ \   v_{\e}:=\hat{v}_{\e}(\Phi_{\e}(x)).
	\end{align*}
Notice that for $x\in \pa \Omega$ we have $\Phi_{\e}(x)\in \pa \Omega_{\e} \setminus \Omega\subset (\pa \Omega)_{\delta}\setminus \Omega$ and that $ P(\Phi_{\e}(x))=P(x)=x$ for all $x\in \pa \Omega $. 
Hence
	\[
	u_{\e}(x)=\hat{u}_{\e}(\Phi_{\e}(x))=f(P((\Phi_{\e}(x)))=f(x), \ \ \ \ v_{\e}(x)=\hat{v}_{\e}(\Phi_{\e}(x))=1 \ \ \ \ \text{for all $x\in \pa \Omega$}.
	\]
We underline that, as in Remark \ref{rmk costraint ve}, we are once again neglecting a possible factor (asymptotically equal to $1$) in front of $v_{\e}$ that might be needed in order to comply with the constraint $|\nabla v_{\e}(x)|\leq 1/\e$. Up to this carefulness we can infer $( u_{\e}, v_{\e})\in H^1(\Omega;\R^n)\times V_{\e}$. 
The $L^1$ convergence is immediately derived from the easy relations
    \begin{eqnarray*}
    \int_{\Omega}|u_\e - u|\d x&\leq& \int_{\Omega}|\hat{u}_\e(\Phi_\e(x)) - \hat{u}(\Phi_\e(x))|\d x + \int_{\Omega}|\hat{u}(\Phi_\e(x)) - u(x)|\d x,\\
    \int_{\Omega}|\hat{u}(\Phi_\e(x)) - u(x)|\d x&\leq& C\e (\|u\|_{\infty}+\|\nabla u\|_{\infty}),
    \end{eqnarray*} 
also holding for the function $v_\e$.
     It remains to show that the energy of the pairs $( u_{\e},v_{\e})$ is converging to $\F(u,1)+\mathcal{R}(u,f)$. From
	\[
	\nabla u_{\e}(x)-\nabla \hat{u}_{\e}(\Phi_{\e}(x))=(\nabla \Phi_{\e}(x) -\Id)\nabla \hat{u}_{\e}(\Phi_{\e}(x)),
	\]
and thanks to \eqref{diffeo3} we get
	\begin{align*}
	|\nabla  u_{\e}(x)-\nabla \hat{u}_{\e}( \Phi_{\e}(x))|\leq C\e|\nabla \hat{u}_{\e}(\Phi_{\e}(x))|,
	\end{align*}
for a constant  $C$ depending on $\Omega$ and $u$ only. In particular,
	\begin{align*}
	&\left| \int_{\Omega}v_{\e}(x)\mathbb{A}[ e(u_{\e})(x)-e(\hat{u}_{\e})(\Phi_{\e}(x))]\cdot e(\hat{u}_{\e})(\Phi_{\e}(x))\d x \right|\\
		&+\left| \int_{\Omega}v_{\e}(x)\mathbb{A}[ e(u_{\e})(x)-e(\hat{u}_{\e})(\Phi_{\e}(x)\cdot[e(u_{\e})(x)-e(\hat{u}_{\e})(\Phi_{\e}(x))]\d x \right|\\
	& \leq C\e \int_{\Omega}\hat{v}_{\e}(\Phi_{\e}(x))|\nabla \hat{u}_{\e} (\Phi_{\e}(x))|^2\d x= C\e \int_{\Omega_{\e}}\hat{v}_{\e}(x)|\nabla \hat{u}_{\e} (x)|^2 |J \Phi_{\e}^{-1}(x)|\d x,
	\end{align*}
which vanishes due to \eqref{diffeo2} and \eqref{final control of gradient}. Along the same lines and by exploiting Remark \ref{rmk: Lipschitz function} combined with hypothesis \eqref{1.a} and item 3) in \ref{hyp on F} on $F$ we get
\begin{align*}
\int_{\Omega}  |F(x,e(u_{\e})(x),v_{\e}(x))& - F(x,e(\hat{u}_{\e})(\Phi_{\e}(x)),v_{\e}(x))|\d x\\
&\leq \ell \int_{\Omega}|e(u_{\e})(x)-e(\hat{u}_{\e}(\Phi_{\e}(x))|\d x\\
& \leq C\ell \e  \int_{\Omega}|\nabla \hat{u}_{\e}(\Phi_{\e}(x))|\d x\\
&\leq C\e  \int_{\Omega_{\e}}|\nabla \hat{u}_{\e}(x)||J\Phi_{\e}^{-1}(x)|\d x\rightarrow 0,
 \end{align*}
once again due to \eqref{diffeo2} and \eqref{final control of gradient}. On the other hand, by exploiting \eqref{1.b} we can infer that
	\begin{align*}
\int_{\Omega}  |F(x,e(\hat{u}_{\e})(\Phi_{\e}(x)),\hat{v}_{\e}(x))& - F(\Phi_{\e}(x),e(\hat{u}_{\e})(\Phi_{\e}(x)),\hat{v}_{\e}(x))|\d x\\
&\leq \rho \int_{\Omega}|\Phi_{\e}(x)-x||e(\hat{u}_{\e}(\Phi_{\e}(x))|\d x\\
&\leq C\e  \int_{\Omega_{\e}}|\nabla \hat{u}_{\e}(x)||J\Phi_{\e}^{-1}(x)|\d x\rightarrow 0.
 \end{align*}
 In particular, all this considered we can conclude that
 	\begin{align*}
 	\lim_{\e\rightarrow 0} \F_{\e}( u_{\e}, v_{\e};\Omega)=\lim_{\e\rightarrow 0}& \int_{\Omega}v_{\e}(x) \mathbb{A} e(\hat{u}_{\e})(\Phi_{\e}(x))\cdot e(\hat{u}_{\e})(\Phi_{\e}(x)) \d x\\
 	&+\int_{\Omega}F(\Phi_{\e}(x),e(\hat{u}_{\e})(\Phi_{\e}(x)),v_{\e}(x))\d x+ \int_{\Omega}\frac{\psi( v_{\e}(x))}{\e} \d x\\
 	=\lim_{\e\rightarrow 0}& \int_{\Omega_{\e}}|J\Phi_{\e}^{-1}| \hat{v}_{\e}\mathbb{A} e(\hat{u}_{\e})\cdot e(\hat{u}_{\e}) \d x\\
 	&+\int_{\Omega_{\e}}|J\Phi_{\e}^{-1}|F(x,e(\hat{u}_{\e}),\hat{v}_{\e})\d x+ \int_{\Omega_{\e}}|J\Phi_{\e}^{-1}|\frac{\psi(\hat{v}_{\e}(x))}{\e} \d x\\
 	=\lim_{\e\rightarrow 0}& \F_{\e}(\hat{u}_{\e},\hat{v}_{\e};\Omega_{\e})=\F(u,f)+\mathcal{R}(u,f),
 	\end{align*}
 where we exploited \eqref{diffeo2} and \eqref{energy conv}. Notice that the condition $\|u_{\e}\|_{\infty}\leq \|u\|_{\infty}$ is preserved by construction and thus $(u_{\e}, v_{\e})$ provide the desired sequences.
\end{proof}
\begin{proposition} \label{approx to Cl}
Let $\Omega$ be an open bounded set with $C^1$ boundary and fix a smooth boundary data $f\in C^1(\pa \Omega;\R^n)$. For any function $u\in SBD^2(\Omega)\cap L^{\infty}(\Omega)$ there exists a sequence $u_k\in \Cl$ such that $u_k\rightarrow u$ in $L^2$ and
	\[
	\limsup _{k\rightarrow +\infty} \F(u_k,1)+\mathcal{R}(u_k,f) \leq \F(u,1)+\mathcal{R}(u,f)
	\]
Moreover $\|u_k\|_{\infty}\leq \|u\|_{\infty}$.
\end{proposition}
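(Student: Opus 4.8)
My plan is to lift the problem onto a slightly larger domain, on which the boundary correction $\mathcal R(\cdot,f)$ becomes an ordinary interior surface term, apply the density Proposition~\ref{propo: density argument} there, and restrict. Since by Remark~\ref{trick} we may assume $\ov{J_u}\cap\Omega\subset\Omega$, I would fix $\delta>0$ so small that $(\pa\Omega)_\delta\cap\ov{J_u}=\emptyset$ and the outer parallel surface $\{\dist(\cdot,\pa\Omega)=\delta\}$ is still of class $C^1$; put $\hat\Omega_\delta:=\Omega\cup(\pa\Omega)_\delta$ and define the extension
\[
\hat u^\delta:=\begin{cases} u &\text{on }\Omega,\\ f\circ P&\text{on }(\pa\Omega)_\delta\setminus\ov\Omega,\end{cases}
\]
which lies in $SBD^2(\hat\Omega_\delta)\cap L^\infty$ (as $f\in C^1$ and $P$ is Lipschitz on $(\pa\Omega)_\delta$), with $J_{\hat u^\delta}=J_u\cup\{z\in\pa\Omega:\trace(u)(z)\neq f(z)\}$ and $[\hat u^\delta]=\trace(u)-f$ on $\pa\Omega$. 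A direct splitting of the integrals then yields the identity
\begin{equation}\label{APPCL-split}
\F(\hat u^\delta,1;\hat\Omega_\delta)=\F(u,1;\Omega)+\mathcal R(u,f)+r(\delta),
\end{equation}
where $r(\delta):=\int_{(\pa\Omega)_\delta\setminus\ov\Omega}\big[\A e(f\circ P)\cdot e(f\circ P)+F(x,e(f\circ P),1)\big]\d x$ satisfies, by item 3) of Assumption~\ref{hyp on F}, $|r(\delta)|\le C(f,\Omega)\,|(\pa\Omega)_\delta\setminus\ov\Omega|\to0$ as $\delta\to0$. The same identity applied to any $w\in\Cl$ in place of $u$ reads $\F(\hat w^\delta,1;\hat\Omega_\delta)=\F(w,1;\Omega)+\mathcal R(w,f)+r(\delta)$, $\hat w^\delta$ being the analogous sharp extension.

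\textbf{Density on $\hat\Omega_\delta$ and restriction.} For each fixed $\delta$ I would apply Proposition~\ref{propo: density argument} to $\hat u^\delta$ on $\hat\Omega_\delta$, obtaining $\hat u^\delta_k\in\mathrm{Cl}(\hat\Omega_\delta;\R^n)$ with $\hat u^\delta_k\to\hat u^\delta$ in $L^2$, $\|\hat u^\delta_k\|_\infty\le\|\hat u^\delta\|_\infty$ and $\F(\hat u^\delta_k,1;\hat\Omega_\delta)\to\F(\hat u^\delta,1;\hat\Omega_\delta)$; from its proof (Theorems~\ref{thm IUR}--\ref{thm CT}, plus Korn's inequality on the strip, where $\hat u^\delta$ is $W^{1,\infty}$ with no jumps) one also gets $e(\hat u^\delta_k)\to e(\hat u^\delta)$ and $\nabla\hat u^\delta_k\to\nabla(f\circ P)$ in $L^2$ on $(\pa\Omega)_\delta\setminus\ov\Omega$, together with $\H^{n-1}(J_{\hat u^\delta_k}\cap A)\to0$ for $A\cc(\pa\Omega)_\delta\setminus\ov\Omega$ (Theorem~\ref{thm CT}, property 5)). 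Perturbing infinitesimally the finitely many simplexes carrying $\ov{J_{\hat u^\delta_k}}$, I may also assume $\H^{n-1}(\ov{J_{\hat u^\delta_k}}\cap\pa\Omega)=0$, so that $\hat u^\delta_k$ is continuous across $\pa\Omega$. Then $u^\delta_k:=\hat u^\delta_k|_\Omega$ belongs to $\Cl$ (the simplicial/regularity structure is inherited by restriction to the open set $\Omega$), $u^\delta_k\to u$ in $L^2(\Omega)$, and, truncating at height $\|u\|_\infty$ (which, as in Propositions~\ref{propo: approx of nice function}--\ref{propo: density argument}, increases none of the terms of $\F$), $\|u^\delta_k\|_\infty\le\|u\|_\infty$.

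\textbf{Bookkeeping of the energy.} Since $\hat u^\delta_k$ is continuous across $\pa\Omega$, $\F(u^\delta_k,1;\Omega)=\F(\hat u^\delta_k,1;\hat\Omega_\delta)-\F(\hat u^\delta_k,1;(\pa\Omega)_\delta\setminus\ov\Omega)$, while the analogue of \eqref{APPCL-split} for the competitor $u^\delta_k$ gives $\F(u^\delta_k,1;\Omega)+\mathcal R(u^\delta_k,f)=\F(\bar u^\delta_k,1;\hat\Omega_\delta)-r(\delta)$, where $\bar u^\delta_k$ is the sharp extension of $u^\delta_k$. Comparing these two (sharp versus approximate) extensions on the strip, the whole matter reduces to the key estimate
\begin{equation}\label{APPCL-KEY}
\mathcal R(u^\delta_k,f)\le\F\big(\hat u^\delta_k,1;(\pa\Omega)_\delta\setminus\ov\Omega\big)-r(\delta)+o_k(1)+O(\sqrt\delta).
\end{equation}
To prove \eqref{APPCL-KEY} I would argue that $\mathcal R(u^\delta_k,f)$, being the relaxed cost of reconnecting the trace $\trace(u^\delta_k)=\trace(\hat u^\delta_k)|_{\pa\Omega}$ to $f$ across the strip, is dominated by the energy $\hat u^\delta_k$ itself spends there: by a mean-value selection of a level set $E_{t_k}=\{\dist(\cdot,\pa\Omega)=t_k\}$, $t_k\to0$, avoiding the finitely many simplicial jumps and along which $\trace_{E_{t_k}}(\hat u^\delta_k)\to f$ in $L^2$ with bounded tangential gradients (both coming from $\hat u^\delta_k\to f\circ P$ in $H^1$ on the strip), one glues $\hat u^\delta_k$ on $\{\dist<t_k\}$ to $f\circ P$ by an affine-in-$\dist$ interpolation across a \emph{shrinking} collar of width $\eta_k\to0$ chosen so that $\eta_k^{-1}\|\trace_{E_{t_k}}(\hat u^\delta_k)-f\|_{L^2}^2\to0$; the resulting $w^\delta_k\in SBD^2$ has the prescribed traces, no jump in the collar, and $\F(w^\delta_k,1;(\pa\Omega)_\delta\setminus\ov\Omega)\le\F(\hat u^\delta_k,1;(\pa\Omega)_\delta\setminus\ov\Omega)+o_k(1)+O(\sqrt\delta)$. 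Feeding $w^\delta_k$ as strip-competitor into the localized lower-semicontinuity bound (the mechanism of Theorem~\ref{thm: liminf inequality} near $\pa\Omega$, which is what identifies $\mathcal R$ as the sharp boundary relaxation) gives $\mathcal R(u^\delta_k,f)+r(\delta)\le\F(w^\delta_k,1;(\pa\Omega)_\delta\setminus\ov\Omega)$, hence \eqref{APPCL-KEY}. Substituting \eqref{APPCL-KEY}, $\F(u^\delta_k,1;\Omega)+\mathcal R(u^\delta_k,f)\le\F(\hat u^\delta_k,1;\hat\Omega_\delta)+o_k(1)+O(\sqrt\delta)$; letting $k\to\infty$ and using \eqref{APPCL-split},
\[
\limsup_{k\to\infty}\big[\F(u^\delta_k,1;\Omega)+\mathcal R(u^\delta_k,f)\big]\le\F(u,1;\Omega)+\mathcal R(u,f)+O(\sqrt\delta).
\]
A diagonal extraction over $\delta\to0$ (the $L^2$ closeness being controlled through $\|\hat u^\delta-u\|$ and the bound on $|(\pa\Omega)_\delta\setminus\ov\Omega|$) then produces the desired $u_k\in\Cl$, with $\|u_k\|_\infty\le\|u\|_\infty$ kept throughout.

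\textbf{Main obstacle.} The delicate point is precisely \eqref{APPCL-KEY}: making rigorous that the intrinsic quantity $\mathcal R(u^\delta_k,f)$ is a genuine lower bound for the energy $\hat u^\delta_k$ spends in the thin shell $(\pa\Omega)_\delta\setminus\ov\Omega$ (a localized $\Gamma$-liminf / trace inequality for $\F$), and carrying out the interpolating surgery on the shrinking collar so that its bulk cost vanishes \emph{without} generating a surface term with a non-vanishing $b\,\H^{n-1}$ contribution --- here the mean-value selection of $E_{t_k}$ avoiding the simplicial jumps while keeping tangential gradients controlled is the technical core. The remaining ingredients --- the $O(\delta)$ bound on the strip bulk energy, the inheritance of $\Cl$ under restriction, the continuity of $\hat u^\delta_k$ across $\pa\Omega$ after an infinitesimal perturbation of its simplexes, and the truncation --- are routine in view of item 3) of Assumption~\ref{hyp on F} and the arguments already used in Propositions~\ref{propo: approx of nice function} and \ref{propo: density argument}.
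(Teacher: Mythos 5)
Your opening move coincides with the paper's: extend $u$ across $\pa\Omega$ to a slightly larger domain (the paper uses an arbitrary $H^1$ extension $w$ with $w|_{\pa\Omega}=f$, you use $f\circ P$; this difference is immaterial), so that the boundary mismatch $\trace(u)-f$ becomes an interior jump of $\hat u$, and then invoke the interior density result (Proposition \ref{propo: density argument}) on the enlarged domain. Where you diverge is in how the boundary penalty of the \emph{approximants} is controlled, and this is where your argument has a genuine gap. The paper never restricts and re-measures a trace mismatch: it keeps the extended approximants $\hat u_k$ and controls their surface energy on the \emph{closed} set $\ov\Omega$ directly through property 5) of Theorem \ref{thm CT} (the Cortesani--Toader upper semicontinuity holds for $\ov A\cap J_{u_k}$ with $A=\Omega$, hence includes the portion of jump sitting on $\pa\Omega$), applied to the integrand $\varphi(z,a,b,\nu)=a\sqrt{\A(a-b)\odot\nu\cdot(a-b)\odot\nu}+F_\infty(z,(a-b)\odot\nu)$ and to $\varphi=b$; the strip bulk terms involving $w$ appear identically on both sides and cancel. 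No gluing, no level-set selection, no localized liminf near $\pa\Omega$ is needed.

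Your route instead hinges on the unproven key estimate $\mathcal R(u^\delta_k,f)\le\F(\hat u^\delta_k,1;(\pa\Omega)_\delta\setminus\ov\Omega)-r(\delta)+o_k(1)+O(\sqrt\delta)$, and the justification you sketch does not work. First, Theorem \ref{thm: liminf inequality} is a $\Gamma$-liminf statement for the phase-field functionals $\F_\e$ along sequences $(u_\e,v_\e)$ with $\e\to0$; it cannot be ``fed'' a single sharp-interface competitor $w^\delta_k$ to produce a trace inequality for the limit functional $\F(\cdot,1)$, and $\mathcal R$ is not characterized in the paper as a relaxed reconnection cost that such an argument could invoke. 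Second, and more fundamentally, the inequality is false as stated because of the term $b\,\H^{n-1}(\{x\in\pa\Omega:\ u_k(x)\neq f(x)\})$ in $\mathcal R$: the Cortesani--Toader approximants do not preserve the boundary trace, so after restriction one typically has $\trace(u^\delta_k)\neq f$ $\H^{n-1}$-a.e.\ on $\pa\Omega$ even when the mismatch is uniformly tiny, which charges the full amount $b\,\H^{n-1}(\pa\Omega)$ to the left-hand side while the strip energy of $\hat u^\delta_k$ (which converges to the $O(\delta)$ quantity $r(\delta)$ plus the surface energy of the jump pieces inside the strip) can be far smaller. To make your scheme work you would have to modify the approximants so that they attain the datum $f$ \emph{exactly} outside a set of small $\H^{n-1}$-measure of $\pa\Omega$, which is precisely the kind of boundary surgery your plan defers to the ``delicate point''; the paper's use of property 5) on $\ov\Omega$ is designed exactly to avoid this instability, by comparing surface energies of the extended functions rather than re-measuring a trace mismatch of their restrictions.
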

\begin{proof}
Consider $\hat{\Omega}\supset \Omega$ be a slightly larger domain and consider $w\in H^1(\Omega;\R^n)$ to be such that $w\Big{|}_{\pa \Omega}=f$. Consider the extension $\hat{u}:=u\ca_{\Omega}+w\ca_{\hat{\Omega}\setminus\Omega }\in SBD^2(\hat{\Omega})$. Then, by virtue of Proposition \ref{propo: density argument}, we can find a sequence $\hat{u}_k\in \text{Cl}(\hat{\Omega};\R^n)$ such that
	\[
	\F(\hat{u}_k,1;\hat{\Omega})\rightarrow \F(\hat{u},1;\hat{\Omega})=\F(u,1)+\mathcal{R}(u,f)+\int_{\hat{\Omega}\setminus \Omega} \A e(w)\cdot e(w) \d x +\int_{\hat{\Omega}\setminus \Omega} F(x,e(w),1)   \d x
	\]
and with $\|\hat{u}\|_{\infty} \leq \|u\|_{\infty}$. If we trace through the proof of Proposition \ref{propo: density argument} we can see that the following is also guaranteed:
	\begin{align*}
		\limsup_{k\rightarrow +\infty}\int_{J_{u_k}\cap \ov{A}}\varphi(x,\hat{u}^+_k,\hat{u}^-_k,\nu_k) \d\H^{n-1}(z)& \leq \int_{J_{\hat{u}}\cap \ov{A}} \varphi(x,\hat{u}^+,\hat{u}^-,\nu)\d\H^{n-1}(z),
	\end{align*}
for any $A\subseteq \hat{\Omega}$ and for any upper semicontinuous function $\varphi$ satisfying \eqref{palle1} and \eqref{palle2}. In particular, by testing the above inequality with $A=\Omega$,
    \[
    \varphi(x,\xi,\eta,\nu)=a\sqrt{\mathbb{A}(\xi-\eta)\odot \nu\cdot (\xi-\eta)\odot \nu}+F_{\infty}(z,(\xi-\eta)\odot\nu)
    \]
and with $\varphi=b$ we can infer that
	\begin{align*}
	    \limsup_{k\rightarrow +\infty}& \int_{J_{\hat{u}_k}\cap \ov{\Omega}}[a\sqrt{\mathbb{A}([\hat{u}_k]\odot \nu_k\cdot [\hat{u}_k]\odot \nu}+F_{\infty}(z,([\hat{u}_k]\odot\nu_k) +b]\d\H^{n-1}(z)\\
	    &\leq \int_{J_{\hat{u}}\cap\ov{\Omega}}[a\sqrt{\mathbb{A}([\hat{u}]\odot \nu\cdot [\hat{u}]\odot \nu}+F_{\infty}(z,([\hat{u}]\odot\nu) +b]\d\H^{n-1}(z).
	\end{align*}
Thus
	\begin{align*}
	\limsup_{k\rightarrow +\infty} &\F(\hat{u}_k,1;\ov{\Omega})\leq \F(u,1;\ov{\Omega})
	\end{align*}
By noticing that
		\[
		\F(\cdot ,1;\ov{\Omega})= \F(\cdot ,1;\Omega)+\mathcal{R}(\cdot, f), \ \ \ 
		\]
we conclude  by simply setting $u_k:=\hat{u}_k\ca_{\Omega}\in \Cl$. 
\end{proof}
We finally notice that the same diagonalization argument exploited in the proof of Theorem \ref{thm: limsup inequality} allows us to prove the following Proposition.
\begin{proposition}\label{propo final recovery}
Consider $\Omega$ to be an open bounded set with $C^1$ boundary and fix a boundary data $f\in C^1(\pa\Omega;\R^n)$. Let $\{\e_j\}_{j\in \N}$ be a vanishing sequence of real numbers. Then, for any $u\in SBD^2(\Omega)\cap L^{\infty}(\Omega)$ there exists a subsequence $\{\e_{j_k}\}_{k\in \N}\subset \{\e_{j}\}_{j\in \N}$ and a sequence of function $(u_{k},v_{k})\in H^1(\Omega;\R^n) \times V_{\e_{j_k}}$ such that
	\[
	\e_{j_k} \rightarrow 0, \ \ \ \|u_k-u\|_{L^2}\rightarrow 0, \ \ \ \|v_k-1\|_{L^2}\rightarrow 0,
	\]
with $u_{\e}=f$, $v_{\e}=1$ on $\pa \Omega$ and 
	\[
	\limsup_{k\rightarrow +\infty} \F_{\e_{j_k} }(u_{k},v_{k}) \leq  \F(u,1)+\mathcal{R}(u,f).
	\]
Moreover $\|u_k\|_{{\infty}}\leq \|u\|_{{\infty}}$.
\end{proposition}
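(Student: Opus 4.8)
The plan is to reproduce, essentially verbatim, the diagonalization carried out in the proof of Theorem~\ref{thm: limsup inequality}, with Proposition~\ref{approx to Cl} playing the role that Proposition~\ref{propo: density argument} played there, and with the boundary-adapted recovery construction of the preceding Proposition replacing Proposition~\ref{propo: approx of nice function}. So the whole argument is a two-step limiting procedure followed by a diagonal extraction.

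First I would fix $u\in SBD^2(\Omega)\cap L^{\infty}(\Omega)$ and apply Proposition~\ref{approx to Cl} to obtain a sequence $\{w_m\}_{m\in\N}\subset\Cl$ with $w_m\to u$ in $L^2$, $\|w_m\|_{\infty}\leq\|u\|_{\infty}$, and
\[
\limsup_{m\to+\infty}\big[\F(w_m,1)+\mathcal{R}(w_m,f)\big]\leq\F(u,1)+\mathcal{R}(u,f).
\]
In particular, for every $\delta>0$ there is an index $m(\delta)$ for which $\|w_{m(\delta)}-u\|_{L^2}\leq\delta$ and $\F(w_{m(\delta)},1)+\mathcal{R}(w_{m(\delta)},f)\leq\F(u,1)+\mathcal{R}(u,f)+\delta$.

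Next, to each fixed $w=w_{m(\delta)}\in\Cl$ I would apply the preceding Proposition, which produces a family $(u_{\e}^{w},v_{\e}^{w})\in H^1(\Omega;\R^n)\times V_{\e}$ with $u_{\e}^{w}=f$, $v_{\e}^{w}=1$ on $\pa\Omega$, $(u_{\e}^{w},v_{\e}^{w})\to(w,1)$ in $L^2$, $\|u_{\e}^{w}\|_{\infty}\leq\|w\|_{\infty}\leq\|u\|_{\infty}$, and $\F_{\e}(u_{\e}^{w},v_{\e}^{w})\to\F(w,1)+\mathcal{R}(w,f)$ as $\e\to0$. Hence there is $\e_0(\delta)>0$ such that, for every $\e<\e_0(\delta)$,
\[
\|u_{\e}^{w}-w\|_{L^2}+\|v_{\e}^{w}-1\|_{L^2}+\big|\F_{\e}(u_{\e}^{w},v_{\e}^{w})-\F(w,1)-\mathcal{R}(w,f)\big|\leq\delta.
\]

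Then I would close the argument by a diagonal extraction from the given vanishing sequence $\{\e_j\}$: for each $k\in\N$ I set $\delta=\tfrac{1}{2k}$, fix $w=w_{m(1/(2k))}$, and choose $\e_{j_k}\in\{\e_j\}$ with $j_k$ strictly increasing and $\e_{j_k}<\min\{\e_0(1/(2k)),1/k\}$; putting $u_k:=u_{\e_{j_k}}^{w}$ and $v_k:=v_{\e_{j_k}}^{w}$, the triangle inequality yields $\|u_k-u\|_{L^2}\to0$, $\|v_k-1\|_{L^2}\to0$, $\e_{j_k}\to0$, $u_k=f$ and $v_k=1$ on $\pa\Omega$, $\|u_k\|_{\infty}\leq\|u\|_{\infty}$, and
\[
\F_{\e_{j_k}}(u_k,v_k)\leq\F(w,1)+\mathcal{R}(w,f)+\tfrac{1}{2k}\leq\F(u,1)+\mathcal{R}(u,f)+\tfrac1k,
\]
so that $\limsup_{k\to+\infty}\F_{\e_{j_k}}(u_k,v_k)\leq\F(u,1)+\mathcal{R}(u,f)$. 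I do not expect any genuine obstacle here: the only subtlety is that Proposition~\ref{approx to Cl} provides merely a $\limsup$ bound and not convergence of $\F(w_m,1)+\mathcal{R}(w_m,f)$, but this is exactly what the conclusion — itself a one-sided inequality — requires; the preservation of the boundary datum $f$, of the bound $\|\cdot\|_{\infty}\leq\|u\|_{\infty}$, and of the $L^2$ convergences is inherited verbatim from the two quoted results.
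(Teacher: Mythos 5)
Your proposal is correct and is exactly the argument the paper intends: the paper does not even write out a proof of Proposition \ref{propo final recovery}, stating only that it follows from ``the same diagonalization argument exploited in the proof of Theorem \ref{thm: limsup inequality}'', with Proposition \ref{approx to Cl} and the boundary-adapted recovery construction replacing Propositions \ref{propo: density argument} and \ref{propo: approx of nice function}. Your observation that the one-sided $\limsup$ bound from Proposition \ref{approx to Cl} suffices for the one-sided conclusion is precisely the right remark.
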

 \subsection{Proof of Theorem \ref{thm: existence}}
We are finally in the position to prove Theorem \ref{thm: existence}.
\begin{proof}[Proof of Theorem \ref{thm: existence}]
We divide the proof in three steps. \\
\text{}\\
\textbf{Step one:} \textit{existence for $\gamma_{\e}$}. Fix $\e>0$ and consider $(u_k,v_{k})$, a  minimizing sequence. Then
	\[
	\sup_{k\in \N} \left\{\int_{\Omega} |e(u_k )|^2 \d x+\|v_k\|_{H^1} \right\}<+\infty.
	\]
In particular, Korn's inequality\footnote{the arbitrary rigid displacement is here fixed by the prescription of the boundary condition.} combined with the $L^1$-compactness for sequences with uniformly bounded $H^1$-norm gives us that $u_k \rightarrow u\in H^1$, $v_k \rightarrow v \in V_{\e}$ in $L^1$ and $e(u_k) \rightharpoonup e(u)$ in $L^2$ with also $u=f$, $v=1$ on $\pa \Omega$. Moreover, because of assumption \eqref{1.c} on $F$ and due to the uniform $L^2$ bound on the symmetric part of the gradient $e(u_k)$ we have
	\begin{align*}
	\left| \int_{\Omega} [F(x,e(u_k),v_k) - F(x,e(u_k),v) ]\d x\right|\leq \int_{\Omega} \omega_F(v_k;v)|e(u_k)|\d x \rightarrow 0.
	\end{align*}
Furthermore, due to the weak convergence of $e(u_k)$ and to the strong convergence of $v_k$ (see for example \cite[Theorem 2.3.1]{buttazzo1989semicontinuity})
	\begin{align*}
	\liminf_{k\rightarrow +\infty}\int_{\Omega} v_k \mathbb{A} e(u_k) \cdot e(u_k) \d x \geq \int_{\Omega} v \mathbb{A} e(u) \cdot e(u) \d x
	\end{align*}
All this considered yields, together with the convexity of $F(x,\cdot,v)$,
	\begin{equation}
	\liminf_{k\rightarrow +\infty} \mathcal{F}_{\e}(u_k, v_{k})\geq  \int_{\Omega} v \mathbb{A} e(u) \cdot e(u) dx +\frac{1}{\e}\int_{\Omega} \psi(v) \d x+\int_{\Omega} F(x,e(u),v)\d x.\nonumber
	\end{equation}
	In particular, by the application of the direct method of calculus of variation we achieve existence for $\gamma_{\e}$, $\e>0$.\\
	\text{}\\
\textbf{Step two:} \textit{liminf inequality}. Let $\{(u_{\e},v_{\e})\}_{\e>0} \subset H^1(\Omega;\R^n)\times V_{\e}$ be such that $u_{\e}\rightarrow u_0$, $v_{\e}\rightarrow 1$ in $L^1$ and with $u_{\e}=f$, $v_{\e}=1$ on $\pa \Omega$ for all $\e>0$. Then
	\begin{equation}\label{eqn:liminf bd datum}
	\liminf_{\e\rightarrow 0} \F_{\e}(u_{\e},v_{\e})\geq \F(u_0,1)+\mathcal{R}(u_0,f).
	\end{equation}
Indeed, by considering $\hat{\Omega}\supset \Omega$, a function $w\in H^1(\hat{\Omega};\R^n)$ with $w\Big{|}_{\pa \Omega}=f$ and the extension \	\[
\hat{u}_{\e}=u_{\e}\ca_{\Omega}+w\ca_{\hat{\Omega}\setminus \Omega}, \ \ \ \hat{v}_{\e}=v_{\e}\ca_{\Omega}+\ca_{\hat{\Omega}\setminus \Omega}
\]
we can notice that $\hat{u}_{\e}\rightarrow \hat{u}_0$, $\hat{v}_{\e}\rightarrow 1$. Moreover
	\[
	\F_{\e}(\hat{u}_{\e},\hat{v}_{\e};\hat{\Omega})=\F_{\e}(u_{\e},v_{\e};\Omega) + \int_{\hat{\Omega}\setminus \Omega} [\mathbb{A}e(w)\cdot e(w) + F(x,e(w),1)]\d x
	\]
and thanks to Theorem \ref{thm: liminf inequality} we have
\begin{align*}
\int_{\hat{\Omega}\setminus \Omega}& [\mathbb{A}e(w)\cdot e(w) + F(x,e(w),1)]\d x+\liminf_{\e\rightarrow 0} \F_{\e}(u_{\e},v_{\e};\Omega) = \liminf_{\e\rightarrow 0} \F_{\e}(\hat{u}_{\e},\hat{v}_{\e};\hat{\Omega}) \\
& \geq \F(\hat{u}_0,1;\hat{\Omega})=\F(u_0,1;\Omega)+\mathcal{R}(u_0,f) + \int_{\hat{\Omega}\setminus \Omega} [\mathbb{A}e(w)\cdot e(w) + F(x,e(w),1)]\d x,
\end{align*}
leading to \eqref{eqn:liminf bd datum}.\\
\text{}\\
\textbf{Step three:} \textit{proof of \eqref{eqn: limit min} and existence of a minimizer}. Let $\{\e_{j}\}_{j\in \N}$ be the sequence such that
$	\limsup_{\e\rightarrow 0} \gamma_{\e}=\lim_{j\rightarrow +\infty} \gamma_{\e_j}$. 
Thanks to Proposition \ref{propo final recovery} we have that, for any fixed $u_0\in SBD^2(\Omega)$ we can find $\{\e_{j_k}\}_{k\in \N}\subset \{\e_j\}_{j\in \N}$ and $(u_k,v_k)\in H^1(
\Omega;\R^n)\times V_{\e_{j_k}}$ with $u_k=f$ , $v_k=1$ on $\pa \Omega$ and such that it holds
	\[
	\F(u_0,1)+\mathcal{R}(u_0,f) \geq  \limsup_{k \rightarrow +\infty} \F_{\e_{j_k}}(u_{k},v_{k}) \geq  \limsup_{k\rightarrow +\infty} \gamma_{\e_{j_k}}=\limsup_{\e\to0} \gamma_{\e}.
	\]
Thus, by taking the infimum among $u_0\in SBD^2(\Omega)$ we get
	\begin{equation}\label{eqn:finita la esist}
	\gamma_{0}\geq \limsup_{\e \rightarrow 0} \gamma_{\e}.
	\end{equation}
On the other side, by denoting with $(\bar{u}_{\e},\bar{v}_{\e})$ the minimizers at the level $\gamma_{\e}$, we can ensure (thanks to the compactness Theorem \ref{main thm: comp}) that, there exists at least an accumuluation point and that any accumulation point has the form $(u_0,1)$ for some $u_0\in SBD^2(\Omega)$. Thus step two guarantees that
	\[
	\liminf_{\e\rightarrow 0} \gamma_{\e} = \liminf_{\e\rightarrow 0} \F_{\e}(\bar{u}_{\e},\bar{v}_{\e}) \geq \F(u_0,1)+ \mathcal{R}(u_0,f) \geq \gamma_0.
	\]
Combining this previous relation with \eqref{eqn:finita la esist} proves \eqref{eqn: limit min} and demonstrates also that any accumulation point of $\{(\bar{u}_{\e},\bar{v}_{\e})\}_{\e>0}$ provides a minimizer for $\gamma_0$.
\end{proof}

\section{Selected applications} \label{sct: Example}
We now provide examples of energy with some specific functions $F$ of interests with a view to applications.
As a title of example we consider the case where $\psi(v)=(1-v)^2$ yielding
$	a=2\sqrt{\alpha}$ and $b=\frac{2}{3}$.
\subsection{A simple model of fracking}
In the case of hydraulic fracturing, with a simple variational model as studied in \cite{xavier2017topological}, the phenomena is modeled through a potential of the type
	\[
	F(x,M,v)= - p(x,M,v) \trace(M).
	\]
We directly state the hypothesis on $p$ that guarantees  our $\Gamma$-convergence result \ref{thm: main thm g conv} and the existence Theorem \ref{thm: existence}. In particular, in order to apply our  results we require that the pressure $p$ is a concave function of the variable $M$ and that
	\begin{itemize}
	\item[1)] $p(\cdot,M,0)\in C^0(\Omega)$ for all $M\in M^{n\times n}_{sym}$;
	\item[2)] $p(x,\cdot,v)$ is a concave function for all $(x,v)\in \Omega\times [0,1]$; 
	\item[3)] $ -\sigma|x-y|\leq  p(x,M,v)-p(y,M,v)\leq \ell|x-y|$ for all $x,y\in \R^n$ and all $(M,v)\in  M_{sym}^{n\times n}\times [0,1]$
where $\ell>0$ is any real constant and 
	\begin{equation}\label{bound on p2}
	0<\sigma< \max_{\lambda\in (0,1)}\left\{ \frac{2\sqrt{\alpha \psi(\lambda)}}{\sqrt{\kappa}(1+2\sqrt{\alpha |\Omega|\psi(\lambda)/\lambda})}\right\}   < 2\sqrt{\frac{\alpha \psi(0)}{ \kappa }};
	\end{equation}
\item[4)] having set 
\begin{align*}
\omega_p(s;t)&:=\sup\left\{|p(x,M,s)-p(x,M,t)| \ : \ (x,M)\in \R^n\times M_{sym}^{n\times n}\right\}
\end{align*}
then 
	\[
	\displaystyle\lim_{s\rightarrow t} \omega_p(s;t)=0.
	\]
	\end{itemize}
Under these assumptions the potential $F=-p(x,M,v)\trace(M)$ satisfies Assumption \ref{hyp on F} and \eqref{1.a},\eqref{1.b},\eqref{1.c}. Moreover
	\[
	F_{\infty}(x,M)= -\trace(M) \lim_{t\rightarrow + \infty} p(x,tM,0).
	\]

\subsubsection{Pressure constant in $e(u)$ and linear in $v$}
We first examine the case 
	\[
	p(x,M,v):=(mv+q) \rho(x)
	\]
where $\rho\in L^{\infty}$ is a Lipschitz function and $m,q\in\R$. Provided $\rho$ has suitably small $L^{\infty}$ norm, hypothesis 1), 2), 3) and 4)  are clearly satisfied. We have
	\[
	p(x,M,1)=(m+q)\rho(x), \ \ p(x,M,0)=q\rho(x).
	\]
Moreover $F_{\infty}(x,M)=q\rho(x)\trace(M)$.
Hence the $\Gamma-$limit of the energy \eqref{eqn:energy e} is given by
	\begin{align*}
	\Phi(u)&:=\int_{\Omega} [\mathbb{A} e(u)\cdot e(u) - (m+q)\rho(x) \dive(u)]\d x +b\H^{n-1}(J_u)+\\
	&+a \int_{J_u} \sqrt{\mathbb{A} ([u]\odot \nu \cdot) ([u]\odot \nu)} \d \H^{n-1}(z)- q\int_{J_u} \rho(z) [u](z)\cdot \nu(z) \d \H^{n-1}(z).
	\end{align*}
		The model in \cite{xavier2017topological} corresponds to $m=0$ and $\rho$ is a constant taken as a hydrostatic pressure acting as a boundary condition inside the crack considered as impermeable. Note that in \cite{xavier2017topological} exactly the approximation of this work is proposed. Another phase-field approximation closer to the original Ambrosio-Tortorelli model is considered in \cite{B2013}, with $q=0$ and a constant $\rho$. Note however that their claimed limit functional is not what we proved to be.
\subsubsection{Pressure non constant in $e(u)$: isotropic and anisotropic case}
We now examine the case where the pressure $p$ has a concave dependence on the variable $M$:
	\[
	p(x,M,v):=\rho(x,v)g(M).
	\]
A suitable choice of $\rho$ ensures that 1) and 2) are in force. In order to guarantee 3) (and thus 4) provided a suitable $\rho$) we ask also that $\|g\|_{ L^{\infty}}<c$ for an appropriate constant $c$. In particular any concave bounded function is such that
	\[
	\lim_{t\rightarrow +\infty} g(tM)=\gamma(M)
	\]
exists finite. Thus the $\Gamma-$limit of the energy \eqref{eqn:energy e} is given by
\begin{align*}
	\Phi(u):=&	\int_{\Omega} [\A e(u)\cdot e(u) -  \rho(x,1) g(e(u))\dive(u)] \d x\\
	&+a \int_{J_u} \sqrt{\A ([u](z)\odot \nu(z)) \cdot ([u](z)\odot \nu(z)) } \d \H^{n-1}(z)\\
&+b \H^{n-1}(J_u)- \int_{J_u} \rho(z,0) \gamma([u](z)\odot \nu(z)) [u](z) \cdot \nu(z) \d \H^{n-1}(z).
	\end{align*}
	This case corresponds to a more realistic  fracking model where the pressure is a thermodynamic variable with a certain constitutive law (as related to the Biot's coefficient and the pore-pressure \cite{B2013}), instead of a hydrostatic pressure given as a model datum. In particular this case applies to the
case where the crack is no more impermeable, and hence the pressure satisfies a certain balance equation in the whole domain.

\subsection{Pressure almost constant in $x$: the two-rocks model }

Of particular interest in the case of hydraulic fracking is the case where the pressure $p$ takes values $p_1(e(u)), p_2(e(u))$ in two different region of our ambient space $\Omega$ and quickly varies from $p_1$ to $p_2$ in a small layer of size $\delta$ bordering the two regions. This models the situation of a so-called stratified domain, i.e., where we have two 
permeable rocks (or impermeable if $p_i$ assumes a constant value in  each rock) separated by an interface (where the pressure is linearly interpolated). As a title of example we consider the situation depicted in figure \ref{fig: one}. In particular we set
\begin{equation}
p(x,M,v):=\left\{
\begin{array}{ll}
\rho(v)p_1(M) & \text{if $x\in Q_1$ and $d(x,S)>\delta$};\\
\rho(v) p_2(M) & \text{if $x\in Q_2$ and $d(x,S)>\delta$};\\
 \frac{d(x,S)}{\delta}  \rho(v) p_1(M)  & \text{if $x\in Q_1$ and $d(x,S)\leq \delta$};\\
\frac{d(x,S)}{\delta} \rho(v) p_2(M)  & \text{if $x\in Q_2$ and $d(x,S)\leq \delta$};
\end{array}
 \right.
\end{equation}
If $p_i$ are concave function and $\|\rho p_i\|_{\infty}$ is suitably small, we can surely choose $\rho$ so that conditions 3) and 4) are satisfied. Moreover, setting 
	\[
	p_i^{\infty}(x,M)=\lim_{t\rightarrow +\infty} p_i(tM)
	\]
and
\begin{equation}
p_{\infty}(x,M):=\left\{
\begin{array}{ll}
\rho(0)p_1^{\infty}(M) & \text{if $x\in Q_1$ and $d(x,S)>\delta$};\\
\rho(v) p_2^{\infty}(M) & \text{if $x\in Q_2$ and $d(x,S)>\delta$};\\
 \frac{d(x,S)}{\delta}  \rho(0) p_1^{\infty}(M)  & \text{if $x\in Q_1$ and $d(x,S)\leq \delta$};\\
\frac{d(x,S)}{\delta} \rho(0) p_2^{\infty}(M)  & \text{if $x\in Q_2$ and $d(x,S)\leq \delta$};
\end{array}
 \right.
\end{equation}
we get that the limiting energy reads
\begin{align*}
	\Phi(u):=&	\int_{Q} [\A e(u)\cdot e(u) -  p(x,e(u),1)\dive(u)] \d x\\
	&+a \int_{J_u} \sqrt{\A ([u](z)\odot \nu(z)) \cdot ([u](z)\odot \nu(z)) } \d \H^{n-1}(z)\\
&+b \H^{n-1}(J_u)-   \int_{J_u}   p_{\infty}(x,[u](z)\odot \nu(z)) [u](z) \cdot \nu(z) \d \H^{n-1}(z),
	\end{align*}
	that can be rearranged as
	\begin{align*}
	\Phi(u):=&	\sum_{i=1}^2\int_{Q_i\setminus Q_{\delta} } [\A e(u)\cdot e(u) -  \rho(1) p_i(e(u))\dive(u)] \d x\\
	&-   \rho(0)\int_{J_u\cap (Q_i\setminus Q_{\delta} )}  p_{i}^{\infty}([u](z)\odot \nu(z)) [u](z) \cdot \nu(z) \d \H^{n-1}(z)\\
	& +	\sum_{i=1}^2\int_{Q_i\cap Q_{\delta} } [\A e(u)\cdot e(u) -  \frac{d(x,S)\rho(1)}{\delta} p_i(e(u))\dive(u)] \d x\\
	&-   \rho(0)\int_{J_u\cap (Q_i\setminus Q_{\delta} )}   \frac{d(x,S)}{\delta} p_{i}^{\infty}([u](z)\odot \nu(z)) [u](z) \cdot \nu(z) \d \H^{n-1}(z)\\
	&+a \int_{J_u} \sqrt{\A ([u](z)\odot \nu(z)) \cdot ([u](z)\odot \nu(z)) } \d \H^{n-1}(z)+b \H^{n-1}(J_u).
	\end{align*}
The case with several rocks can be obtained in the same way.
\begin{figure}\label{fig: one}
\includegraphics[scale=0.8]{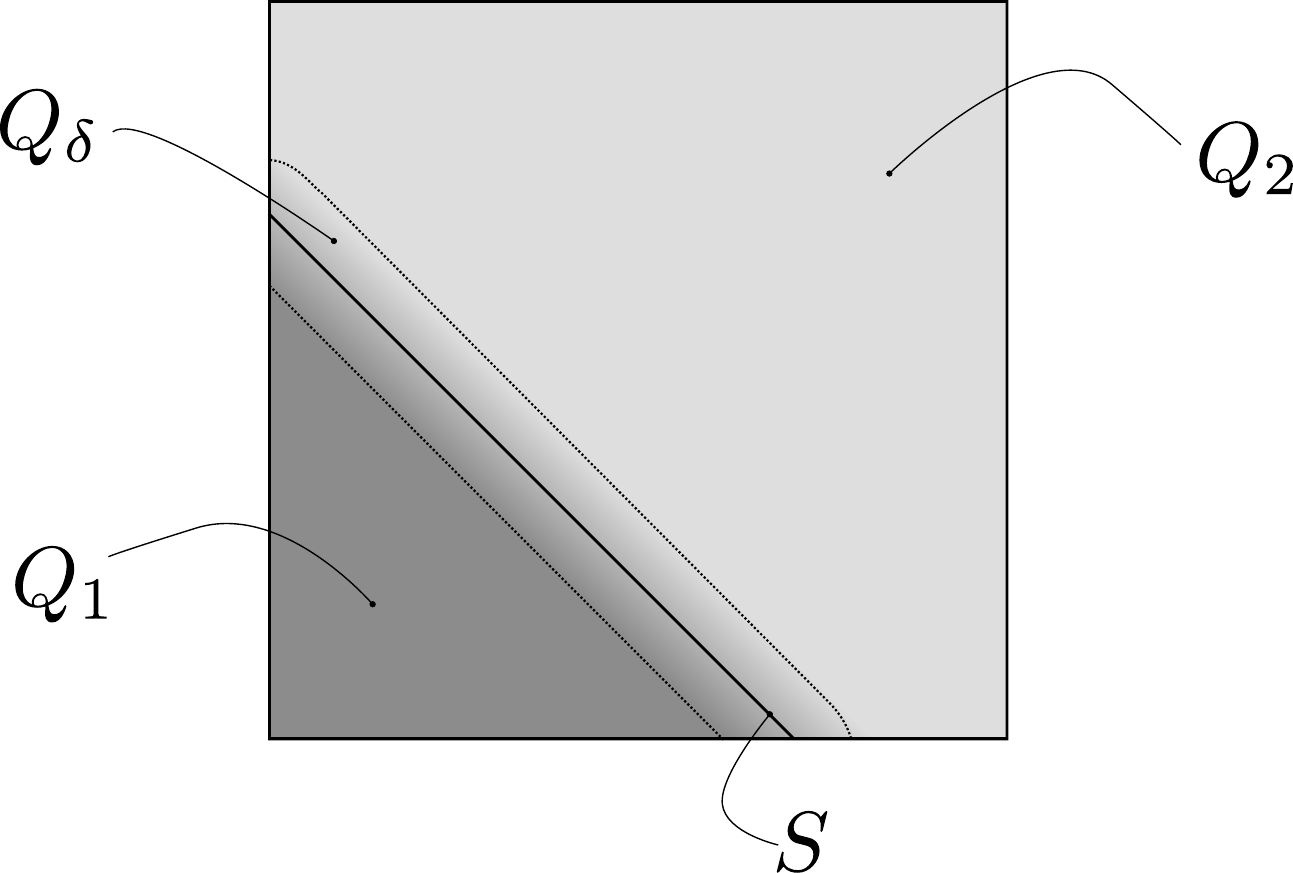}\caption{The two different materials behave differently when subject to an elastic strain. This is modeled by considering two different pressures on each component. In the picture, different gray corresponds to different value of $p(\cdot,M,v)$. Notice that the role of the layer ($\delta$) around the interface $S$ can be made as small as we like and it is adopted only to satisfy the continuity assumption on the spatial behavior of the pressure and to take into account eventual situations where $\H^{n-1}(J_u\cap \pa Q_1 \cap \pa Q_2 )>0$.}
\end{figure}
\subsection{A model of plastic slip: $F=p|e(u)|$} Now we analyze the case 
	\[
	F(x,M,v):= p(x,v) g(|M|),
	\]
	that consists of a generalization of a phase-field approximation of plastic slip as discussed in \cite{AmbrosioElast} for the anti-plane case.
%where $g$ is a convex function with sublinear growth and $p$ is a suitable function providing that $F$ meets the requirement \ref{hyp on F}.
%\begin{itemize}
%\item[1)] $p(\cdot,0)\in C^0(\Omega)$;
%\item[2)] $ -\sigma 	\leq p(x,v) \leq \ell$ for all $(x,v)\in \R^n \times[0,1]$ where $\ell>0$ is any real constant and 
%	\begin{equation}\label{bound on p3}
%	0<\sigma< \max_{\lambda\in (0,1)}\left\{ \frac{2\sqrt{\alpha \psi(\lambda)}}{\sqrt{\kappa}(1+2\sqrt{\alpha |\Omega|\psi(\lambda)/\lambda})}\right\}   < 2\sqrt{\frac{\alpha \psi(0)}{ \kappa }};
%	\end{equation}
%\item[3)] having set 
%\begin{align*}
%\omega_p(s;1)&:=\sup\left\{  |p(x,s)-p(x,1)|  \ : \  x\in \R^n \right\},\\
%\omega_p(s;0)&:=\sup\left\{  |p(x,s)-p(x,0)|  \ : \ x \in \R^n \right\},
%\end{align*}
%then 
%	\[
%	\displaystyle\lim_{s\rightarrow 1} \omega_p(s;1)=\lim_{s\rightarrow 0} \omega_p(s;0)=0.
%	\]
%\end{itemize}

By possibly making additional restriction on the function $p$, a functional dependence on $M$ can be considered. However, for the sake of clarity and as a title of example we would avoid such a dependence. It is immediate that
	\[
	F_{\infty}(x,M)=p(x,0)g_{\infty} |M|,
	\]
where $g_{\infty}:=\lim_{t\rightarrow +\infty} \frac{g(t)}{t}$. Thus, the limit energy in this scenario is
	\begin{align*}
	\Phi(u):=&	\int_{\Omega} \left[\A e(u)\cdot e(u) -  p(x,1)g(|e(u)|)\right]  \d x\\
	&+a \int_{J_u} \sqrt{\A ([u](z)\odot \nu(z)) \cdot ([u](z)\odot \nu(z)) } \d \H^{n-1}(z)\\
&+b \H^{n-1}(J_u)+g_{\infty} \int_{J_u} p(z,0)  |[u](z)\odot \nu(z)|  \d \H^{n-1}(z).
	\end{align*}

\subsection{The Tresca yield model in elasto-plasticity: $F=\lambda_{max}(\mathbb{A}e(u)) - \lambda_{min}(\mathbb{A}e(u))$}This is so far an academic example in the sense that no such criterion, though important in engineering, is known to the authors as implemented in any variational setting so far.
Nevertheless, interpreting $p$ as a Lagrange multiplier, could provide a model with a sort of averaged Tresca threshold. Consider the operators
	\[
	\lambda_{max}(\mathbb{A}M):=\max_{i=1,\ldots,n}\{\lambda_i(\mathbb{A}M)\}
	\]
and
	\[
	\lambda_{min}(\mathbb{A}M):=\min_{i=1,\ldots,n}\{\lambda_i(\mathbb{A}M)\}
	\]
where $\lambda_i(P)$ denotes the $i$-th eigenvalue of the matrix $P$. This function are, respectively convex and concave and  
	\[
	\lambda_{max}(\mathbb{A}e(u))-\lambda_{min}(\mathbb{A}e(u))\leq \lambda_{max}(\mathbb{A}e(u))\leq \|\mathbb{A}\| |M|.
	\]
Hence, by setting
	\[
	F(x,M,v)=p(x,v)g( \lambda_{max}(\mathbb{A}e(u))-\lambda_{min}(\mathbb{A}e(u)) ) 
	\]
%
%since
%	\[
%	\lambda(\mathbb{A}M):=\sup_{v\in S^{n-1}}\{|\mathbb{A}M v|\}
%	\]
%and thus, being the supremum of convex functions, it must be convex.
provided $g$ is a convex function with sublinear growth, the class of function $p$ such that hypothesis \ref{hyp on F} and \eqref{1.a},\eqref{1.b} and \eqref{1.c} on $F$ are satisfied is not trivial.
%\begin{itemize}
%\item[1)] $p(\cdot,0)\in C^0(\Omega)$;
%\item[2)] $ -\sigma 	\leq p(x,v) \leq \ell,$ for all $(x,v)\in \R^n \times[0,1]$ where $\ell>0$ is any real constant and 
%	\begin{equation}\label{bound on p3}
%	0<\sigma< \max_{\lambda\in (0,1)}\left\{ \frac{2\sqrt{\alpha \psi(\lambda)}}{\sqrt{\kappa}(1+2\sqrt{\alpha |\Omega|\psi(\lambda)/\lambda})}\right\}   < 2\sqrt{\frac{\alpha \psi(0)}{ \kappa }};
%	\end{equation}
%\item[2)] having set 
%\begin{align*}
%\omega_p(s;1)&:=\sup\left\{  |p(x,s)-p(x,1)|  \ : \  x\in \R^n \right\},\\
%\omega_p(s;0)&:=\sup\left\{  |p(x,s)-p(x,0)|  \ : \ x \in \R^n \right\},
%\end{align*}
%then 
%	\[
%	\displaystyle\lim_{s\rightarrow 1} \omega_p(s;1)=\lim_{s\rightarrow 0} \omega_p(s;0)=0.
%	\]
%\end{itemize}
Notice now that
	\begin{align*}
	\lambda_{max}(t\mathbb{A}M)=t\lambda_{max}(\mathbb{A}M), \ \ \ \ \ \lambda_{min}(t\mathbb{A}M)=t\lambda_{min}(\mathbb{A}M), 
	\end{align*}
and thus, as above, we get
	\[
	F_{\infty}(x,M)=g_{\infty}p(x,0) (\lambda_{max}(\mathbb{A}M) - \lambda_{min}(\mathbb{A}M)).
	\]
where $	g_{\infty}=\lim_{t\rightarrow +\infty} \frac{g(t)}{t}$. The limit energy here is 
	\begin{align*}
	\Phi(u):=&	\int_{\Omega} \left[\A e(u)\cdot e(u) -  p(x,1)g(\lambda(\mathbb{A}e(u) ))\right]  \d x\\
	&+a \int_{J_u} \sqrt{\A ([u](z)\odot \nu(z)) \cdot ([u](z)\odot \nu(z)) } \d \H^{n-1}(z)\\
&+b \H^{n-1}(J_u)+g_{\infty}\int_{J_u} p(z,0)(\lambda_{max}(\mathbb{A}([u]\odot \nu) )-\lambda_{min}(\mathbb{A} ([u]\odot \nu) ))  \d \H^{n-1}(z).
	\end{align*}
	
	\subsection{The non-interpenetration condition}
It is well-known that an opening crack should satisfy the non-interpenetration condition $[u]\cdot \nu\geq0$ which is not enforced so far by the Lagrangians we considered. In particular we would like to have a model where $([u]\cdot \nu)^-$ is not energetically influent in the evolution of the system.  Having set $\mathcal{H}(u):=\int_{J_u}|([u]\cdot \nu)^-|(z)d\H^{n-1}$, from a variational point of view we can define a minimization problem for an energy $\mathcal{G}$ subject to a non-interpenetration condition as
	\begin{align}\label{NI}
	\inf\{\mathcal G(u)|  \ u\in\mathcal A_d \mbox{ and } \H(u)=0\},
	\end{align}
where $\mathcal{A}_d$ is a suitable admissible class. The associated Lagrangian to such a problem reads as
	$$
	\mathcal L(u,p):=\mathcal G(u)+\int_{J_u}p(z)([u]\cdot \nu)^-(z)d\H^{n-1},\quad (u,p)\in\mathcal A_d\times L^\infty(J_u;\H^{n-1}).
	$$
	It is a well-known result of convex optimization (see e.g. \cite[Proposition 3.3.]{bonnans2013perturbation}) that if
	\begin{align}\label{opt}
	\displaystyle (u,p)\in \displaystyle\mathrm{arg}\min_{(u,p)\in \mathcal A_d} \mathcal \{\L(u,p)\}, \ \ \ \  \mbox{ with } p\mbox{ s.t. }\int_{J_u}p(z)([u]\cdot \nu)^-(z)d\H^{n-1}\geq0,
	\end{align}
	then $u$ is a solution of \eqref{NI}. Following our approach we can write a Lagrangian by exploiting our lower order potential $F$. An appropriate low-order potential for problem \eqref{opt} can be chosen as
	$$
	F(x,M,v)=(1-v)^2p(x)\max\{-\trace(M), 0\}=(1-v)^2 p(x) \trace(M)^- .
	$$
Notice that, $M\mapsto \max\{-\trace(M), 0\}$ is a positive convex function and with sublinear growth (since $|\max\{a,b\}|\leq |a|+|b|).$ In particular a suitable choice of $p$ will ensure that our hypothesis on $F$ \ref{hyp on F} together with \eqref{1.a},\eqref{1.b} and \eqref{1.c} are satisfied. Notice that, for $t>0$, one has $\trace(tM)^-=t\max\{-\trace(M), 0\}$, 
and thus
	\[
	F_{\infty}(x,M)=p(x)\trace(M)^- 
	\]
With these carefulness we can $\Gamma$-approximate the Lagrangian
	\begin{align*}
	\Phi(u):=&	\int_{\Omega}  \A e(u)\cdot e(u)\d x+a \int_{J_u} \sqrt{\A ([u](z)\odot \nu(z)) \cdot ([u](z)\odot \nu(z)) } \d \H^{n-1}(z)\\
&+b \H^{n-1}(J_u)+\int_{J_u} p(z)([u]\cdot \nu)^- \d \H^{n-1}(z)
	\end{align*}
by
	\begin{align*}
	\F_{\e}(u,v)=\int_{\Omega}  v \A e(u)\cdot e(u) +\frac{1}{\e}\int_{\Omega} \psi(v) d x  +\int_{\Omega} (1-v)^2 p(x) \dive(u)^{-}\d x.
	\end{align*}
\section{Appendix} \label{sct: appendix}

\subsection{A semicontuity result on $SBD$}
We now proceed to the proof of a lower semicontinuity result. This result can be derived by gathering several results available in the  literature. We retrieve them here and we give a brief sketch of the proof of the main result in order to present our work as self-contained as possible . Let us start with the following Proposition:

\begin{proposition}\label{propo: justification of limit}
For any fixed $L\in M_{sym}^{n\times n}$ there exists a function $F_{\infty}(x,M;L):M_{sym}^{n\times n} \rightarrow \R$ such that
	\[
	\lim_{t\rightarrow+\infty} \frac{F(x,L+tM,0) - F(x,L,0)}{t} =F_{\infty}(x,M;L).
	\]
Moreover $F_{\infty}(x,r M;L)=rF_{\infty}(x,M;L)$ for all $r\in \R^+$.
\end{proposition}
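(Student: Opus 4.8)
The plan is to reduce everything to a one-variable convexity argument. Fix $x\in\Om$ and $L\in M^{n\times n}_{sym}$, and set $f(N):=F(x,N,0)$, which by item 2) of Assumption \ref{hyp on F} is convex on $M^{n\times n}_{sym}$ and by item 3) satisfies $-\sigma|N|\le f(N)\le \ell|N|$. For the fixed $M\in M^{n\times n}_{sym}$ I would then introduce $g(t):=f(L+tM)$ for $t\in\R$, which is again convex, being the composition of $f$ with an affine map.

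First I would invoke the elementary fact, already used in Remark \ref{rmk: Lipschitz function}, that for a convex $g:\R\to\R$ the difference quotient $t\mapsto\frac{g(t)-g(0)}{t}$ is non-decreasing on $(0,+\infty)$. Consequently $\lim_{t\to+\infty}\frac{g(t)-g(0)}{t}$ exists in $(-\infty,+\infty]$. To see the limit is finite I would use the linear upper bound from item 3): since $g(t)-g(0)=f(L+tM)-f(L)\le \ell|L+tM|+\sigma|L|$, one has
\[
\frac{g(t)-g(0)}{t}\le \ell\,\frac{|L+tM|}{t}+\frac{\sigma|L|}{t}\xrightarrow[t\to+\infty]{}\ell|M|<+\infty,
\]
and symmetrically $f(L+tM)\ge -\sigma|L+tM|$ gives a finite lower bound. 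This allows me to define
\[
F_{\infty}(x,M;L):=\lim_{t\to+\infty}\frac{F(x,L+tM,0)-F(x,L,0)}{t}\in[-\sigma|M|,\ell|M|].
\]

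Finally, for the positive $1$-homogeneity, given $r>0$ I would substitute $s=rt$:
\[
F_{\infty}(x,rM;L)=\lim_{t\to+\infty}\frac{f(L+trM)-f(L)}{t}=\lim_{s\to+\infty}r\,\frac{f(L+sM)-f(L)}{s}=r\,F_{\infty}(x,M;L).
\]
There is no serious obstacle here; the only point requiring a line of care is the finiteness of the monotone limit, which is precisely where the sublinear (in fact linear) bound $F(x,M,v)\le\ell|M|$ of item 3) in Assumption \ref{hyp on F} is used — without it the recession function need not be well defined. For the applications in the main body one specializes to $L=0$, noting $F(x,0,0)=F(z,0,0)$, thereby recovering the quantity $F_{\infty}(z,M)$ used throughout; and should local uniformity in $x$ ever be needed, it follows from item 1) together with the monotonicity of the convergence above (Dini's theorem), although the statement as given only asserts pointwise existence.
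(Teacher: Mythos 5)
Your argument is correct and is essentially the paper's own proof: convexity of $F(x,\cdot,0)$ makes the difference quotient $t\mapsto\frac{F(x,L+tM,0)-F(x,L,0)}{t}$ nondecreasing, the linear bound from item 3) of Assumption \ref{hyp on F} gives finiteness of the monotone limit, and positive $1$-homogeneity follows by the rescaling $s=rt$ (which the paper states simply as holding by definition of $F_{\infty}$). No gaps to report.
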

\begin{proof}
Consider, for fixed $M\in \times M_{sym}^{n\times n}$ and $x\in \Omega$ the quantity
		\[
		f(t):=\frac{F(x,L+tM,0)-F(x,L,0)}{t}.
		\]
Due to the convexity of $F(x,\cdot, 0)$ we deduce that $f(t)$ is increasing on $(0,+\infty)$. Moreover, assumption 1) in \ref{hyp on F}  also guarantees that
	\[
	\left| \frac{F(x,L+tM,0)-F(x,L,0)}{t}\right| \leq \ell \left(\frac{|L| - F(x,L,0)}{t} \right) + \ell |M|.
	\]
In particular 
	\[
	\lim_{t\rightarrow+\infty} f(t)=\text{exists finite}.
	\] 
Thus there exists a function $F_{\infty}(x,M;L)$ such that
	\[
	\lim_{t\rightarrow+\infty} f(t)=F_{\infty}(x,M;L).
	\] 
By definition of $F_{\infty}$ we have finally that
	\begin{align*}
	F_{\infty}(x,rM,0;L)&=rF_{\infty}(x,M;L).
	\end{align*}
	\end{proof}
\begin{remark}\rm{We can think the function $F_{\infty}(x,\cdot ;L)$ as a function defined on the unit sphere of $M_{sym}^{n\times n}$ and extended homogeneusly on the whole space. }
\end{remark}

The first thing we need is the following  decomposition Lemma, holding for convex function with suitable regularity, which as a Corollary yields the independence of the function $F_{\infty}$ from the starting point $L$.

\begin{proposition}\label{prop: convex decomposition}
Let $G:\Omega \times M_{sym}^{n\times n} \rightarrow \R$ be a function such that $G(x,M)$ is lower semicontinuous in $(x,M)$, $G(x,\cdot)$ is convex for all $x\in \Omega$ and $|G(x,M)|<\ell |M|$ for some $\ell\in \R$ and for all $(x,M)$. Then there exists two families of continuous function $\{ a_j(x): \Omega \rightarrow M_{sym}^{n\times n} \}_{j\in\N}$ and $\{b_j(x): \Omega \rightarrow \R\}_{j\in \N}$ such that
	\[
	G(x,M)=\sup_{j\in \N}\{ a_j(x)\cdot M + b_j(x)\}
	\]
and 
	\[ 
	\lim_{t\rightarrow +\infty} \frac{G(x,L+tM)-G(x,L)}{t} =\sup_{j\in \N}\{ a_j(x)\cdot M \}
	\]
 for any $L\in M_{sym}^{n\times n}$.
\end{proposition}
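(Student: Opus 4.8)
The plan is to produce the supporting hyperplanes in a \emph{continuous} fashion, which is the content of both assertions, and then to extract the recession behaviour. First I would record the pointwise duality: since $G(x,\cdot)$ is convex, finite, and Lipschitz with constant $\ell$ (as in Remark \ref{rmk: Lipschitz function}), for each fixed $x$ we have $G(x,M)=\sup\{A\cdot M+c \ :\ (A,c)\in \mathcal{S}_x\}$, where $\mathcal{S}_x:=\{(A,c)\in M^{n\times n}_{sym}\times\R:\ A\cdot N+c\le G(x,N)\ \forall N\}$ is the (closed, convex) set of subgradient data. The Lipschitz bound forces $|A|\le \ell$ for every $(A,c)\in\mathcal{S}_x$, and then $c=\inf_N(G(x,N)-A\cdot N)\in[-\,?\,,\ell|0|]=(-\infty,0]$ is also bounded below by $-2\ell\cdot(\text{something})$; in any case $\mathcal{S}_x$ is contained in a fixed compact set $K\subset M^{n\times n}_{sym}\times\R$ independent of $x$. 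Since $G$ is lower semicontinuous in $(x,M)$, the set-valued map $x\mapsto \mathcal{S}_x$ is upper semicontinuous with compact graph; in particular it admits a countable dense (in the graph) family of \emph{continuous} selections. Concretely I would take a countable dense set $\{(M_i,t_i)\}_{i\in\N}$ in $\Omega\times M^{n\times n}_{sym}$ and, for each $i$, use a measurable/continuous selection theorem (Michael's selection theorem, or a direct mollification argument using the lower semicontinuity and convexity) to pick $a_i\in C(\Omega;M^{n\times n}_{sym})$ and $b_i\in C(\Omega;\R)$ with $(a_i(x),b_i(x))\in\mathcal{S}_x$ for all $x$ and $a_i(t_i)\cdot M_i+b_i(t_i)$ within $1/i$ of $G(t_i,M_i)$. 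Relabelling the family as $\{(a_j,b_j)\}_{j\in\N}$, the inequality $\sup_j\{a_j(x)\cdot M+b_j(x)\}\le G(x,M)$ is automatic, and the reverse inequality follows from density together with the (uniform, by the $\ell$-Lipschitz bound) continuity of both sides in $(x,M)$.

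For the second formula, fix $L$ and write $g_x(t):=\dfrac{G(x,L+tM)-G(x,L)}{t}$. As in Proposition \ref{propo: justification of limit}, convexity of $G(x,\cdot)$ makes $t\mapsto g_x(t)$ nondecreasing on $(0,\infty)$ and the Lipschitz bound makes it bounded, so the limit $G_\infty(x,M):=\lim_{t\to\infty}g_x(t)$ exists; this is the object already named $F_\infty$ in the special case $G=F(\cdot,\cdot,0)$, and the decomposition will show it does not depend on $L$. From $a_j(x)\cdot(L+tM)+b_j(x)\le G(x,L+tM)$ and $a_j(x)\cdot L+b_j(x)\le G(x,L)$ one gets, for every $j$ and every $t>0$, $a_j(x)\cdot M\le g_x(t)$, hence $\sup_j a_j(x)\cdot M\le G_\infty(x,M)$. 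For the reverse inequality I would exploit that the supremum defining $G(x,\cdot)$ is attained in the limit along a sequence $(a_{j_k}(x),b_{j_k}(x))$ realising $G(x,L+t_kM)$ for $t_k\to\infty$: writing $G(x,L+t_kM)\le a_{j_k}(x)\cdot(L+t_kM)+b_{j_k}(x)+\delta_k$ with $\delta_k\to0$, subtracting $G(x,L)\ge a_{j_k}(x)\cdot L+b_{j_k}(x)$, and dividing by $t_k$, I obtain $g_x(t_k)\le a_{j_k}(x)\cdot M+\delta_k/t_k\le \sup_j a_j(x)\cdot M+\delta_k/t_k$. Letting $k\to\infty$ yields $G_\infty(x,M)\le\sup_j a_j(x)\cdot M$, which closes the argument.

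The main obstacle is the \emph{continuity} of the selections $a_j,b_j$: pointwise duality is elementary, but turning the $x$-dependent family of subgradients into genuinely continuous functions on $\Omega$ requires care. I expect to handle this either by invoking Michael's continuous-selection theorem for the lower-semicontinuous-graph, convex-valued map $x\mapsto\mathcal{S}_x$ (after checking $\mathcal{S}_x$ is nonempty, closed, convex, and uniformly bounded), or, more hands-on, by a mollification: set $G^\rho(x,M):=(G(\cdot,M)*\varphi_\rho)(x)$, note $G^\rho$ is continuous in $x$, still convex in $M$, still $\ell$-Lipschitz, and $G^\rho\downarrow G$ as $\rho\downarrow0$ by lower semicontinuity; the affine minorants of the $G^\rho$ are continuous in $x$, and a diagonal choice over a dense set of slopes and a vanishing sequence $\rho\to0$ produces the required countable family $\{(a_j,b_j)\}$. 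Everything else — the Lipschitz bound giving uniform equicontinuity, the monotonicity of difference quotients, and the passage to the recession function — is routine and parallels Proposition \ref{propo: justification of limit}.
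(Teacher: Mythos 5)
The genuinely hard part of this proposition is exactly the step your plan leaves unproved: producing \emph{continuous} selections $a_j,b_j$ when $G$ is only jointly lower semicontinuous in $(x,M)$. Both devices you propose for it fail as stated. You invoke a continuous-selection theorem for $x\mapsto\mathcal{S}_x$ after asserting that this map is \emph{upper} semicontinuous with compact graph; but Michael's theorem requires \emph{lower} hemicontinuity, and upper semicontinuous maps with compact convex values admit no continuous selection in general (the map equal to $\{-1\}$ for $x<0$, $\{+1\}$ for $x>0$ and $[-1,1]$ at $x=0$ is a standard counterexample). Moreover $\mathcal{S}_x$ is not contained in a fixed compact set: the Lipschitz bound controls only the slopes $|A|\leq\ell$, not the intercepts. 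The lower hemicontinuity of $x\mapsto\mathcal{S}_x$ is precisely what must be extracted from the joint lower semicontinuity of $G$, and you never address it. The mollification fallback is also flawed: for a merely l.s.c. integrand, $G^\rho=G(\cdot,M)\ast\varphi_\rho$ is neither $\leq G$ nor monotone in $\rho$, so affine minorants of $G^\rho$ need not minorize $G$ and the resulting supremum can exceed $G$. The classical device, which is what lies behind the results of Buttazzo's book that the paper quotes for this proposition (Lemma 2.2.3, Remark 2.2.6, Lemma 3.1.3), is the inf-convolution $G_k(x,M)=\inf_y\{G(y,M)+k|x-y|\}$: it is Lipschitz in $x$, lies below $G$, and increases to $G$ as $k\to\infty$ exactly because $G$ is l.s.c.; your construction should be rebuilt on that. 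Finally, your closing remark that "the reverse inequality follows from density together with continuity of both sides in $(x,M)$" is not available: the $\ell$-Lipschitz bound gives continuity only in $M$, a countable supremum of continuous functions is merely l.s.c. in $x$, and $G$ is not u.s.c., so near-equality at a dense set of points $(t_i,M_i)$ does not transfer to every $(x,M)$.

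The recession part of your plan is essentially sound and parallels the paper's computation, up to one slip: from $a_j(x)\cdot(L+tM)+b_j(x)\leq G(x,L+tM)$ and $a_j(x)\cdot L+b_j(x)\leq G(x,L)$ you cannot deduce $a_j(x)\cdot M\leq g_x(t)$ for every finite $t$ (the second inequality points the wrong way); you only get $a_j(x)\cdot M\leq\bigl(G(x,L+tM)-a_j(x)\cdot L-b_j(x)\bigr)/t$, which nevertheless suffices after letting $t\to+\infty$ since the constant washes out -- this is how the paper argues in its sketch. Be aware, too, that the paper itself does not prove the $x$-dependent statement: it cites Buttazzo's lemmas and sketches only the $x$-independent case (dense points $P_j$, $a_j=\nabla_M G(P_j)$, $b_j=G(P_j)-\nabla_M G(P_j)\cdot P_j$). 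A self-contained proof such as you attempt is a reasonable goal, but the selection/approximation-in-$x$ step is the real content and is currently missing.
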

The proof of the previous Proposition comes as a consequence of \cite[Lemma 2.2.3, Remark 2.2.6, Lemma 3.1.3]{buttazzo1989semicontinuity}.
\begin{remark}
Let us briefly sketch the proof of Proposition \ref{prop: convex decomposition} in the easy case where $G(x,M)=G(M)$ is convex just to give an idea to the reader about why such decomposition hold true (the proof can be also found in \cite{AFP}.  Chosen $\{P_j\}_{j\in \N} \subset M_{sym}^{n\times n}$ a dense set it is enough to define the values
	\[
	a_j:=\nabla_{M} G(P_j), \  \ \ \ b_j:=-  \nabla_{M}  G(P_j) \cdot P_j + G(P_j).
	\]
Notice that
\begin{align}
	G(P_{j}) = \nabla_M G(P_j) \cdot P_j  - \nabla_M G( P_j) \cdot P_j + G(P_j)=a_j \cdot P_j  + b_j.\label{gigi}
	\end{align}
Pick now any $M\in M_{sym}^{n\times n}$ and let $\{P_{j_k}\}_{k\in \N} \subset\{P_j\}_{j\in \N}$ be a subsequence such that $P_{j_k}\rightarrow M$. Since $G(x,\cdot)$ is convex and thanks to \eqref{gigi} we get
$G( M) \geq  a_j \cdot M  + b_j$, 
and hence
	\[
	G(M) \geq \sup_{j\in \N} \{  a_j \cdot M + b_j \}.
	\]
On the other hand, by continuity, $G( M)=\lim_k G( P_{j_k})$ and thus for any $\delta>0$ there exists $K_0$ such that
	\[
	G( M)\leq G (P_{j_k})+\delta \ \ \ \ \text{for all $k\geq K_0$}.
	\]
Thus
	\begin{align}
	G( M)\leq &G(P_{j_k})+\delta = a_{j_k} \cdot P_{j_k}  + b_{j_k}+\delta\nonumber\\
	=& a_{j_k}\cdot M  + b_{j_k}(x)+\delta+ a_{j_k}\cdot (P_{j_k}-M) \nonumber\\
		\leq& \sup_{j\in \N} \{  a_{j} \cdot M + b_{j} \}+\delta+  a_{j_k} \cdot (P_{j_k}-M).\rangle\label{proietti}	
		\end{align}
Function $G$ being convex it is also Liptshitz on  every bounded set in $M_{sym}^{n\times n}$ and in particular $a_{j_k}=\nabla_M G(P_{j_k})$ is bounded for $P_{j_k}$ close enough to $M$. Thus $a_{j_k} \cdot (P_{j_k}-M) \rightarrow 0$ and in particular, by taking the limit in $k$ and then in $\delta$ in \eqref{proietti}, we get
	\[
	G(M) \leq \sup_{j\in \N} \{  a_j \cdot M + b_j \}.
	\]
For the recession function instead we see that,  because of the convexity, for any $L\in M_{sym}^{n\times n}$ the quantity
	\[
	\frac{G(L+tM) - G(L)}{t}
	\]
is increasing in $t$ and thus
	\begin{align*}
	\lim_{k \rightarrow +\infty} \frac{G(L+kM) - G(L)}{k} &=\sup_{k\in \N} \frac{G(L+kM)-G(L)}{k}.
%	&=\sup_{k\in \N} \sup_{j\in \N} \left\{\frac{b_j-G(L)}{k} +  a_j\cdot M\right\}.
	\end{align*}
On the one hand, for all $j\in \N$, we get
	\[
\frac{G(L+kM)-G(L)}{k} \geq \frac{a_j\cdot L+b_j -G(L)}{k} + a_j \cdot M ,
	\]
which implies	
	\[
\lim_{k\rightarrow +\infty} \frac{G(L+kM) - G(L)}{k} \geq \sup_{j\in \N} \left\{a_j\cdot M \right\}.
	\]
On the other, for any $k\in \N$,  it holds
	\begin{align*}
	\frac{G(L+kM) - G(L)}{k}&=\sup_{j\in \N} \left\{\frac{b_j + a_j \cdot L -G(L)}{k} + a_j\cdot M\right\}\\
	&\leq \sup_{j\in \N} \left\{a_j \cdot M  \right\},
	\end{align*}
since $b_j+a_j\cdot L \leq G(L)$. In particular the equality is attained.
\end{remark}
\begin{remark}
In the light of Proposition \ref{prop: convex decomposition} it is clear that the recession function is independent of the starting point $L$.
\end{remark}
We recall the following technical Lemma from \cite[Lemma 2.35]{AFP}.
\begin{lemma}\label{lem:sup}
Let $\nu$ be any positive Radon measure and let $\varphi_i:\Omega \rightarrow \R^+$ with $i\in \N$ be a family of Borel functions. Then
	\[
	\sup_{i\in \N} \left\{\sum_{i\in I } \int_{A_i} \varphi_i(x) \d \nu (x) \right\} = \int_{\Omega}	\sup_{i\in \N} \left\{\varphi_i(x) \d \nu (x) \right\}  \d \nu(x).
	\]
where the supremum ranges over all finite families $\{A_i\}_{i\in I}$ of pairwise disjoint open set compactly contained in $\Omega$.
\end{lemma}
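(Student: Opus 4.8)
The plan is to establish the two inequalities separately, writing $g:=\sup_{i\in\N}\varphi_i$ for the Borel upper envelope, so that the asserted identity reads $\sup\bigl\{\sum_{i\in I}\int_{A_i}\varphi_i\,\d\nu\bigr\}=\int_\Om g\,\d\nu$, the supremum being over all finite families $\{A_i\}_{i\in I}$ of pairwise disjoint open sets $A_i\cc\Om$. The bound "$\le$" is immediate: for any such family, since $\varphi_i\le g$ and the $A_i$ are disjoint, $\sum_{i\in I}\int_{A_i}\varphi_i\,\d\nu\le\sum_{i\in I}\int_{A_i}g\,\d\nu=\int_{\bigcup_i A_i}g\,\d\nu\le\int_\Om g\,\d\nu$, and one takes the supremum. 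The content is the reverse inequality, for which I would first reduce to a tractable case. Setting $g_N:=\max_{1\le i\le N}\varphi_i$, truncating at level $M$, and restricting $\nu$ to an exhausting sequence $\Om'\cc\Om$ (of finite mass, since $\nu$ is Radon), the functions $g_N\wedge M$ increase to $g$ and $\nu\llcorner\Om'\nearrow\nu$, so by monotone convergence it suffices to dominate each $\int_{\Om'}(g_N\wedge M)\,\d\nu$ by the left-hand side. Replacing $\varphi_i$ by $\varphi_i\wedge M$ (which only lowers each $\int_{A_i}\varphi_i\,\d\nu$ and keeps $\max_{i\le N}(\varphi_i\wedge M)=g_N\wedge M$) and $\nu$ by $\nu\llcorner\Om'$, one may assume there are finitely many bounded Borel functions $\varphi_1,\dots,\varphi_N$ and that $\nu$ is finite.

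Next I would build the natural argmax partition: put $E_1:=\{\varphi_1=g_N\}$ and, inductively, $E_i:=\{\varphi_i=g_N\}\setminus\bigcup_{j<i}E_j$. These are pairwise disjoint Borel sets covering $\Om'$ with $\varphi_i\equiv g_N$ on $E_i$, so $\int_{\Om'}g_N\,\d\nu=\sum_{i=1}^N\int_{E_i}\varphi_i\,\d\nu$. Fix $\delta>0$. By inner regularity of the Radon measure $\nu$ choose compact sets $K_i\subseteq E_i$ with $\nu(E_i\setminus K_i)<\delta/(NM)$; the $K_i$ are then pairwise disjoint compact subsets of the open set $\Om$, so by normality of the metric space they can be enclosed in pairwise disjoint open sets, and after a harmless shrinking — using $\dist(K_i,\partial\Om)>0$ — one obtains pairwise disjoint open sets $A_i\cc\Om$ with $K_i\subseteq A_i$. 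Since $\varphi_i\ge0$ one gets $\int_{A_i}\varphi_i\,\d\nu\ge\int_{K_i}\varphi_i\,\d\nu\ge\int_{E_i}\varphi_i\,\d\nu-M\,\nu(E_i\setminus K_i)$, hence summing, $\sum_i\int_{A_i}\varphi_i\,\d\nu\ge\int_{\Om'}g_N\,\d\nu-\delta$. Letting $\delta\to0$, then $N,M\to\infty$ and $\Om'\uparrow\Om$, yields "$\ge$".

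The one genuinely delicate point is the \textbf{regularization step}: passing from the measurable argmax pieces $E_i$ to pairwise disjoint open sets $A_i$ that are moreover compactly contained in $\Om$, all without losing mass. This is precisely where the hypotheses conspire: "$\nu$ Radon" gives the inner approximation by compacts; the $A_i$ being required only to be open and pairwise disjoint (never $A_i\subseteq E_i$, which would be impossible in general) is what makes the separation of the disjoint compacts $K_i$ by disjoint neighborhoods sufficient; and the compact-containment requirement is absorbed by the exhaustion $\Om'\cc\Om$ together with $\dist(K_i,\partial\Om)>0$. Everything else — the "$\le$" bound, the three monotone-convergence reductions, and the argmax partition — is routine, so in a full write-up I would dispatch those quickly and concentrate the detail on this separation argument.
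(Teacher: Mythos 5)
Your proposal is correct: the trivial ``$\leq$'' bound, the monotone-convergence reductions, the argmax partition $E_i$, inner approximation by compacts, and separation of the disjoint compacts by disjoint open sets $A_i\cc\Om$ together give exactly the claimed identity. Note that the paper itself offers no proof of this lemma --- it is quoted from \cite[Lemma 2.35]{AFP} --- and your argument is essentially the standard proof found there, so there is nothing to add.
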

We now state and prove the semicontinuity result. For the sake of completeness we mention that this result comes also as a consequence of \cite[Theorem 3.4.1, Corollary 3.4.2 ]{buttazzo1989semicontinuity}.
\begin{proposition}\label{prop:limit of convex}
Let $G:\Omega\times M_{sym}^{n\times n} \rightarrow \R_+$ be a positive function such that $G(x,M)$ is lower semicontinuous in $(x,M)$, $G(x,\cdot)$ is convex for all $x\in \Omega$ and $|G(x,M)|<\ell |M|$ for some $\ell\in \R$ and for all $(x,M)$. Then, for any $u_{\e}\in H^1(\Omega;\R^n)$ such that $u_{\e}\rightarrow u$ in $L^1$ with $u\in SBD^2(\Omega)$ it holds
	\begin{align*}
	\liminf_{\e\rightarrow 0 } \int_{A}G(x,e(u_{\e})) \d x \geq  \int_{A} G(x,e(u))\d x + \int_{A\cap J_u}  G_{\infty}(z,[u]\odot \nu) \d \H^{n-1}(z) 
	\end{align*}
for all open set $A\subset \Omega$.
\end{proposition}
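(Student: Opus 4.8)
The plan is to reduce the statement to the scalar-type blow-up argument of Fonseca--M\"uller by means of the decomposition Proposition \ref{prop: convex decomposition}, which writes $G(x,M)=\sup_{j}\{a_j(x)\cdot M+b_j(x)\}$ with continuous $a_j,b_j$ and $G_\infty(x,M)=\sup_j\{a_j(x)\cdot M\}$. First I would fix an open set $A\subset\Omega$ and, by passing to a subsequence, assume the $\liminf$ is a genuine limit. Introduce the sequence of nonnegative Radon measures $\mu_\e:=G(\cdot,e(u_\e))\L^n\llcorner A$; since $|G(x,M)|\le\ell|M|$ and $\sup_\e\int_A|e(u_\e)|<+\infty$ is needed, I would either assume it (the statement is used in this excerpt only for sequences satisfying \eqref{eqn:uniform bnd on L1 of strain}) or note that otherwise the liminf is $+\infty$ and there is nothing to prove. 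Up to a further subsequence, $\mu_\e\weakstar\mu$ for some finite Radon measure $\mu$ on $A$. It then suffices to prove the two density estimates
\[
\frac{d\mu}{d\L^n}(x_0)\ge G(x_0,e(u)(x_0))\quad\text{for $\L^n$-a.e. }x_0\in A,
\]
\[
\frac{d\mu}{d\H^{n-1}\llcorner J_u}(x_0)\ge G_\infty(x_0,[u](x_0)\odot\nu(x_0))\quad\text{for $\H^{n-1}$-a.e. }x_0\in J_u,
\]
because $\mu\ge\big(\tfrac{d\mu}{d\L^n}\big)\L^n+\big(\tfrac{d\mu}{d\H^{n-1}\llcorner J_u}\big)\H^{n-1}\llcorner J_u$ by the Besicovitch differentiation theorem and mutual singularity of the two parts, and then $\liminf_\e\int_A G(\cdot,e(u_\e))=\mu(A)\ge\int_A G(x,e(u))\,dx+\int_{A\cap J_u}G_\infty$.

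For the bulk density, at a point $x_0$ where $e(u)$ has an approximate limit and which is a Lebesgue point of all $a_j,b_j$, I would blow up on balls $B_r(x_0)$: for $\L^1$-a.e. $r$ small, $\mu(B_r(x_0))=\lim_\e\int_{B_r(x_0)}G(x,e(u_\e))\,dx\ge \lim_\e\int_{B_r(x_0)}\big(a_j(x)\cdot e(u_\e)+b_j(x)\big)\,dx$ for each fixed $j$; since $e(u_\e)\rightharpoonup e(u)$ weakly on $B_r$ (here I use that the bulk parts of $Eu_\e$ converge weakly, as established for the relevant sequences via the SBD-compactness in Proposition \ref{prop: information on convergence}; for the abstract statement one argues with $Eu_\e\weakstar Eu$ and lower semicontinuity of $\int a_j\cdot dEu_\e$), passing to the limit and using $\L^n$-a.e. continuity gives $\mu(B_r(x_0))/\omega_nr^n\to$ a quantity $\ge a_j(x_0)\cdot e(u)(x_0)+b_j(x_0)$; taking the supremum over $j$ via Lemma \ref{lem:sup} (localizing the affine pieces on disjoint subballs) yields $\frac{d\mu}{d\L^n}(x_0)\ge G(x_0,e(u)(x_0))$.

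For the jump density, at $\H^{n-1}$-a.e. $x_0\in J_u$ with approximate normal $\nu_0$ and traces $u^\pm(x_0)$, I would blow up at scale $r$ in the rescaled variables: on the oriented half-balls, $u$ looks like the pure-jump function $u_0(y)=u^+(x_0)$ if $y\cdot\nu_0>0$, $u^-(x_0)$ otherwise, whose symmetrized distributional gradient is $[u](x_0)\odot\nu_0\,\H^{n-1}\llcorner\{y\cdot\nu_0=0\}$. Using $G(x,M)\ge a_j(x)\cdot M$ (drop the now-irrelevant $b_j$, which contributes $O(r^n)=o(r^{n-1})$) and weak-$*$ convergence of the symmetrized gradient measures, one gets $\mu(B_r(x_0))/r^{n-1}\gtrsim \big(a_j(x_0)\cdot([u](x_0)\odot\nu_0)\big)\H^{n-1}(\{y\cdot\nu_0=0\}\cap B_1)$ in the limit $r\to0$, and again Lemma \ref{lem:sup} upgrades the supremum over $j$ to $G_\infty(x_0,[u](x_0)\odot\nu_0)$, because $G_\infty=\sup_j a_j\cdot(\cdot)$ and $G_\infty$ is $1$-homogeneous (Proposition \ref{propo: justification of limit}). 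The main obstacle, and the place where care is needed, is precisely the jump-density lower bound: one must ensure the weak convergence $e(u_\e)\L^n\weakstar\cdots$ interacts correctly with the concentration of $Eu$ on $J_u$ --- i.e.\ that the affine minorants $a_j\cdot e(u_\e)$ see the full jump mass and not a deflated version --- which is where the Fonseca--M\"uller blow-up (applied here to the linear integrands $a_j\cdot M$, for which lower semicontinuity along $Eu_\e\weakstar Eu$ is classical) does the work; the passage $b_j$-terms being negligible and the interchange of $\sup_j$ with the limit (Lemma \ref{lem:sup}) are the routine remaining points.
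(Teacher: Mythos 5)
Your argument is correct in substance, but it follows a genuinely different route from the paper. The paper's proof never extracts a limit measure nor computes densities: after the same decomposition $G(x,M)=\sup_j\{a_j(x)\cdot M+b_j(x)\}$, $G_\infty=\sup_j\{a_j(x)\cdot M\}$ from Proposition \ref{prop: convex decomposition}, it tests the affine minorants against $\varphi_j\in C_c(A_j)$ on finitely many pairwise disjoint open subsets, passes to the limit using $e(u_\e)\L^n\rightharpoonup^* Eu$ and the decomposition $Eu=e(u)\L^n+[u]\odot\nu\,\H^{n-1}\llcorner J_u$, and then assembles the supremum with Lemma \ref{lem:sup} applied to the measure $\L^n+\H^{n-1}\llcorner J_u$ (using positivity of $G$ to replace the integrands by their positive parts). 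Your Fonseca--M\"uller scheme (limit measure $\mu$, Besicovitch differentiation, separate lower bounds for $d\mu/d\L^n$ and $d\mu/d(\H^{n-1}\llcorner J_u)$) reaches the same conclusion and is the more flexible machinery, at the price of the extra standard ingredients you correctly flag (vanishing $(n-1)$-density of the absolutely continuous part at $\H^{n-1}$-a.e.\ point of $J_u$, choice of radii not charging $\partial B_r$); the paper's route is shorter precisely because the minorants are affine, so no blow-up is needed. Two small remarks. First, in your framework Lemma \ref{lem:sup} is not the right tool and is not needed: since each density bound holds for every fixed $j$ at a.e.\ point, you simply take the countable pointwise supremum, and Proposition \ref{prop: convex decomposition} guarantees that this supremum equals $G(x_0,e(u)(x_0))$, respectively $G_\infty(x_0,[u]\odot\nu)$, pointwise. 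Second, your fallback ``otherwise the liminf is $+\infty$'' is not justified under the stated hypotheses, because $0\le G(x,M)\le\ell|M|$ gives no coercivity from below, so boundedness of $\int_A G(x,e(u_\e))$ does not force $\sup_\e\int_A|e(u_\e)|<+\infty$; however, the paper's own proof has the same implicit need (weak-$*$ convergence $e(u_\e)\L^n\rightharpoonup^* Eu$ against merely continuous test fields requires a mass bound), and in the only place the proposition is invoked the bound \eqref{eqn:uniform bnd on L1 of strain} is available, so your first option (assume it) is the right one.
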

\begin{proof}
We first notice that, since $u_{\e}\rightarrow u$ in $L^1$ and $u_{\e},u \in SBD^2(\Omega)$ we have 
	\[
	e(u_{\e})\L^n \rightharpoonup^* Eu.
	\]
Fix $A\subset \Omega$. We can apply Proposition \ref{prop: convex decomposition} to find two families of continuous functions $a_j(x):\Omega\rightarrow M_{sym}^{n\times n}$, $b_j(x):\Omega\rightarrow \R$ such that
	\[
	G(x,M)=\sup_{j\in \N}\{ a_j(x)\cdot M +b_j(x)\}, \ \ \ \ G_{\infty}(x,M)=\sup_{j\in \N}\{a_j(x) \cdot M\}.
	\]
Let $A_0,\ldots A_m$ be pairwise disjoint open subset of $A$ and $\varphi_j\in C_c(A_j)$ with $0\leq \varphi_j\leq 1$ for all $j=0,\ldots, m$. Then
	\begin{align*}
	\int_{A} G(x,e(u_{\e}))\d x &\geq \sum_{j=0}^m \int_{A_j} \varphi_j G(x,e(u_{\e}))\d x\\
	&\geq \sum_{j=0}^m \int_{A_j} \varphi_j a_j(x)\cdot e(u_{\e})  \d x + \int_{A_j} \varphi_j b_j(x) \d x,
	\end{align*}
which, by passing to the limit in $\e$ and by exploiting the fact that $a_j \varphi_j\in C_c(A_j;M_{sym}^{n\times n} )$ leads to
 \begin{align*}
	\liminf_{\e \rightarrow 0} \int_{A} &G(x,e(u_{\e})\d x \geq \sum_{j=0}^m \int_{A_j} \varphi_j a_j(x)\cdot d Eu(x) + \int_{A_j} \varphi_j b_j(x) \d x\\
	&= \sum_{j=0}^m \int_{A_j} \varphi_j \left[a_j(x)\cdot \frac{d Eu}{d \L^n} (x)+b_j(x) \right] \d x+ \int_{A_j} a_j(x)\cdot d Eu^s(x) \\
	&= \sum_{j=0}^m \int_{A_j} \varphi_j \left[a_j(x)\cdot e(u)+b_j(x) \right] \d x+ \int_{A_j\cap J_u} \varphi_j a_j(x)\cdot ([u]\odot \nu ) \d \H^{n-1}(x).
	\end{align*}
We now want to apply Lemma \ref{lem:sup} and thus we set $\nu=\L^n + \H^{n-1}\llcorner J_u$ and we define the functions
	\begin{equation}
	\begin{split}
	\phi_j(x)&:=\left\{
	\begin{array}{ll}
	a_j(x) \cdot e(u) +b_j(x) & \text{for $x\in A \setminus J_u$}\\
	a_j(x) \cdot ([u]\odot \nu) & \text{for $x\in J_u \cap A$},
	\end{array}
	\right.,\\
	& \text{}\\
	\phi(x)&:=\left\{
	\begin{array}{ll}
	G(x,e(u)) & \text{for $x\in A \setminus J_u$}\\
	G_{\infty}(x,[u]\odot \nu)  & \text{for $x\in J_u\cap A$}.
	\end{array}
	\right. 
		\end{split}
	\end{equation}
Notice that, due to the mutual singularity of $\L^n$ and $\H^{n-1}\llcorner J_u$, we get
	\begin{align*}
	\sum_{j=0}^m \int_{A_j} \phi_j \varphi_j \d \nu \leq \liminf_{\e\rightarrow 0} \int_{A} G(x,e(u_{\e})) \d x.
	\end{align*}
By taking the supremum over $\varphi_j$ we get
	\begin{align*}
	\sum_{j=0}^m \int_{A_j} \phi_j^+ \d \nu \leq \liminf_{\e\rightarrow 0} \int_{A} G(x,e(u_{\e}) ) \d x.
	\end{align*}
Thanks to Proposition \ref{prop: convex decomposition}, for any fixed $x\in A$ it holds
	\[
	\sup_{j\in \N} \{\phi_j(x)\}=	\sup_{j\in \N} \{\phi_j^+(x)\}=\phi(x),
	\]
since $\phi\geq0$ for all $x\in \Omega$. Now, by taking the supremum among all the finite families of pairwise disjoint open subsets of $A$ and by applying Lemma \ref{lem:sup}, we get 
	\begin{align*}
	\liminf_{\e\rightarrow 0} \int_{A} G(x,e(u_{\e})) \d x&\geq \sup_{i\in I} \left\{\sum_{i\in I} \int_{A_i} \phi_j^+\d \nu(x)\right\}=\int_A \sup_{j\in N} \{\phi_j^+(x)\}\d \nu(x)\\
&=\int_{A} G(x,e(u))\d x + \int_{A\cap J_u} G_{\infty}(z, [u]\odot \nu) \d \H^{n-1}(z).
	\end{align*}
\end{proof}
\subsection*{Acknowledgements}
The authors were supported by the FCT Starting Grant `` Mathematical theory of dislocations: geometry, analysis, and modelling'' (IF/00734/2013).

\bibliography{references}
\bibliographystyle{plain}

\end{document}